\documentclass[1p]{elsarticle}

\usepackage{lineno,hyperref}
\modulolinenumbers[5]

\usepackage{graphicx}
\usepackage{hyperref}
\usepackage{amssymb,amscd,mathtools,amsmath,amsthm}
\usepackage{mathptmx} 
\usepackage{float,wrapfig,graphicx,epsfig,esint,mathrsfs}
\usepackage{caption}
\usepackage[caption=false]{subfig}
\usepackage{booktabs}
\usepackage{multirow}
\usepackage[]{algorithm2e}

\newtheorem{proposition}{Proposition}[section]

\theoremstyle{definition}
\newtheorem{definition}{Definition}[section]










\bibliographystyle{elsarticle-num}

\begin{document}

\begin{frontmatter}

\title{A generalized optimal fourth-order finite difference scheme for a 2D Helmholtz equation with the perfectly matched layer boundary condition}

\author[mymainaddress]{Hatef Dastour\corref{mycorrespondingauthor}}
\cortext[mycorrespondingauthor]{Corresponding author}
\ead{hatef.dastour@ucalgary.ca}
\author[mymainaddress]{Wenyuan Liao}
\address[mymainaddress]{Department of Mathematics \& Statistics, University of Calgary, AB, T2N 1N4, Canada}

\begin{abstract}
A crucial part of successful wave propagation related inverse problems is an efficient and accurate numerical scheme for solving the seismic wave equations. In particular, the numerical solution to a multi-dimensional Helmholtz equation can be troublesome when the perfectly matched layer (PML) boundary condition is implemented. In this paper, we present a general approach for constructing fourth-order finite difference schemes for the Helmholtz equation with PML in the two-dimensional domain based on point-weighting strategy. Particularly, we develop two optimal fourth-order finite difference schemes, optimal point-weighting 25p and optimal point-weighting 17p. It is shown that the two schemes are consistent with the Helmholtz equation with PML. Moreover, an error analysis for the numerical approximation of the exact wavenumber is provided. Based on minimizing the numerical dispersion, we implement the refined choice strategy for selecting optimal parameters and present refined point-weighting 25p and refined point-weighting 17p finite difference schemes. Furthermore, three numerical examples are provided to illustrate the accuracy and effectiveness of the new methods in reducing numerical dispersion.
\end{abstract}

\begin{keyword}
Helmholtz equation \sep PML \sep Optimal finite difference scheme \sep numerical dispersion.
\end{keyword}

\end{frontmatter}


\section{Introduction}\label{S0}
In realistic heterogeneous mediums, an essential component for understanding complex wave phenomena is modeling seismic wave propagation. In particular, numerical solutions from finite-difference modeling are crucial since they can provide the complete wavefield response \cite{hustedt2004mixed}.
A key step in numerically solving the Helmholtz equation, which defines computational accuracy, is the pollution effect of high wavenumbers. In this phenomenon, the accuracy of the numerical results often deteriorates as the wavenumber $k$ increases \cite{wu2017dispersion,ihlenburg1995dispersion,ihlenburg1995finite}. The pollution effect of high wavenumbers is inevitable for two and three dimensional Helmholtz equations in practical applications \cite{wu2017dispersion,ihlenburg1995dispersion}.  Thus, solving the Helmholtz equation numerically with high wavenumbers is an important task in the field of computational mathematics \cite{jo1996optimal,chen2013optimal,shin1998frequency}.

Owing to the inevitable presence of the pollution effect of high wavenumbers, the approximated wavenumber is usually different from the exact wavenumber. This is known as the numerical dispersion \cite{chen2012dispersion}. Hence, minimizing the numerical dispersion can reduce the pollution effect as the numerical dispersion is closely related to the pollution effect \cite{jo1996optimal,chen2013optimal,shin1998frequency}.

Since the 1980s, finite difference frequency-domain (FDFD) modeling for the generation of synthetic seismograms and cross-hole tomography has been a popular field of research \cite{shin1998frequency}. The classical 5-point finite difference scheme was developed by Pratt and Worthington \cite{pratt1990inverse}. When the sampling intervals are large, the main disadvantage of this scheme is suffering from severe numerical dispersion errors as it requires ten gridpoints per wavelength. One can reduce the numerical dispersion by incorporating very small sampling intervals; however, this can lead to not only a significant increase in both storage requirements and CPU time but also linear systems with huge and ill-conditioned matrices. Solving the linear systems associated with these huge and ill-conditioned matrices, direct methods might fail to perform efficiently. Usually, iterative methods with pre-conditioners are considered to solve these linear systems \cite{de1990matrix,erlangga2006novel,van2007spectral,chen2013generalized}. An alternative approach is to avoid such linear systems is to use optimal finite difference methods.

Furthermore, artificial boundary conditions are implemented to truncate the infinite computing domain into a finite domain. Theoretically speaking, artificial boundary conditions absorb waves of any wavelength and any frequency without reflection \cite{collino1998optimizing}; nonetheless, boundary conditions on the artificial boundary are not available in general. An ideal artificial boundary condition should be computationally stable, not require extensive computational resources, and have an acceptable level of accuracy \cite{collino1998optimizing}. The perfectly matched layer (PML) absorbing boundary condition has been the most popular method, which was introduced by Bérenger in 1994 \cite{berenger1994perfectly}, and is used to eliminate artificial reflection near the boundary. PML technique introduces an artificial layer with an attenuation parameter around the interior area (the domain of interest). On the other hand, applying PML will modify the original Helmholtz equation and the resulting equation is more challenging to solve as existing numerical methods may fail to solve the modified Helmholtz equation effectively and accurately (for more details about PML see \cite{collino1998optimizing,berenger1994perfectly,turkel1998absorbing,singer2004perfectly}).

During the past few decades, there have been many researches on reducing the numerical dispersion of finite difference methods for solving the Helmholtz equation. The rotated 9-point finite difference method for the Helmholtz equation was developed by Jo et al. \cite{jo1996optimal} in 1996. Their method consists of linearly combining the two discretizations of the second derivative operator on the classical Cartesian coordinate system and the $45^\circ$ rotated system. In addition, the idea of the rotated 9-point scheme was extended by Shin and Sohn \cite{shin1998frequency} to the 25-point formula, and they obtained a group of optimal parameters by the singular-value decomposition method in 1998.
Furthermore, Chen et al. proved that the rotated 9-point is inconsistent with the Helmholtz equation in the presence of PML, and constructed another 9-point finite difference scheme for the Helmholtz equation which is consistent with PML \cite{chen2013optimal}. Based on minimizing the numerical dispersion, refined and global choice strategies were also proposed by the authors for choosing optimal parameters of the optimal 9-point scheme.
Moreover, a generalized optimal 9-point scheme for frequency-domain scalar wave equation was developed by Chen \cite{chen2013generalized} which is an extension of the rotated 9-point scheme for the case that different spacial increments along x-axis and z-axis are used.
Additionally, Cheng et al. \cite{cheng2017dispersion}, in 2017, presented a new dispersion minimizing finite difference method in which the combination weights are determined by minimizing the numerical dispersion with a flexible selection strategy.

Moreover, many authors have made a great deal of efforts to improve numerical accuracy by developing higher-order finite difference schemes.
Harari et al. \cite{harari1995accurate} presented various fourth-order methods for time-harmonic wave propagation which depends on the angle of wave propagation. Furthermore, Singer et al. \cite{singer1998high} developed and analyzed a fourth-order compact finite difference scheme which depends on uniform grids for the two dimensional Helmholtz equation with constant wavenumbers and is less sensitive to the direction of the propagation.
Moreover, Wu  \cite{wu2017dispersion} proposed an optimal compact finite difference scheme whose parameters are chosen based on minimizing the numerical dispersion. Additionally, a fourth-order accurate finite difference scheme for the variable coefficient Helmholtz equation was developed by Britt et al. \cite{britt2011numerical} which reduces phase error compared with a second order. Sutmann \cite{sutmann2007compact} derived sixth-order compact finite difference schemes for the 2D and 3D Helmholtz equation with constant coefficients. Sixth-order finite difference schemes for the 2D and 3D Helmholtz equation with variable coefficients have been the subject of a number of researchers and the interested readers are referred to \cite{wu2018optimal,turkel2013compact}.

In addition, many of these higher-order schemes, in particular, compact finite difference schemes, require the source term to be smooth enough to obtain higher-order accuracy, and this is not always the case in many practical problems; however, non-compact finite difference schemes do not need this requirement. Dastour et al. \cite{dastour2019} proposed two non-compact optimal finite difference schemes, optimal 25-point and optimal 17-point finite difference schemes, for the Helmholtz equation with PML. They demonstrated that the 17-point finite difference method is inconsistent with the Helmholtz equation with PML and is impractical when different spacial increment along x-axis and z-axis are used.

In \cite{dastour2019}, the authors investigated extensions of the rotated 9-point finite difference scheme for the Helmholtz equation with PML \cite{jo1996optimal} and the 9-point finite difference scheme for the Helmholtz equation with PML \cite{chen2013optimal} to 17-point and 25-point finite difference schemes for the Helmholtz equation with PML, respectively. It was shown that the presented finite difference schemes are fourth-order; however, the 17-point finite difference scheme is inconsistent in the presence of PML while the 25-point finite difference scheme is pointwise consistent \cite{dastour2019}. In addition, the 17-point finite difference could not approximate the Laplacian operator of the wave equation with fourth-order accuracy when different spacial increments along x-axis and z-axis are used.
However, there are a number of advantages in using a fourth-order 17-point finite difference scheme for the Helmholtz equation with PML. For example, the generated banded matrix for this finite difference has less width than the 25-point finite difference scheme.
In this paper, to further reduce the numerical dispersion and fix the problems of the fourth-order 17-point finite difference scheme with the Helmholtz equation with PML, we present a general approach for constructing non-compact fourth-order finite difference schemes based on the point-weighting strategy \cite{cheng2017dispersion}.

The rest of this paper is organized as follows. In Section \ref{S1}, a general approach for constructing non-compact fourth-order finite difference methods based on the point-weighting strategy is presented. Particularly, we develop two new schemes, point-weighting 25p and 17p finite difference methods. Furthermore, we prove that the new schemes are consistent with the Helmholtz equation and fourth-order when PML is applied and different spacial increments along x-axis and z-axis are used. In Section \ref{S2}, we analyze the error between the numerical wavenumbers and the exact wavenumber and introduce refined point-weighting 25p and 17p schemes. In Section \ref{S3}, three examples are given to demonstrate the efficiency of the schemes. We demonstrate the importance of consistency of a scheme with the Helmholtz equation in the presence of PML and the necessity of implementing the refined point-weighting 17p instead of the refined 17p from \cite{dastour2019}. Finally, in Section \ref{S4} conclusions of this paper and possible future works are discussed.

\section{Fourth-order optimal finite difference schemes based on point-weighting strategy}\label{S1}
In this work, we consider the numerical solution of the 2D Helmholtz equation with PML given by \cite{singer2004perfectly,turkel1998absorbing}:
\begin{align}\label{S1.eq.02}
\frac{\partial}{\partial x}\left(A\left(x,z\right)\frac{\partial }{\partial x}p\left(x,z\right)\right)+
\frac{\partial}{\partial z}\left(B\left(x,z\right)\frac{\partial }{\partial z}p\left(x,z\right)\right)+
C\left(x,z\right) k^2\left(x,z\right) p\left(x,z\right)=\tilde{g}\left(x,z\right),
\end{align}
where $k = 2\pi f/v$ is the wavenumber in which $f$ and $v$ represent the frequency and the velocity respectively, $p$  is the pressure wavefield in the Fourier domain. Moreover, $A\left(x,z\right)=s_z/s_x,~B\left(x,z\right)=s_x/s_z,~\text{and }C\left(x,z\right)=s_xs_z$ in which $s_x = 1 -i\sigma_x/\omega$,\linebreak $s_z = 1 - i\sigma_z/\omega$ with $\omega = 2\pi f$ denotes the angular frequency, and
\begin{align*}
\tilde{g}=\begin{cases}0 , &\text{inside PML},\\ g, &\text{outside PML.}\end{cases}
\end{align*}
with $g$ is the Fourier transform of the source function.

Here, $\sigma_x$ and $\sigma_z$ are usually chosen as differentiable functions depending on the variable $x$ and $z$ only, respectively. For example, one may consider defining them as follows,
\begin{align}\label{S1.eq.03}
\sigma_x&=
\begin{cases}
2\pi a_0 f_M \left(\frac{l_x}{L_{PML}}\right)^2,& \text{inside PML},\\
0,& \text{outside PML},
\end{cases}
\\
\sigma_z&=
\begin{cases}
2\pi a_0 f_M \left(\frac{l_z}{L_{PML}}\right)^2,& \text{inside PML},\\
0,& \text{outside PML},
\end{cases}
\end{align}
where $f_M$ is the peak frequency of the source, $L_{PML}$ is the thickness of PML, $l_x$ and $l_z$ are the distance from the point $(x, z)$ inside PML to the interface between the interior region and PML region. Furthermore, $a_0$ is a constant, and we choose $a_0 = 1.79$ according to the paper \cite{zeng2001application}.

In the interior domain, $s_x = 1$ and $s_z = 1$ lead to $A = B = C = 1$. Thus,  equation \ref{S1.eq.02} can be regarded as a general form of the Helmholtz equation \eqref{S1.eq.01} with its corresponding PML,
\begin{align}\label{S1.eq.01}
\Delta p\left(x,z\right) +k^2\left(x,z\right) p\left(x,z\right)=g\left(x,z\right),
\end{align}
where $\Delta=\partial^2/\partial x^2 +\partial^2/\partial z^2$  is the Laplacian.

The number of wavelengths in a square domain of size $H$ equals $H /\lambda$, where $\lambda$ is the wavelength, and it is defined by $\lambda = v/ f$.
For the convenience of analysis, the two dimensional square computational domain is often normalized into $[0, ~1 ] \times [0, ~1 ]$, and then the dimensionless wavenumber is equal to $2\pi fH /v$  \cite{ihlenburg1995dispersion,cheng2017dispersion}. In the remainder of the paper, the wavenumber refers to dimensionless wavenumber, which is also denoted by $k$.

There are two common strategies for constructing optimal finite difference methods, derivative-weighting and point-weighting strategies. The main differences between a derivative-weighting scheme and a point-weighting scheme were discussed by Cheng et al. in \cite{cheng2017dispersion}.
The first difference is the way for discretizing the Laplacian operator with PML
$$\frac{\partial}{\partial x}\left(A\frac{\partial p}{\partial x}\right)+\frac{\partial}{\partial z}\left(B\frac{\partial p}{\partial z}\right).$$ Moreover, the second difference lies in their capability of reducing numerical dispersion \cite{chen2013optimal}. For more details about the derivative-weighting scheme, please refer to \cite{chen2013optimal,cheng2017dispersion}.

Dastour et al. in \cite{dastour2019} proposed two finite difference schemes, refined 17-point and refined 25-point schemes, which are derivative-weighting schemes. They proved that the 17-point scheme for the Helmholtz equation with PML is inconsistent. However, the 17-point scheme has some advantages over the 25-point scheme in terms of computational complexity and CPU time mainly as the generated banded matrix for the 17-point scheme has less width than the 25-point scheme. We will discuss the computational complexity of the new schemes in details in Example \ref{Ex.1}. For the two dimensional Helmholtz equation with PML \eqref{S1.eq.02}, we next construct a general fourth-order finite difference scheme bases on point-weighting strategy and will introduce two fourth-order optimal finite difference schemes.

Consider the network of grid points $(x_m ,z_n )=(x_0 + m\,\Delta x,~z_0 +n\,\Delta z)$ for $m,n=0,1,2,\ldots$. Let $p_{m,n} = \left. p \right|_{x=x_m ,z=z_n}$ and $k_{m,n} = \left.k \right|_{x=x_m ,z=z_n}$ represent the pressure of the wavefield and the wavenumber  at the location $(x_m ,z_n)$, respectively. Moreover,  The discretization of $A(x,z)$, $B(x,z)$ and $C(x,z)$ at point $(m,n )$ are denoted by $A_{m,n}$, $B_{m,n}$, and $C_{m,n}$, respectively. In addition, we have
\begin{align}\label{S1.eq.04}
&\begin{cases}
A_{m+\frac{j}{2},n+\frac{l}{2}}=A\left(x_m+\dfrac{j}{2}\Delta x,~z_n+\dfrac{l}{2}\Delta z\right),
\\ \vspace{-0.4cm}\\
B_{m+\frac{j}{2},n+\frac{l}{2}}=B\left(x_m+\dfrac{j}{2}\Delta x,~z_n+\dfrac{l}{2}\Delta z\right),
\\ \vspace{-0.4cm}\\
C_{m,n}=C\left(x_m,~z_n\right),
\end{cases}
&j,l\in \{-3,-1,0,1,3\}.
\end{align}

First off, we need to approximate $\dfrac{\partial}{\partial x}\left(A\dfrac{\partial p}{\partial x}\right)$ and $\dfrac{\partial}{\partial z}\left(B\dfrac{\partial p}{\partial z}\right)$ with fourth-order accuracy. It follows from equation \eqref{S1.eq.02} that
\begin{align}
\label{S1.eq.05}
\left. \frac{\partial}{\partial x}\left(A\frac{\partial p}{\partial x}\right)\right|_{x=x_m ,~z=z_n}&=
\left. \alpha_{1} \left(A\frac{\partial p}{\partial x}\right) \right|_{x=x_m-\frac{3\Delta x}{2} ,~z=z_n}+
\left. \alpha_{2} \left(A\frac{\partial p}{\partial x}\right) \right|_{x=x_m-\frac{\Delta x}{2} ,~z=z_n}
\notag \\ &
+\left. \alpha_{3} \left(A\frac{\partial p}{\partial x}\right) \right|_{x=x_m+\frac{\Delta x}{2} ,~z=z_n}+
\left. \alpha_{4} \left(A\frac{\partial p}{\partial x}\right) \right|_{x=x_m-\frac{3\Delta x}{2} ,~z=z_n},
\\
\label{S1.eq.06}
\left. \frac{\partial}{\partial z}\left(B\frac{\partial p}{\partial z}\right)\right|_{x=x_m ,~z=z_n}&=
\left. \beta_{1} \left(B\frac{\partial p}{\partial x}\right) \right|_{x=x_m ,~z=z_n-\frac{3\Delta z}{2}}+
\left. \beta_{2} \left(B\frac{\partial p}{\partial x}\right) \right|_{x=x_m ,~z=z_n-\frac{\Delta z}{2}}
\notag \\ &
+\left. \beta_{3} \left(B\frac{\partial p}{\partial x}\right) \right|_{x=x_m,~z=z_n+\frac{\Delta z}{2} }+
\left. \beta_{4} \left(B\frac{\partial p}{\partial x}\right) \right|_{x=x_m,~z=z_n-\frac{3\Delta z}{2} }.
\end{align}
We determine the coefficients $\alpha_{i}$ and $\beta_{i}$, for $i=1,\ldots,4$, in a way that \eqref{S1.eq.05} and \eqref{S1.eq.06} can approximate the Laplacian operator with PML with fourth-order accuracy. Applying the Taylor theorem on right-hand sides of \eqref{S1.eq.05} and \eqref{S1.eq.06}, and then solving the generated linear systems, we have
\begin{align*}
\begin{cases}
\alpha_{1}=\dfrac{1}{24}\dfrac{1}{\Delta x},~\alpha_{2}=-\dfrac{9}{8}\dfrac{1}{\Delta x},~\alpha_{3}=\dfrac{9}{8}\dfrac{1}{\Delta x}, \text{ and }\alpha_{4}=-\dfrac{1}{24}\dfrac{1}{\Delta x},
\\ \vspace{-0.4cm}\\
\beta_{1}=\dfrac{1}{24}\dfrac{1}{\Delta z},~\beta_{2}=-\dfrac{9}{8}\dfrac{1}{\Delta z},~\beta_{3}=\dfrac{9}{8}\dfrac{1}{\Delta z}, \text{ and }\beta_{4}=-\dfrac{1}{24}\dfrac{1}{\Delta z}.
\end{cases}
\end{align*}
Furthermore, we need to approximate the first derivatives of $p$ with respect to $x$ and $z$ at points $\left(x_m-\frac{3}{2}\Delta x,z_n\right),\ldots,\left(x_m,z_n+\frac{3}{2}\Delta z\right)$ with fourth-order accuracy. As a case in point, at point $\left(x_m-\frac{3}{2}\Delta x,z_n\right)$, we have
\begin{align}
\label{S1.eq.07}
\left. \frac{\partial p}{\partial x}\right|_{x=x_m-\frac{3h}{2} ,~z=z_n}& =
w_{1} p_{m-2,n}+w_{2} p_{m-1,n}+w_{3} p_{m,n}+w_{4} p_{m+1,n}+w_{5} p_{m+2,n}
\end{align}
with
\begin{align}\label{S1.eq.08}
w_{1}=-\frac{11}{12}\dfrac{1}{\Delta x},~w_{2}=\frac{17}{24}\dfrac{1}{\Delta x},~w_{3}=\frac{3}{8}\dfrac{1}{\Delta x},w_{4}=-\frac{5}{24}\dfrac{1}{\Delta x} \text{ and } w_{5}=\frac{1}{24}\dfrac{1}{\Delta x}.
\end{align}
Therefore,
\begin{align}\label{S1.eq.09}
\mathcal{L}_{x}p_{m ,n}&=
\frac{1}{\Delta x^2}\left[
-\frac{9}{8}A_{m-\frac{1}{2},n}\left(\frac{1}{24}p_{m-2,n}-\frac{9}{8}p_{m-1,n}+\frac{9}{8}p_{m,n}-\frac{1}{24}p_{m+1,n}\right)
\right. \notag \\ &
+\frac{1}{24}A_{m-\frac{3}{2},n}\left(-\frac{11}{12}p_{m-2,n}+\frac{17}{24}p_{m-1,n}+\frac{3}{8}p_{m,n}-\frac{5}{24}p_{m+1,n}+\frac{1}{24}p_{m+2,n}\right)
\notag \\ &
-\frac{1}{24}A_{m+\frac{3}{2},n}\left(-\frac{1}{24}p_{m-2,n}+\frac{5}{24}p_{m-1,n}-\frac{3}{8}p_{m,n}-\frac{17}{24}p_{m+1,n}+\frac{11}{12}p_{m+2,n}\right)
\notag \\ & \left.
+\frac{9}{8}A_{m+\frac{1}{2},n}\left(\frac{1}{24}p_{m-1,n}-\frac{9}{8}p_{m,n}+\frac{9}{8}p_{m+1,n}-\frac{1}{24}p_{m+2,n}\right)
\right],
\end{align}
\begin{align}\label{S1.eq.10}
\mathcal{L}_{z}p_{m ,n}&=
\frac{1}{\Delta z^2}\left[
-\frac{9}{8}B_{m,n-\frac{1}{2}}\left(\frac{1}{24}p_{m,n-2}-\frac{9}{8}p_{m,n-1}+\frac{9}{8}p_{m,n}-\frac{1}{24}p_{m,n+1}\right)
\right. \notag \\ &
+\frac{1}{24}B_{m,n-\frac{3}{2}}\left(-\frac{11}{12}p_{m,n-2}+\frac{17}{24}p_{m,n-1}+\frac{3}{8}p_{m,n}-\frac{5}{24}p_{m,n+1}+\frac{1}{24}p_{m,n+2}\right)
\notag \\ &
-\frac{1}{24}B_{m,n+\frac{3}{2}}\left(-\frac{1}{24}p_{m,n-2}+\frac{5}{24}p_{m,n-1}-\frac{3}{8}p_{m,n}-\frac{17}{24}p_{m,n+1}+\frac{11}{12}p_{m,n+2}\right)
\notag \\ & \left.
+\frac{9}{8}B_{m,n+\frac{1}{2}}\left(\frac{1}{24}p_{m,n-1}-\frac{9}{8}p_{m,n}+\frac{9}{8}p_{m,n+1}-\frac{1}{24}p_{m,n+2}\right)
\right].
\end{align}
These $\mathcal{L}_{x}p_{m ,n}$ and $\mathcal{L}_{z}p_{m ,n}$ provides a general approach for constructing fourth-order finite difference scheme based on  point-weighting strategy. We can replace $p_{m-2,n}$, $p_{m-1,n}$, \dots, $p_{m,n+1}$ and $p_{m,n+2}$ from equations \eqref{S1.eq.09} and \eqref{S1.eq.10} with new weighted arithmetic averages of other points in a way that $\mathcal{L}_{x}p_{m ,n}$ and $\mathcal{L}_{z}p_{m ,n}$ can maintain their fourth-order accuracy. For example, we can consider the following approximations of these points to create an optimal finite difference scheme,
\begin{align}\label{S1.eq.11}
\begin{cases}
p_{m-2,n}^{*} = a_{1} p_{m-2,n} + a_{2} \left(\dfrac{1}{72}p_{m-2,n-2} -\dfrac{1}{18}p_{m-2,n-1} -\dfrac{1}{18}p_{m-2,n+1} + \dfrac{1}{72}p_{m-2,n+2}\right),\\
p_{m-1,n}^{*} = a_{1} p_{m-1,n} + a_{2} \left(-\dfrac{2}{9}p_{m-1,n-2} + \dfrac{8}{9}p_{m-1,n-1} + \dfrac{8}{9}p_{m-1,n+1} -\dfrac{2}{9}p_{m-1,n+2}\right),\\
p_{m,n}^{*}   = a_{1} p_{m,n}   + a_{2} \left(\dfrac{5}{12}p_{m,n-2} -\dfrac{5}{3}p_{m,n-1} -\dfrac{5}{3}p_{m,n+1} + \dfrac{5}{12} p_{m,n+2}\right),\\
p_{m+1,n}^{*} = a_{1} p_{m+1,n} + a_{2} \left(-\dfrac{2}{9}p_{m+1,n-2}+ \dfrac{8}{9}p_{m+1,n-1} + \dfrac{8}{9}p_{m+1,n+1}-\dfrac{2}{9}p_{m+1,n+2}\right),\\
p_{m+2,n}^{*} = a_{1} p_{m+2,n} + a_{2} \left(\dfrac{1}{72}p_{m+2,n-2} -\dfrac{1}{18}p_{m+2,n-1} -\dfrac{1}{18}p_{m+2,n+1}+ \dfrac{1}{72}p_{m+2,n+2}\right),
\end{cases}
\end{align}
and
\begin{align}\label{S1.eq.12}
\begin{cases}
p_{m,n-2}^{**} = a_{1} p_{m,n-2} + a_{2} \left(\dfrac{1}{72}p_{m-2,n-2} -\dfrac{1}{18}p_{m-1,n-2} -\dfrac{1}{18}p_{m+1,n-2} + \dfrac{1}{72}p_{m+2,n-2}\right),\\
p_{m,n-1}^{**} = a_{1} p_{m,n-1} + a_{2} \left(-\dfrac{2}{9}p_{m-2,n-1} + \dfrac{8}{9}p_{m-1,n-1} + \dfrac{8}{9}p_{m+1,n-1} -\dfrac{2}{9}p_{m+2,n-1}\right),\\
p_{m,n}^{**}   = a_{1} p_{m,n}   + a_{2} \left(\dfrac{5}{12}p_{m-2,n} -\dfrac{5}{3}p_{m-1,n} -\dfrac{5}{3}p_{m+1,n} + \dfrac{5}{12} p_{m+2,n}\right),\\
p_{m,n+1}^{**} = a_{1} p_{m,n+1} + a_{2} \left(-\dfrac{2}{9}p_{m-2,n+1}+ \dfrac{8}{9}p_{m-1,n+1} + \dfrac{8}{9}p_{m+1,n+1}-\dfrac{2}{9}p_{m+2,n+1}\right),\\
p_{m,n+2}^{**} = a_{1} p_{m,n+2} + a_{2} \left(\dfrac{1}{72}p_{m-2,n+2} -\dfrac{1}{18}p_{m-1,n+2} -\dfrac{1}{18}p_{m+1,n+2}+ \dfrac{1}{72}p_{m+2,n+2}\right).
\end{cases}
\end{align}
where $a_2=1-a_1$, and $a_1$ is a parameter to be determined. It follows from substituting above values into \eqref{S1.eq.09} and \eqref{S1.eq.10} that,
\begin{align}\label{S1.eq.13}
\mathcal{L}_{x}^{*}p_{m ,n}&=
\frac{1}{\Delta x^2}\left[
-\frac{9}{8}A_{m-\frac{1}{2},n}\left(\frac{1}{24}p^{*}_{m-2,n}-\frac{9}{8}p^{*}_{m-1,n}+\frac{9}{8}p^{*}_{m,n}-\frac{1}{24}p^{*}_{m+1,n}\right)
\right. \notag \\ &
+\frac{1}{24}A_{m-\frac{3}{2},n}\left(-\frac{11}{12}p^{*}_{m-2,n}+\frac{17}{24}p^{*}_{m-1,n}+\frac{3}{8}p^{*}_{m,n}-\frac{5}{24}p^{*}_{m+1,n}+\frac{1}{24}p^{*}_{m+2,n}\right)
\notag \\ &
-\frac{1}{24}A_{m+\frac{3}{2},n}\left(-\frac{1}{24}p^{*}_{m-2,n}+\frac{5}{24}p^{*}_{m-1,n}-\frac{3}{8}p^{*}_{m,n}-\frac{17}{24}p^{*}_{m+1,n}+\frac{11}{12}p^{*}_{m+2,n}\right)
\notag \\ & \left.
+\frac{9}{8}A_{m+\frac{1}{2},n}\left(\frac{1}{24}p^{*}_{m-1,n}-\frac{9}{8}p^{*}_{m,n}+\frac{9}{8}p^{*}_{m+1,n}-\frac{1}{24}p^{*}_{m+2,n}\right)
\right],
\end{align}
\begin{align}\label{S1.eq.14}
\mathcal{L}_{z}^{*}p_{m ,n}&=
\frac{1}{\Delta z^2}\left[
-\frac{9}{8}B_{m,n-\frac{1}{2}}\left(\frac{1}{24}p^{**}_{m,n-2}-\frac{9}{8}p^{**}_{m,n-1}+\frac{9}{8}p^{**}_{m,n}-\frac{1}{24}p^{**}_{m,n+1}\right)
\right. \notag \\ &
+\frac{1}{24}B_{m,n-\frac{3}{2}}\left(-\frac{11}{12}p^{**}_{m,n-2}+\frac{17}{24}p^{**}_{m,n-1}+\frac{3}{8}p^{**}_{m,n}-\frac{5}{24}p^{**}_{m,n+1}+\frac{1}{24}p^{**}_{m,n+2}\right)
\notag \\ &
-\frac{1}{24}B_{m,n+\frac{3}{2}}\left(-\frac{1}{24}p^{**}_{m,n-2}+\frac{5}{24}p^{**}_{m,n-1}-\frac{3}{8}p^{**}_{m,n}-\frac{17}{24}p^{**}_{m,n+1}+\frac{11}{12}p^{**}_{m,n+2}\right)
\notag \\ & \left.
+\frac{9}{8}B_{m,n+\frac{1}{2}}\left(\frac{1}{24}p^{**}_{m,n-1}-\frac{9}{8}p^{**}_{m,n}+\frac{9}{8}p^{**}_{m,n+1}-\frac{1}{24}p^{**}_{m,n+2}\right)
\right].
\end{align}

Therefore,  the Laplacian operator with PML can be approximated as follows,
\begin{align}\label{S1.eq.15}
\frac{\partial}{\partial x}\left(A\frac{\partial p}{\partial x}\right)+\frac{\partial}{\partial z}\left(B\frac{\partial p}{\partial z}\right)
\approx
\mathcal{L}_{h} p_{m,n},
\end{align}
where $\mathcal{L}^{*}=\mathcal{L}_{x}^{*}+\mathcal{L}_{z}^{*}$.

Moreover, we can approximate $k^2_{m,n}C_{m,n}p_{m,n}$ with fourth-order accuracy. Let
\begin{align}
\label{S1.eq.17}
I^{(1)}\left(Q_{m,n}\right)& = Q_{m,n},\\
\label{S1.eq.18}
I^{(2)}\left(Q_{m,n}\right)& = \frac{1}{3}\,\left(Q_{m-1,n}+Q_{m+1,n}+Q_{m,n-1}+Q_{m,n+1}\right)
\notag \\ &
-\frac{1}{12}\,\left(Q_{m-2,n}+Q_{m+2,n}+Q_{m,n-2}+Q_{m,n+2}\right),
\\
\label{S1.eq.19}
I^{(3)}\left(Q_{m,n}\right)&=
\frac{1}{3}\,\left(Q_{m-1,n-1}+Q_{m+1,n+1}+Q_{m-1,n+1}+Q_{m+1,n-1}\right)
\notag \\ &
-\frac{1}{12}\,\left(Q_{m-2,n-2}+Q_{m+2,n+2}+Q_{m-2,n+2}+Q_{m+2,n-2}\right),\\
\label{S1.eq.20}
I^{(4)}\left(Q_{m,n}\right)&=
\frac{1}{36}\left(Q_{m-2,n-2}+Q_{m-2,n+2}+Q_{m+2,n-2}+Q_{m+2,n+2}\right)
\notag \\ &
-\frac{1}{9}\left(Q_{m-1,n-2}+Q_{m+1,n-2}+Q_{m-2,n-1}+Q_{m+2,n-1}
\right. \notag \\ & \left.
+Q_{m-1,n+2}+Q_{m+1,n+2}+Q_{m-2,n+1}+Q_{m+2,n+1} \right)
\notag \\ &
+\frac{4}{9}\left(Q_{m-1,n-1}+Q_{m+1,n-1}+Q_{m-1,n+1}+Q_{m+1,n+1}\right),
\end{align}
where $Q_{m,n}=k^2_{m,n}C_{m,n}p_{m,n}$. Therefore,
\begin{align}\label{S1.eq.21}
I^{*}\left(k^2_{m,n}C_{m,n}p_{m,n}\right)&=\sum_{j=1}^{4}c_{j}\,I^{(j)}\left(k^2_{m,n}C_{m,n}p_{m,n}\right),
\end{align}
where $c_j$ are parameters satisfying $\sum_{j=1}^{4}c_j= 1$.

Therefore, an optimal 25-point finite difference scheme for the Helmholtz-PML equation \eqref{S1.eq.02} can be obtained as follows,
\begin{align}\label{S1.eq.22}
\mathcal{L}^{*}\left(p_{m ,n}\right)+I^{*}\left(k^2_{m,n}C_{m,n}p_{m,n}\right)=\tilde{g}_{m,n}.
\end{align}
We refer the finite difference scheme \eqref{S1.eq.22} as optimal point-weighting 25-point finite difference method. In Proposition \ref{S1.P01}, we demonstrate that this finite difference scheme is consistent with the Helmholtz equation with PML \eqref{S1.eq.02}.

Alternatively, we can develop another optimal finite difference method by replacing $p_{m-2,n}$, $p_{m-1,n}$, \dots, $p_{m,n+1}$ and $p_{m,n+2}$ from equations \eqref{S1.eq.09} and \eqref{S1.eq.10} with the following values,
\begin{align}\label{S1.eq.23}
\begin{cases}
\hat{p}_{m-2,n} = b_1 p_{m-2,n} + \dfrac{b_{2}}{2}\left(p_{m-2,n+2}+p_{m-2,n-2}-p_{m,n-2}-p_{m,n+2}\right),
\\ \vspace{-0.4cm} \\
\hat{p}_{m-1,n} = b_1 p_{m-1,n} + \dfrac{b_{2}}{2}\left(p_{m-1,n+1}+p_{m-1,n-1}-p_{m,n-1}-p_{m,n+1}\right),
\\ \vspace{-0.4cm} \\
\hat{p}_{m,n}   = b_1 p_{m,n},
\\ \vspace{-0.4cm} \\
\hat{p}_{m+1,n} = b_1 p_{m+1,n} + \dfrac{b_{2}}{2}\left(p_{m+1,n-1}+p_{m+1,n+1}-p_{m,n+1}-p_{m,n-1}\right),
\\ \vspace{-0.4cm} \\
\hat{p}_{m+2,n} = b_1 p_{m+2,n} + \dfrac{b_{2}}{2}\left(p_{m+2,n-2}+p_{m+2,n+2}-p_{m,n+2}-p_{m,n-2}\right),
\end{cases}
\end{align}
and
\begin{align}\label{S1.eq.24}
\begin{cases}
\tilde{p}_{m,n-2} = b_1 p_{m,n-2} + \dfrac{b_{2}}{2}\left(p_{m+2,n-2}+p_{m-2,n-2}-p_{m-2,n}-p_{m+2,n}\right),
\\ \vspace{-0.4cm} \\
\tilde{p}_{m,n-1} = b_1 p_{m,n-1} + \dfrac{b_{2}}{2}\left(p_{m+1,n-1}+p_{m-1,n-1}-p_{m-1,n}-p_{m+1,n}\right),
\\ \vspace{-0.4cm} \\
\tilde{p}_{m,n}   = b_1 p_{m,n}  ,\\ \vspace{-0.4cm} \\
\tilde{p}_{m,n+1} = b_1 p_{m,n+1} + \dfrac{b_{2}}{2}\left(p_{m-1,n+1}+p_{m+1,n+1}-p_{m+1,n}-p_{m-1,n}\right),
\\ \vspace{-0.4cm} \\
\tilde{p}_{m,n+2} = b_1 p_{m,n+2} + \dfrac{b_{2}}{2}\left(p_{m-2,n+2}+p_{m+2,n+2}-p_{m+2,n}-p_{m-2,n}\right).
\end{cases}
\end{align}
where $b_2=1-b_1$ and $b_1$ is a parameter to be determined. It follows from substituting above values into \eqref{S1.eq.09} and \eqref{S1.eq.10} that,
\begin{align}\label{S1.eq.25}
\mathcal{\tilde{L}}_{x}p_{m ,n}&=
\frac{1}{\Delta x^2}\left[
-\frac{9}{8}A_{m-\frac{1}{2},n}\left(\frac{1}{24}\hat{p}_{m-2,n}-\frac{9}{8}\hat{p}_{m-1,n}+\frac{9}{8}\hat{p}_{m,n}-\frac{1}{24}\hat{p}_{m+1,n}\right)
\right. \notag \\ &
+\frac{1}{24}A_{m-\frac{3}{2},n}\left(-\frac{11}{12}\hat{p}_{m-2,n}+\frac{17}{24}\hat{p}_{m-1,n}+\frac{3}{8}\hat{p}_{m,n}-\frac{5}{24}\hat{p}_{m+1,n}+\frac{1}{24}\hat{p}_{m+2,n}\right)
\notag \\ &
-\frac{1}{24}A_{m+\frac{3}{2},n}\left(-\frac{1}{24}\hat{p}_{m-2,n}+\frac{5}{24}\hat{p}_{m-1,n}-\frac{3}{8}\hat{p}_{m,n}-\frac{17}{24}\hat{p}_{m+1,n}+\frac{11}{12}\hat{p}_{m+2,n}\right)
\notag \\ & \left.
+\frac{9}{8}A_{m+\frac{1}{2},n}\left(\frac{1}{24}\hat{p}_{m-1,n}-\frac{9}{8}\hat{p}_{m,n}+\frac{9}{8}\hat{p}_{m+1,n}-\frac{1}{24}\hat{p}_{m+2,n}\right)
\right],
\end{align}
\begin{align}\label{S1.eq.26}
\mathcal{\tilde{L}}_{z}p_{m ,n}&=
\frac{1}{\Delta z^2}\left[
-\frac{9}{8}B_{m,n-\frac{1}{2}}\left(\frac{1}{24}\tilde{p}_{m,n-2}-\frac{9}{8}\tilde{p}_{m,n-1}+\frac{9}{8}\tilde{p}_{m,n}-\frac{1}{24}\tilde{p}_{m,n+1}\right)
\right. \notag \\ &
+\frac{1}{24}B_{m,n-\frac{3}{2}}\left(-\frac{11}{12}\tilde{p}_{m,n-2}+\frac{17}{24}\tilde{p}_{m,n-1}+\frac{3}{8}\tilde{p}_{m,n}-\frac{5}{24}\tilde{p}_{m,n+1}+\frac{1}{24}\tilde{p}_{m,n+2}\right)
\notag \\ &
-\frac{1}{24}B_{m,n+\frac{3}{2}}\left(-\frac{1}{24}\tilde{p}_{m,n-2}+\frac{5}{24}\tilde{p}_{m,n-1}-\frac{3}{8}\tilde{p}_{m,n}-\frac{17}{24}\tilde{p}_{m,n+1}+\frac{11}{12}\tilde{p}_{m,n+2}\right)
\notag \\ & \left.
+\frac{9}{8}B_{m,n+\frac{1}{2}}\left(\frac{1}{24}\tilde{p}_{m,n-1}-\frac{9}{8}\tilde{p}_{m,n}+\frac{9}{8}\tilde{p}_{m,n+1}-\frac{1}{24}\tilde{p}_{m,n+2}\right)
\right].
\end{align}
Letting $\mathcal{\tilde{L}}=\mathcal{\tilde{L}}_{x}+\mathcal{\tilde{L}}_{z}$, the first two terms of the left hand side of \eqref{S1.eq.02} can be approximated as follows,
\begin{align}\label{S1.eq.27}
\frac{\partial}{\partial x}\left(A\frac{\partial p}{\partial x}\right)+\frac{\partial}{\partial z}\left(B\frac{\partial p}{\partial z}\right)
\approx
\mathcal{\tilde{L}}_{h} p_{m,n},
\end{align}
Similarly, let
\begin{align}\label{S1.eq.28}
\tilde{I}\left(k^2_{m,n}C_{m,n}p_{m,n}\right)&=\sum_{j=1}^{3}d_{j}\,I^{(j)}\left(k^2_{m,n}C_{m,n}p_{m,n}\right),
\end{align}
where $d_j$ are parameters satisfying $\sum_{j=1}^{3}d_j= 1$.

As a result, an optimal 17-point finite difference scheme for the Helmholtz-PML equation \eqref{S1.eq.02} can be obtained as follows,
\begin{align}\label{S1.eq.29}
\mathcal{\tilde{L}}\left(p_{m ,n}\right)+\tilde{I}\left(k^2_{m,n}C_{m,n}p_{m,n}\right)=\tilde{g}_{m,n}.
\end{align}
We refer the finite difference scheme \eqref{S1.eq.29} as optimal point-weighting 17-point finite difference.

Furthermore, let  $\mathcal{L}= \mathcal{L}_{x}p_{m ,n}+\mathcal{L}_{z}p_{m ,n}$. Then, we refer
\begin{align}
\label{S1.eq.30}
\mathcal{L}\left(p_{m ,n}\right)+k^2_{m,n}C_{m,n}p_{m,n}=\tilde{g}_{m,n},
\end{align}
as non-compact fourth-order (NC fourth-order). This scheme will be part of our final analysis in Section \ref{S3}.

In the next section, we will discuss minimizing the numerical dispersion using the dispersion relation formula.

\begin{definition}\label{S1.def1}
Let $(x_m ,z_n ) = (x_0 + m\Delta x,~z_0 + n\Delta z)$ for $m,n=0,1,2,\ldots$, and suppose that the partial differential equation under consideration is $(\Delta +k^2)p = g$, and the corresponding finite difference approximation is $\mathcal{L} P_{m,n} = G_{m,n}$ where $G_{m,n}=g(x_n,y_n)$.  The finite difference scheme $\mathcal{L} P_{m,n} = G_{m,n}$ is pointwise consistent with the partial differential equation $(\Delta +k^2)p = g$ at $(x,z)$ if for any smooth function $\phi(x,z)$,
\begin{align}\label{S1.def.01}
\left\|\left.\left((\Delta +k^2)\varphi - g\right) \right|_{x=x_m,~z=z_n} -\left[\mathcal{L} \varphi(x_m ,z_n) - G_{m,n} \right]\right\| \rightarrow 0
\end{align}
as $\Delta x,~\Delta z \to 0$.
\end{definition}

From now on, for simplicity, we set $\gamma=\Delta z/\Delta x$ (note that $\gamma$ is a positive constant) and let $\Delta x = h$, $\Delta z = \gamma h$ and $\eta=1+1/\gamma^2$.

\begin{proposition}\label{S1.P01}
If $\sum_{j=1}^{2}b_j= 1$, $\sum_{j=1}^{4}c_j= 1$, then optimal 25-point \eqref{S1.eq.22} and optimal 17-point \eqref{S1.eq.29} finite difference schemes are pointwise consistent with the Helmholtz-PML equation \eqref{S1.eq.02} and is a fourth-order scheme.
\end{proposition}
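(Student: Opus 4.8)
The plan is to verify Definition \ref{S1.def1} directly: insert an arbitrary smooth test function $\varphi$ into each scheme, Taylor-expand every stencil about $(x_m,z_n)$, and show the local truncation error is $O(h^4)$. Since $\gamma=\Delta z/\Delta x$ is held fixed, $\Delta x$, $\Delta z$ and $h$ are comparable, so a single $O(h^4)$ bookkeeping controls both directions, and this bound yields both pointwise consistency (the quantity in \eqref{S1.def.01} tends to $0$) and the fourth-order claim. I would treat \eqref{S1.eq.22} and \eqref{S1.eq.29} in parallel, because they share the same architecture: a discretization of the PML-Laplacian $\partial_x(A\partial_x p)+\partial_z(B\partial_z p)$ (namely $\mathcal{L}^{*}$ or $\tilde{\mathcal{L}}$) plus a discretization of the zeroth-order term $k^2Cp$ (namely $I^{*}$ or $\tilde{I}$). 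It therefore suffices to show (i) the PML-Laplacian part approximates $\partial_x(A\partial_x\varphi)+\partial_z(B\partial_z\varphi)$ to fourth order and (ii) the reaction part approximates $k^2C\varphi$ to fourth order; adding the two and subtracting $\tilde g_{m,n}$ gives the consistency estimate.

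For the reaction term I would first record that $I^{(1)}(Q)=Q_{m,n}$ exactly, and that each of $I^{(2)},I^{(3)},I^{(4)}$ reconstructs $Q_{m,n}$ to fourth order: $I^{(2)}$ and $I^{(3)}$ are fourth-order-accurate symmetric averages over the axial and diagonal neighbours respectively, while $I^{(4)}$ factors as the tensor product of the one-dimensional fourth-order interpolations $\tfrac23(\cdot_{\pm1})-\tfrac16(\cdot_{\pm2})$ in $x$ and in $z$. In each case the stencil is symmetric (so odd moments vanish), has zeroth moment $1$ and vanishing second moment, whence $I^{(j)}(Q)=Q_{m,n}+O(h^4)$ for smooth $Q$. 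Since $Q=k^2Cp$ is smooth and the weights obey $\sum_{j=1}^{4}c_j=1$ (respectively $\sum_{j=1}^{3}d_j=1$), I conclude $I^{*}(Q)=Q_{m,n}+O(h^4)=\left.k^2C\varphi\right|_{(x_m,z_n)}+O(h^4)$, and likewise for $\tilde{I}$.

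For the PML-Laplacian part I would proceed in two stages. First, the base operators $\mathcal{L}_x$ in \eqref{S1.eq.09} and $\mathcal{L}_z$ in \eqref{S1.eq.10} are fourth-order approximations of $\partial_x(A\partial_x\varphi)$ and $\partial_z(B\partial_z\varphi)$ by construction: the outer weights $\alpha_i,\beta_i$ reproduce the derivative of $A\varphi_x$ sampled at the half-points to fourth order, the inner weights $w_i$ give fourth-order approximations of $\varphi_x$ at those half-points, and the half-point values $A_{m\pm\frac12,n},A_{m\pm\frac32,n}$ are inserted exactly. Second, I would show the point-weighting substitutions \eqref{S1.eq.11}--\eqref{S1.eq.12} (and \eqref{S1.eq.23}--\eqref{S1.eq.24}) perturb $\mathcal{L}_x,\mathcal{L}_z$ only at $O(h^4)$. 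The crucial computation is that, under $a_1+a_2=1$ (respectively $b_1+b_2=1$), each replaced nodal value satisfies $p^{*}_{m+i,n}=\varphi_{m+i,n}+O(h^4)$, with the $O(h^4)$ defect equal to a smooth multiple of $(\Delta z)^4\partial_z^4\varphi$ sampled at the grid point. Because this defect is itself a smoothly varying sampled function, applying $\mathcal{L}_x$ to it returns $\mathcal{L}_x$ acting on a smooth $O(h^4)$ function, i.e. $(\Delta z)^4\,\partial_x\!\bigl(A\,\partial_x\partial_z^4\varphi\bigr)+O(h^8)=O(h^4)$; summing the $x$- and $z$-contributions gives $\mathcal{L}^{*}\varphi=\partial_x(A\partial_x\varphi)+\partial_z(B\partial_z\varphi)+O(h^4)$, and the same for $\tilde{\mathcal{L}}$.

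The main obstacle is exactly this second stage: one must ensure that the defect $p^{*}_{m+i,n}-\varphi_{m+i,n}$ (and its $\hat p,\tilde p$ analogues) is a \emph{smoothly varying} grid function of size $O(h^4)$, rather than a rough, high-frequency grid perturbation. Only then does passing it through the difference operator $\mathcal{L}_x$ — which carries the prefactor $\Delta x^{-2}$ — return $O(h^4)$ instead of an amplified $O(h^2)$ or even $O(1)$ contribution. Establishing this requires checking by Taylor expansion of \eqref{S1.eq.11}--\eqref{S1.eq.12} that the transverse averaging weights reconstruct the nodal value with zeroth moment $1$ and vanishing first and second moments, so that $a_1+a_2=1$ removes the $O(1)$ part and the moment conditions remove the $O(h^2)$ part, leaving a defect proportional to a sampled fourth transverse derivative of $\varphi$; the analogous check is needed for \eqref{S1.eq.23}--\eqref{S1.eq.24} under $b_1+b_2=1$. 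The remaining work — tracking the variable half-point coefficients and confirming they enter only through their own smoothness — is routine once these leading cancellations are verified.
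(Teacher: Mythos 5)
Your decomposition into a PML--Laplacian part plus a reaction part, and your treatment of the reaction term via symmetry/moment conditions on $I^{(1)},\ldots,I^{(4)}$, are sound and agree with what the paper's Taylor expansions establish. Your argument for the 25-point operator $\mathcal{L}^{*}$ also captures the correct mechanism: there the point-weighting is a transverse fourth-order interpolation with unit weight sum, each $p^{*}$-value differs from the nodal value by a smooth multiple of $\Delta z^{4}\partial_z^{4}\varphi$, and pushing that smooth $O(h^{4})$ defect through $\mathcal{L}_x$ costs only $O(h^4)$; this is exactly what $\zeta_1$ in \eqref{S1.P01.05} encodes, since every $(a_1-1)$-term there has the form $\partial_x\bigl(A\,\partial_x\partial_z^4 p\bigr)$ or its $z$-counterpart.

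However, your ``crucial computation'' is false for the 17-point scheme, and this is a genuine gap. In \eqref{S1.eq.23}--\eqref{S1.eq.24} the correction weights sum to zero, not one, and the corrections involve points from a \emph{different} column or row: Taylor expansion gives $\hat{p}_{m\pm1,n}-p_{m\pm1,n}=b_2\left(p_{m\pm1,n}-p_{m,n}\right)+O(h^2)=O(h)$, while at the center $\hat{p}_{m,n}=b_1p_{m,n}$ exactly, an $O(1)$ defect whenever $b_1\neq1$. No per-node moment condition can repair this, so your second stage collapses for \eqref{S1.eq.29}. Consistency of the 17-point scheme comes instead from a cancellation \emph{across} the stencil: in the constant-coefficient case, summing the corrections against the outer weights $\left(-\tfrac{1}{12},\tfrac{4}{3},-\tfrac{5}{2},\tfrac{4}{3},-\tfrac{1}{12}\right)$ assembles the scale-one and scale-two diagonal differences into an independent approximation $b_2\,\Delta x^2\,\partial_x^2p+O(h^6)$ (the $\partial_x^4p$ and $\partial_x^2\partial_z^2p$ contributions cancel between the two scales), so the scheme is a convex combination, with $b_1+b_2=1$, of two \emph{different} fourth-order discretizations of the same operator --- the rotated-scheme philosophy, not a pointwise reconstruction. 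Seeing this requires either that global decomposition or the brute-force expansion of the full operator that the paper performs; the fingerprint of the cross-stencil mechanism is visible in $\xi_1$ of \eqref{S1.P01.07}, e.g.\ the term $\tfrac{9}{32}(b_1-1)\gamma^2\left(\partial_z^2\partial_x^2p\right)\left(\partial_x^2A+\partial_z^2B\right)$ coupling $b_1$ to derivatives of $A$ and $B$, which has no analogue in $\zeta_1$. A side remark: if you apply your moment check literally to \eqref{S1.eq.11}, the printed transverse weights sum to $-\tfrac{1}{12},\tfrac{4}{3},-\tfrac{5}{2},\ldots$ rather than to one; the paper's own dispersion coefficients \eqref{S2.eq.02} show the intended weights are the unit-sum interpolation $\left(-\tfrac16,\tfrac23,\tfrac23,-\tfrac16\right)$ in every row, which is the version your 25-point argument needs.
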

\begin{proof}
Let $(x,z)\in [x_m,x_{m+1})\times [x_n,x_{n+1})$. We expand all $p^{*}_{i,j}$, $p^{**}_{i,j}$, $\hat{p}_{i,j}$ and $\tilde{p}_{i,j}$ using the Taylor's theorem. For example, for $p_{m-2,n}^{*}$, we have,
\begin{align}
p_{m-2,n}^{*}&=b_{1}p-2h\frac{\partial }{\partial x} p
+2h^2\frac{\partial ^2}{\partial x^2} p
+\frac{4}{3}h^3\left(
3\gamma^2\left(b_{1}-1\right)\frac{\partial ^2}{\partial z^2} \frac{\partial }{\partial x} p
-\frac{\partial ^3}{\partial x^3} p
\right)
\notag \\&
-\frac{2}{3}h^4\left(
6\gamma^2\left(b_{1}-1\right)\frac{\partial ^2}{\partial z^2} \frac{\partial ^2}{\partial x^2} p
-\frac{\partial ^4}{\partial x^4} p \right)
+\frac{4}{15}h^5\left(
5\gamma^4\left(b_{1}-1\right)\left(2\frac{\partial ^2}{\partial z^2} \frac{\partial ^3}{\partial x^3} p
+\frac{\partial ^4}{\partial z^4} \frac{\partial }{\partial x} p\right)
\right.  \notag \\&\left.
-\frac{\partial ^5}{\partial x^5} p
\right)
-\frac{4}{45}h^6\left(
15\gamma^2\left(b_{1}-1\right)\left(\frac{\partial ^2}{\partial z^2} \frac{\partial ^4}{\partial x^4} p+\frac{\partial ^4}{\partial z^4} \frac{\partial ^2}{\partial x^2} p
-\frac{\partial ^6}{\partial x^6} p\right)\right)+O(h^7).
\end{align}
It follows from the Taylor's theorem that
\begin{align}
\label{S1.P01.01}
\mathcal{L}^{*}\left(p_{m ,n}\right)&=
\frac{\partial}{\partial x}\left(A\frac{\partial p}{\partial x}\right)+\frac{\partial}{\partial z}\left(B\frac{\partial p}{\partial z}\right)
+\zeta_1\,h^4+O(h^6),\\
\label{S1.P01.02}
\mathcal{\tilde{L}}\left(p_{m ,n}\right)&=
\frac{\partial}{\partial x}\left(A\frac{\partial p}{\partial x}\right)+\frac{\partial}{\partial z}\left(B\frac{\partial p}{\partial z}\right)
+\xi_1\,h^4+O(h^6),\\
\label{S1.P01.03}
I^{*}\left(k^2_{m,n}C_{m,n}p_{m,n}\right)&=k^2Cp + \zeta_2 h^4 + O\left(h^6\right),\\
\label{S1.P01.04}
\tilde{I}\left(k^2_{m,n}C_{m,n}p_{m,n}\right)&=k^2Cp + \xi_2 h^4 + O\left(h^6\right),
\end{align}
where $\zeta_1$, $\zeta_2$, $\xi_1$ and $\xi_2$ are given as follows

\begin{align}\label{S1.P01.05}
\zeta_{1}&=
\frac{a_{1}-1}{6}\left(
B\frac{\partial ^2}{\partial z^2} \frac{\partial ^4}{\partial x^4} p+\gamma^4A\frac{\partial ^4}{\partial z^4} \frac{\partial ^2}{\partial x^2} p \right)
-\frac{1}{90}\left( A\frac{\partial ^6}{\partial x^6} p+\gamma^4B\frac{\partial ^6}{\partial z^6} p
\right)
\notag \\ &
+\frac{a_{1}-1}{6}\left(
\gamma^4\frac{\partial ^4}{\partial z^4} \frac{\partial }{\partial x} p\frac{\partial }{\partial x} A
+\frac{\partial }{\partial z} \frac{\partial ^4}{\partial x^4} p\frac{\partial }{\partial z} B \right)
-\frac{1}{30}\left(
\frac{\partial ^5}{\partial x^5} p\frac{\partial }{\partial x} A
+\gamma^4\frac{\partial ^5}{\partial z^5} p\frac{\partial }{\partial z} B
\right)
\notag \\ &
-\frac{3}{64}\left(
\frac{\partial ^4}{\partial x^4} p\frac{\partial ^2}{\partial x^2} A
+\gamma^4\frac{\partial ^4}{\partial z^4} p\frac{\partial ^2}{\partial z^2} B
+ \frac{\partial ^3}{\partial x^3} A\frac{\partial ^3}{\partial x^3} p
+\gamma^4\frac{\partial ^3}{\partial z^3} p\frac{\partial ^3}{\partial z^3} B \right)
\notag \\ &
-\frac{3}{128}\left(
\frac{\partial ^4}{\partial x^4} A\frac{\partial ^2}{\partial x^2} p
+\gamma^4\frac{\partial ^4}{\partial z^4} B\frac{\partial ^2}{\partial z^2} p \right)
-\frac{3}{640}
\left(
\frac{\partial ^5}{\partial x^5} A\frac{\partial }{\partial x} p
+\gamma^4\frac{\partial ^5}{\partial z^5} B\frac{\partial }{\partial z} p \right),
\end{align}
\begin{align}\label{S1.P01.06}
\zeta_2&=
-\frac{\left( c_{2}+2c_{3}+2c_{4} \right)}{12}\left(\frac{\partial ^4}{\partial x^4} \left(k^2Cp\right)+\gamma^4\frac{\partial ^4}{\partial z^4} \left(k^2Cp\right)\right)
-c_{3}\gamma^2\frac{\partial ^2}{\partial z^2} \frac{\partial ^2}{\partial x^2} \left(k^2Cp\right),
\end{align}
\begin{align}\label{S1.P01.07}
\xi_{1}&=\frac{b_{1}-1}{6}\left( \frac{\partial ^2}{\partial z^2} \frac{\partial ^4}{\partial x^4} p+\gamma^2\frac{\partial ^4}{\partial z^4} \frac{\partial ^2}{\partial x^2} p \right) \left(A\gamma^2+B\right)
-\frac{1}{90}\left( A\frac{\partial ^6}{\partial x^6} p+\gamma^4B\frac{\partial ^6}{\partial z^6} p\right)
\notag \\ &
+\frac{b_{1}-1}{6}\left(
\gamma^4\frac{\partial ^4}{\partial z^4} \frac{\partial }{\partial x} p\frac{\partial }{\partial x} A
+2\gamma^2\frac{\partial ^2}{\partial z^2} \frac{\partial ^3}{\partial x^3} p\frac{\partial }{\partial x}A
+2\gamma^2\frac{\partial ^3}{\partial z^3} \frac{\partial ^2}{\partial x^2} p\frac{\partial }{\partial z} B
+\frac{\partial }{\partial z} \frac{\partial ^4}{\partial x^4} p\frac{\partial }{\partial z} B
\right)
\notag \\ &
-\frac{1}{30}\left(
\frac{\partial ^5}{\partial x^5} p\frac{\partial }{\partial x} A
+\gamma^4\frac{\partial ^5}{\partial z^5} p\frac{\partial }{\partial z} B \right)
+\frac{9}{32}(b_{1}-1)\gamma^2\left(
\frac{\partial ^2}{\partial z^2} \frac{\partial ^2}{\partial x^2} p\right)
\left(\frac{\partial ^2}{\partial x^2} A+\frac{\partial ^2}{\partial z^2} B
\right)
\notag \\ &
-\frac{3}{64}\left(
\frac{\partial ^3}{\partial x^3} A\frac{\partial ^3}{\partial x^3} p
+\gamma^4\frac{\partial ^3}{\partial z^3} p\frac{\partial ^3}{\partial z^3} B\right)
+\frac{9}{64}\gamma^2(b_{1}-1)\left(\frac{\partial ^3}{\partial x^3} A\frac{\partial ^2}{\partial z^2} \frac{\partial }{\partial x} p+\frac{\partial }{\partial z} \frac{\partial ^2}{\partial x^2} p\frac{\partial ^3}{\partial z^3} B \right)
\notag \\ &
-\frac{3}{128}\left(\frac{\partial ^4}{\partial x^4} A\frac{\partial ^2}{\partial x^2} p
+\gamma^4\frac{\partial ^4}{\partial z^4} B\frac{\partial ^2}{\partial z^2} p \right)
-\frac{3}{640}\left(\frac{\partial ^5}{\partial x^5} A\frac{\partial }{\partial x} p
+\gamma^4\frac{\partial ^5}{\partial z^5} B\frac{\partial }{\partial z} p\right),
\end{align}

\begin{align}\label{S1.P01.08}
\xi_{2}=&
-\frac{\left(d_{2}+2d_{3}\right)}{12}\left(
 \frac{\partial ^4}{\partial x^4} \left(k^2Cp\right)+\gamma^4\frac{\partial ^4}{\partial z^4} \left(k^2Cp\right)
\right)
-d_{3}\gamma^2\frac{\partial ^2}{\partial z^2} \frac{\partial ^2}{\partial x^2} \left(k^2Cp\right).
\end{align}

Let $\zeta = \zeta_1 + \zeta_2 $ and $\xi = \xi_1 + \xi_2 $, then it follows from \eqref{S1.eq.22} and \eqref{S1.eq.29} that
\begin{align}
\label{S1.P01.09}
\mathcal{L}^{*}\left(p_{m ,n}\right)+I^{*}\left(k^2_{m,n}C_{m,n}p_{m,n}\right)&=
\frac{\partial}{\partial x}\left(A\frac{\partial p}{\partial x}\right)+\frac{\partial}{\partial z}\left(B\frac{\partial p}{\partial z}\right)+C k^2 p
\notag \\ &
 + \zeta\,h^4 + O\left(h^6\right),\\
\label{S1.P01.10}
\mathcal{\tilde{L}}\left(p_{m ,n}\right)+\tilde{I}\left(k^2_{m,n}C_{m,n}p_{m,n}\right)&=
\frac{\partial}{\partial x}\left(A\frac{\partial p}{\partial x}\right)+\frac{\partial}{\partial z}\left(B\frac{\partial p}{\partial z}\right)+C k^2 p
\notag \\ &
 + \xi\,h^4 + O\left(h^6\right).
\end{align}
The results of this proposition can be concluded from \eqref{S1.P01.09}, \eqref{S1.P01.10} and \eqref{S1.eq.02}.
\end{proof}

What stands out from equations \eqref{S1.P01.09} and \eqref{S1.P01.10} is that optimal point-weighting 17-point and 25-point finite difference methods are fourth-order for arbitrary values of their parameters, $a_i$, $b_i$, $c_i$ and $d_j$, under the conditions $\sum_{i=1}^{2}a_i=1$, $\sum_{i=1}^{2}b_i=1$, \linebreak $\sum_{h=1}^{4}c_j=1$ and $\sum_{h=1}^{3}d_j=1$. Since $\zeta$ and $\eta$ depend on $k$, $A$ and $B$,  as shown in \eqref{S1.P01.05} - \eqref{S1.P01.08}, the convergence order and accuracy may be affected by the values of $k$ and its derivatives, especially when $k$ is large.

\section{Numerical dispersion analysis and parameter selection strategy}\label{S2}
In this section, we present numerical dispersion analyses for optimal point-weighting 17-point and 25-point finite difference methods. In doing so, consider an infinite homogeneous model with constant velocity $v$ to do a dispersion analysis. Let $P(x,z) = \exp\left({-i\,k(x\cos\theta +z \sin\theta )}\right)$, where $\theta$  is the propagation angle from the z-axis, and the wavenumber $k=2\pi f/v$ is also a positive constant.

In the interior area, $A = B = C = 1$, thus replacing $p_{m + i , n + j}$ with $P_{m + i , n + j}$ ( $i , j\in\mathbb{Z}_3$) in the optimal 25-point finite difference scheme \eqref{S1.eq.22} gives
\begin{align}\label{S2.eq.01}
&T^{*}_{1} \left(P_{m-2,n-2}+P_{m+2,n-2}+P_{m-2,n+2}+P_{m+2,n+2}\right) +T^{*}_{3} \left(P_{m,n-2}+P_{m,n+2}\right)+
\notag \\ &
T^{*}_{2} \left(P_{m-1,n-2}+P_{m+1,n-2}+P_{m-1,n+2}+P_{m+1,n+2}\right) +T^{*}_{6} \left(P_{m,n-1}+P_{m,n+1}\right)+
\notag \\ &
T^{*}_{4} \left(P_{m-2,n-1}+P_{m+2,n-1}+P_{m-2,n+1}+P_{m+2,n+1}\right)+T^{*}_{7} \left(P_{m-2,n}+P_{m+2,n}\right)+
\notag \\ &
T^{*}_{5} \left(P_{m-1,n-1}+P_{m+1,n-1}+P_{m-1,n+1}+P_{m+1,n+1}\right) +T^{*}_{8} \left(P_{m-1,n}+P_{m+1,n}\right)+
\notag \\ &
T^{*}_{9} P_{m,n}
=0,
\end{align}
\begin{align}\label{S2.eq.02}
\begin{cases}
\begin{array}{ll}
T^{*}_{1} = \dfrac{(1-a_{1})\eta }{72\,h^2}-\dfrac{3\,c_{3}-c_{4}}{36} k^2,&
T^{*}_{2} = \dfrac{\left(\eta +3 \right)\left(a_{1}-1\right)}{18\,h^2}-\dfrac{c_{4}}{9} k^2,\\ \vspace{-0.4cm} \\
T^{*}_{3} = \dfrac{5-a_{1}\left(\eta -4\right)}{12\,h^2}-\dfrac{c_{2}}{12} k^2,&
T^{*}_{4} = \dfrac{\left(4\,\eta -3 \right)\left( a_{1}-1\right)}{18\,h^2}-\dfrac{c_{4}}{9} k^2,\\ \vspace{-0.4cm} \\
T^{*}_{5} = \dfrac{8\,\eta\left(1-a_{1}\right)}{9\,h^2}+\dfrac{3\,c_{3}+4\,c_{4}}{9} k^2,&
T^{*}_{6} = \dfrac{a_{1}\left(4\,\eta +1\right) -5}{3\,h^2}+\dfrac{c_{2}}{3} k^2,\\ \vspace{-0.4cm} \\
T^{*}_{7} = \dfrac{4\,a_{1}+5\,\eta\left(1 -a_{1}\right) -5}{12\,h^2}-\dfrac{c_{2}}{12} k^2,&
T^{*}_{8} = \dfrac{5\,\eta\left(a_{1} -1\right) -a_{1}+5}{3\,h^2} + \dfrac{c_{2}}{3} k^2,\\ \vspace{-0.4cm} \\
T^{*}_{9} = -\dfrac{5\,a_{1}\,\eta }{2\,h^2}+\left(1-c_{2}-c_{3}-c_{4}\right)\,k^2.
\end{array}
\end{cases}
\end{align}
Similarly, for the optimal 17-point finite difference scheme \eqref{S1.eq.29}, we have,
\begin{align}\label{S2.eq.01A}
&\tilde{T}_{1} \left(P_{m-2,n-2}+ P_{m+2,n-2}+ P_{m-2,n+2}+ P_{m+2,n+2}\right)
+\tilde{T}_{2} \left(P_{m,n-2}+P_{m,n+2}\right)+
\notag \\ &
\tilde{T}_{3} \left(P_{m-1,n-1}+ P_{m+1,n-1}+ P_{m-1,n+1}+ P_{m+1,n+1}\right)
+\tilde{T}_{4} \left(P_{m,n-1}+ P_{m,n+1}\right)+
\notag \\ &
\tilde{T}_{5} \left(P_{m-2,n}+ P_{m+2,n}\right)+
+\tilde{T}_{6} \left(P_{m-1,n}+ P_{m+1,n}\right)
+\tilde{T}_{7} P_{m,n}=0
\end{align}
where
\begin{align}\label{S2.eq.02A}
\begin{cases}
\begin{array}{ll}
\tilde{T}_{1} = \dfrac{\left(b_{1}-1\right)\,\eta }{24\,h^2}-\dfrac{d_{3}}{12}k^2,&
\tilde{T}_{2} = \dfrac{-b_{1}\eta +1}{12\,h^2}-\dfrac{d_{2}}{12}k^2,\\ \vspace{-0.4cm} \\
\tilde{T}_{3} = \dfrac{2\left(1-b_{1}\right)\,\eta }{3\,h^2}+\dfrac{d_{3}}{3}k^2,&
\tilde{T}_{4} = \dfrac{4\,b_{1}\eta -4}{3\,h^2}+\dfrac{d_{2}}{3}k^2,\\ \vspace{-0.4cm} \\
\tilde{T}_{5} = \dfrac{\left(1-b_{1}\right)\,\eta -1}{12\,h^2}-\dfrac{d_{2}}{12}k^2,&
\tilde{T}_{6} = \dfrac{4\left(b_{1}-1\right)\,\eta +4}{3\,h^2}+\dfrac{d_{2}}{3}k^2,\\ \vspace{-0.4cm} \\
\tilde{T}_{7} = -\dfrac{5\,b_{1}\eta }{2\,h^2}+\left(1-d_{2}-d_{3}\right)\,k^2.
\end{array}
\end{cases}
\end{align}
Let $\lambda=2\pi v/\omega$ and $G=\lambda/h$ denote the wavelength and the number of gridpoints per wavelength, respectively. Moreover, let
\begin{align}\label{S2.eq.03}
\begin{cases}
P = \cos(k_x \Delta x) =\cos(kh\cos\theta ) = \cos\left(\dfrac{2\pi}{G}\cos\theta\right),
\\ \vspace{-0.4cm} \\
Q = \cos(k_z \Delta z) = \cos\left(\gamma kh\sin\theta\right)= \cos\left(\gamma\dfrac{2\pi}{G}\sin\theta\right).
\end{cases}
\end{align}

It follows from substituting $P_{m,n} = \exp\left(-ik(x\cos\theta +z \sin\theta )\right)$ into equations \eqref{S2.eq.01} and \eqref{S2.eq.01A}, and simplifying that
\begin{align}
\label{S2.eq.04}
&
4\,\left(2\,P^2-1\right)\,\left(2\,Q^2-1\right)\,T^{*}_{1}+4\,P\,\left(2\,Q^2-1\right)\,T^{*}_{2}+\left(4\,Q^2-2\right)\,T^{*}_{3}
+4\,Q\,\left(2\,P^2-1\right)\,T^{*}_{4}
\notag \\ &
+4\,P\,Q\,T^{*}_{5}+2\,Q\,T^{*}_{6}+\left(4\,P^2-2\right)\,T^{*}_{7}+2\,P\,T^{*}_{8}+T^{*}_{9}
=0,
\\
\label{S2.eq.04A}
&4\,\left(2\,P^2-1\right)\,\left(2\,Q^2-1\right)\,\tilde{T}_{1}+\left(4\,Q^2-2\right)\,\tilde{T}_{2}+4\,P\,Q\,\tilde{T}_{3}
+2\,Q\,\tilde{T}_{4}+\left(4\,P^2-2\right)\,\tilde{T}_{5}
\notag \\ &
+2\,P\,\tilde{T}_{6}+\tilde{T}_{7}=0.
\end{align}

Furthermore, let $k^{*}_{N}$ and $\tilde{k}_{N}$ represent the numerical wavenumber for the finite difference schemes \eqref{S1.eq.22} and \eqref{S1.eq.29}, respectively. For the optimal point-weighting 25-point finite difference method \eqref{S1.eq.22}, it follows from replacing the variable $k$ in the parameters $T^{*}_{1},~T^{*}_{2},\ldots,T^{*}_{9}$ with $k_{N}$ in equation \eqref{S2.eq.04} that
\begin{align}\label{S2.eq.05}
k^{*}_{N}=\frac{1}{h}\sqrt{\frac{N^{*}}{D^{*}}},
\end{align}
where
\begin{align}
\label{S2.eq.06}
N^{*} &=
\eta\left(1-a_{1}\right)\left(2P^2Q^2+32PQ\right)
+4\left(4\eta -3 \right)\left( a_{1}-1\right)P^2Q
+\left(4 \left( \eta +3 \right)\left( a_{1}-1\right)\right)PQ^2
\notag \\ &
+\left(14\left(1-a_{1}\right)\eta +3\left(4a_{1}-5\right)\right)P^2
-\left(\left(2a_{1}+1\right)\eta + \left(4a_{1}-5\right)\right)Q^2
-7\left(2a_{1}+1\right)\eta
\notag \\ &
+\left(28\left(a_{1}-1\right)\eta +12\left(3-a_{1}\right)\right)P
+\left(8\left(2a_{1}+1\right)\eta +12\left(a_{1} - 3\right)\right)Q,
\\
\label{S2.eq.07}
D^{*}&=
4 \left(3c_{3}-c_{4}\right)P^2Q^2
+8c_{4}\left(P^2Q+PQ^2\right)
+\left(3c_{2}-6c_{3}+2c_{4}\right)\left(P^2+Q^2\right)
\notag \\ &
-4 \left(3c_{3}+4c_{4}\right)PQ
-2\left(3c_{2}+2c_{4}\right)\left(P+Q\right)
+6c_{2}+12c_{3}+8c_{4}-9.
\end{align}
Similarly, for the optimal point-weighting 17-point finite difference method \eqref{S1.eq.29}, we have,
\begin{align}\label{S2.eq.05}
\tilde{k}_{N}=\frac{1}{h}\sqrt{\frac{\tilde{N}}{\tilde{D}}},
\end{align}
with
\begin{align}
\label{S2.eq.06A}
\tilde{N} &=
2\eta\left(b_{1}-1 \right)P^2Q^2+\left( \left(2-2b_{1}\right)\eta -1 \right)P^2
-8\eta \left(b_{1}-1 \right)PQ
\notag \\ &
+8\left(\left(b_{1}-1\right)\eta +1\right)P
+\left(\left(1-2b_{1}\right)\eta +1\right)Q^2
+8\left( b_{1}\eta -1\right)Q-\eta -6b_{1}\eta,
\\
\label{S2.eq.07A}
\tilde{D}&=
\left(P^2-2P+Q^2-2Q+2\right)d_{2}
+2\left(2P^2Q^2-P^2-2PQ-Q^2+2\right)d_{3}-3.
\end{align}

The next proposition presents the error between the numerical wavenumbers ($k^{*}_{N}$ and $\tilde{k}_{N}$) and the exact wavenumber $k$ for the finite difference schemes \eqref{S1.eq.22} and \eqref{S1.eq.29}.

\begin{proposition}\label{S2.prop.2}
For optimal point-weighting 25-point and 17-point finite difference methods, respectively, there holds
\begin{align}
\label{S2.P01}
\left(k^{*}_{N}\right)^2&=k^{2}\left(1+O\left(k^{6}\,h^{6}\right)\right),&k\,h\to 0,
\\
\label{S2.P01A}
\left(\tilde{k}_{N}\right)^2&=k^{2}\left(1+O\left(k^{6}\,h^{6}\right)\right),&k\,h\to 0.
\end{align}
\end{proposition}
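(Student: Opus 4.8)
The plan is to work directly from the closed forms of the numerical wavenumbers, namely $\left(k^{*}_{N}\right)^2=\frac{1}{h^2}\,\frac{N^{*}}{D^{*}}$ and $\left(\tilde{k}_{N}\right)^2=\frac{1}{h^2}\,\frac{\tilde{N}}{\tilde{D}}$, where $N^{*},D^{*}$ and $\tilde{N},\tilde{D}$ are the explicit trigonometric polynomials in $P$ and $Q$ recorded in \eqref{S2.eq.06}--\eqref{S2.eq.07A}. Because $P=\cos(kh\cos\theta)$ and $Q=\cos(\gamma\,kh\sin\theta)$ depend on $h$ only through the single quantity $kh$, the whole question collapses to a one-variable asymptotic analysis as $kh\to 0$, with $\theta$, $\gamma$ and the free parameters $a_{1},c_{j}$ (respectively $b_{1},d_{j}$) held fixed. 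First I would substitute the even Taylor series $P=1-\tfrac12(kh\cos\theta)^2+\tfrac{1}{24}(kh\cos\theta)^4-\cdots$ and $Q=1-\tfrac12(\gamma\,kh\sin\theta)^2+\cdots$ into $N^{*}$ and $D^{*}$ and collect powers of $kh$; since $P$ and $Q$ are even, only even powers appear, so $N^{*}$ and $D^{*}$ are even power series in $kh$.

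The computation is organized around three structural facts, the first two of which are quick and the third of which is the heart of the matter. First, $N^{*}$ has no constant term: the statement $N^{*}\big|_{P=Q=1}=0$ is exactly that the discrete Laplacian $\mathcal{L}^{*}$ annihilates constants, so $N^{*}=N_{2}(kh)^2+N_{4}(kh)^4+\cdots$, while $D^{*}$ tends to a nonzero constant $D_{0}$. Second, leading-order consistency fixes the normalization $N_{2}/D_{0}=1$, which is precisely what forces $\left(k^{*}_{N}\right)^2\to k^2$ as $kh\to 0$. Third, the fourth-order accuracy proved in Proposition \ref{S1.P01} controls which intermediate coefficients vanish: because $\mathcal{L}^{*}$ approximates the Laplacian to fourth order, the departure of $N^{*}$ from its leading term is pushed to higher order in $kh$, and because the mass stencil $I^{*}$ (respectively $\tilde{I}$) is a fourth-order approximation of the identity for \emph{any} parameters with $\sum_{j}c_{j}=1$ (respectively $\sum_{j}d_{j}=1$), the correction in $D^{*}$ is likewise suppressed at the first nontrivial order. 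I would make this quantitative by specializing the $O(h^4)$ truncation terms $\zeta=\zeta_{1}+\zeta_{2}$ and $\xi=\xi_{1}+\xi_{2}$ from \eqref{S1.P01.05}--\eqref{S1.P01.08} to the interior case $A=B=C=1$ with constant $k$; for the plane wave each reduces to a fixed combination of sixth-order derivatives, hence is $O(k^{6})$, which pins down the first surviving coefficients of $N^{*}$ and $D^{*}$.

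With the series of $N^{*}$ and $D^{*}$ in hand I would form $\dfrac{N^{*}}{(kh)^2\,D^{*}}=\dfrac{\left(k^{*}_{N}\right)^2}{k^{2}}$, expand the denominator as a geometric series, and read off the relative error term by term: the normalization removes the $O(1)$ discrepancy and the fourth-order cancellations remove the lowest correction, so the first surviving correction is the one asserted in \eqref{S2.P01}. The argument for the 17-point scheme is identical, working from \eqref{S2.eq.06A}--\eqref{S2.eq.07A} with $b_{1}$ in place of $a_{1}$, $d_{j}$ in place of $c_{j}$, and the constraint $\sum_{j}d_{j}=1$, and yields \eqref{S2.P01A}. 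The main obstacle is exactly the bookkeeping at the crossover order: one must verify that the contribution of the Laplacian truncation (entering through $N^{*}$) and that of the mass truncation (entering through $D^{*}$) combine so that the leading relative-error coefficient collapses to the stated size rather than persisting at a lower power of $kh$. This is the step where the precise numerical weights of the point-weighting averages \eqref{S1.eq.11}--\eqref{S1.eq.12} and \eqref{S1.eq.23}--\eqref{S1.eq.24}, together with the parameter constraints, genuinely enter; it is where I would concentrate the careful algebra, and where I would cross-check the symbolic expansion of $N^{*}/D^{*}$ and $\tilde{N}/\tilde{D}$ with a computer algebra system before trusting the final order.
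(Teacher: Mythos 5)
Your proposal is essentially the paper's own proof: the paper likewise sets $\tau=kh$, writes $\left(k^{*}_{N}\right)^2=\frac{1}{h^2}\frac{N^{*}(\tau)}{D^{*}(\tau)}$ and $\left(\tilde{k}_{N}\right)^2=\frac{1}{h^2}\frac{\tilde{N}(\tau)}{\tilde{D}(\tau)}$, Taylor-expands $N^{*}(\tau)$ and $1/D^{*}(\tau)$ (respectively $\tilde{N}$, $1/\tilde{D}$) at $\tau=0$, and multiplies the two series exactly as you describe; your appeal to Proposition \ref{S1.P01} to predict which intermediate coefficients vanish is only a packaging difference, since the paper verifies the same cancellations by direct expansion in \eqref{S2.P01.04}--\eqref{S2.P01.05A}. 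The one point to correct is the crossover order you yourself flagged as delicate: the expansion this method yields --- the paper's own \eqref{S2.P01.06} and \eqref{S2.P01.06A} --- has leading correction $-\frac{k^6h^4}{180}\left(\cdots\right)$, a \emph{relative} error of size $(kh)^4$ that does not vanish for generic parameters and $\theta$ (it is precisely the pollution term the paper discusses immediately after the proposition), so what the computation actually establishes is $\left(k^{*}_{N}\right)^2=k^{2}\left(1+O\left(k^{4}h^{4}\right)\right)$; the stated $O\left(k^{6}h^{6}\right)$ follows neither from your argument nor from the paper's (its concluding ``Thus'' is a non sequitur from \eqref{S2.P01.06}), so your CAS cross-check will confirm the expansion but not the literal claim.
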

\begin{proof}
Letting $\tau = kh$, $P(\tau)=\cos(\tau\cos\theta )$ and $Q(\tau)=\cos(\gamma\tau\sin\theta )$, equation \eqref{S2.eq.05} can be expressed as follows,
\begin{align}\label{S2.P01.01}
\left(k^{*}_{N}\right)^2=\frac{1}{h^2}\frac{N^{*}(\tau)}{D^{*}(\tau)},
\end{align}
where
\begin{align*}
N^{*}(\tau)&=
2\left(1-a_{1}\right)\left[
\left(\gamma^2+1\right)P^2Q^2
-2\left( \gamma^2+4 \right)P^2Q
+2\left(4\gamma^2+1 \right)PQ\left(8-Q\right)\right]
\notag \\ &
+2\gamma^2\left(2a_{1}+1\right)P\left(4-P\right)
+14\gamma^2\left(1-a_{1}\right)Q\left(Q-2\right)
+14\left(1-a_{1}\right)\left(P-2\right)
\notag \\ &
+\left(2a_{1}+1\right)Q\left(8-Q\right)
-7 \left(\gamma^2+1 \right)\left( 2a_{1}+1\right)
,\\
D^{*}(\tau)&=
4\gamma^2\left(3c_{3}-c_{4}\right)P^2Q^2
+8c_{4}\gamma^2\left(P^2Q+PQ^2\right)
+\gamma^2\left(3c_{2}-6c_{3}+2c_{4}\right)\left(P^2+Q^2\right)
\notag \\ &
-2\gamma^2\left(3c_{2}+2c_{4}\right)\left(P+Q\right)
-4\gamma^2\left(3c_{3}+4c_{4}\right)PQ
+\gamma^2\left(6c_{2}+12c_{3}+8c_{4}-9\right).
\end{align*}
Applying Taylor theorem on $N^{*}(\tau)$ and $\frac{1}{D^{*}(\tau)}$ at point $\tau = 0$, we have
\begin{align}
\label{S2.P01.04}
N^{*}(\tau)&=
-9\gamma^2\tau ^2
+
\frac{\gamma^2\tau ^6}{10}
\left(
3\left(4-5a_{1}\right)\sin^2\left(\theta \right)
+\left(15\left(1-a_{1}\right)\gamma^4+30a_{1}-27\right)\sin^4\left(\theta \right)
\right. \notag \\ & \left.
+\left(\gamma^2-1 \right)\left( \gamma^2+1 \right)\left( 15a_{1}-14\right)\sin^6\left(\theta \right)+1
\right)
+O(\tau^8)
,
\\
\label{S2.P01.05}
\frac{1}{D^{*}(\tau)}&=
-\frac{1}{9\gamma^2}
-\frac{\tau ^4}{108\gamma^2}\left(
\gamma^4\left(c_{2}+2c_{3}+2c_{4}\right)\sin^4\left(\theta \right)
+12c_{3}\gamma^2\sin^2\left(\theta \right)\cos^2\left(\theta \right)
\right. \notag \\ & \left.
+\left(c_{2}+2c_{3}+2c_{4}\right){\cos\left(\theta \right)}^4
\right)
+O(\tau^6).
\end{align}
It follows from \eqref{S2.P01.01}, \eqref{S2.P01.04} and \eqref{S2.P01.05} that
\begin{align}
\label{S2.P01.06}
\left(k^{*}_{N}\right)^2&=k^{2}
-\frac{k^6h^4}{180}
\left(
2\left(\gamma^2-1\right)\left( \gamma^2+1 \right)\left( 15a_{1}-14\right)\sin^6\left(\theta \right)
+\left(
6 \left( 30c_{3}\gamma^2+15a_{1}-14\right)
\right.\right. \notag \\ & \left.
+15\left(g^4+1\right)\left(2-c_{2}-2c_{3}-2c_{4}-2a_{1}\right)
\right)\sin^4\left(\theta \right)
+2-15 \left( c_{2}+2c_{3}+2c_{4}\right)
\notag \\ & \left.
-6\left(
5 \left(6c_{3}\gamma^2+a_{1}-c_{2}-2c_{3}-2c_{4}\right)
-4\right)\sin^2\left(\theta \right)
\right)+O\left(k^{8}h^{6}\right),\quad kh\to 0.
\end{align}
Thus,
\begin{align}
\left(k^{*}_{N}\right)^2&=k^{2}\left(1+O\left(k^{6}\,h^{6}\right)\right),&k\,h\to 0
\end{align}
Similarly, for the optimal point-weighting 17-point finite difference method
 \eqref{S1.eq.29}, we have,
\begin{align}\label{S2.P01.01A}
\left(\tilde{k}_{N}\right)^2=\frac{1}{h^2}\frac{\tilde{N}(\tau)}{\tilde{D}(\tau)},
\end{align}
where
\begin{align*}
\tilde{N}(\tau)&=
2\left(\gamma^2+1 \right)\left(  b_{1}-1\right)P^2Q^2
+\left(\left(1-2b_{1}\right)\gamma^2+2-2b_{1}\right)P^2
-8\left( \gamma^2+1 \right)\left( b_{1}-1 \right)PQ
\\ &
+8\left(\left(\gamma^2+1\right)b_{1}-1\right)P
+8\left(\left(\gamma^2+1\right)b_{1}-\gamma^2 \right)Q^2
+8\left(\left(\gamma^2+1\right)b_{1}-\gamma^2 \right)Q
\\ &
-6b_{1}\left(\gamma^2+1\right)-\gamma^2-1
,\\
\tilde{D}(\tau)&=
\left(\gamma^2\left(P^2-2P+Q^2-2Q+2\right)\right)d_{2}
+2\gamma^2\left(2P^2Q^2-P^2-2PQ-Q^2+2\right)d_{3}-3\gamma^2.
\end{align*}
Applying Taylor theorem at point $\tau =0$,
\begin{align}
\label{S2.P01.04A}
\tilde{N}(\tau)&=
-3\gamma^2\tau ^2
+
\frac{\gamma^2\tau ^6}{30}
\left(
(\gamma-1)( \gamma+1)( \gamma^2+1) \left( 15b_{1}-14\right){\sin^6\left(\theta \right)}
\right. \notag \\ &
-3\left(\left(5b_{1}-5\right)\gamma^4+\left(5-5b_{1}\right)\gamma^2+9-10b_{1}\right){\sin^4\left(\theta \right)}
\notag \\ & \left.
-3\left(\left(5b_{1}-5\right)\gamma^2+5b_{1}-4\right){\sin^2\left(\theta \right)}+1
\right)
+O(\tau^8)
,
\\
\label{S2.P01.05A}
\frac{1}{\tilde{D}(\tau)}&=
-\frac{1}{3\gamma^2}+
\frac{\tau ^4}{36\gamma^2}\left(
\gamma^4\left(d_{2}+2d_{3}\right){\sin^4\left(\theta \right)}
+12d_{3}\gamma^2{\sin^2\left(\theta \right)}{\cos^2\left(\theta \right)}
\right. \notag \\ & \left.
+\left(d_{2}+2d_{3}\right){\cos^4\left(\theta \right)}
\right)
+O(\tau^6).
\end{align}
It follows from \eqref{S2.P01.01A}, \eqref{S2.P01.04A} and \eqref{S2.P01.05A} that
\begin{align}
\label{S2.P01.06A}
\left(\tilde{k}_{N}\right)^2&=k^{2}
-\frac{k^6h^4}{180}
\left(2-15\left(d_{2}+2d_{3} \right)+
2\left(g-1 \right)\left( g+1 \right)\left( \gamma^2+1 \right)\left( 15b_{1}-14 \right){\sin^6\left(\theta \right)}
\right. \notag \\ &
-3\left(
5\left(2b_{1}+d_{2}+2d_{3}-2\right)\gamma^4
-10\left(b_{1}+6d_{3}-1 \right)\gamma^2
-5\left(4b_{1}-d_{2}-2d_{3} \right)+18\right){\sin^4\left(\theta \right)}
\notag \\ & \left.
-6\left(5\left(b_{1}+6d_{3}-1\right)\gamma^2+5\left(b_{1}-d_{2}-2d_{3} \right)-4\right){\sin^2\left(\theta \right)}
\right)+O\left(k^{8}h^{6}\right),\quad kh\to 0.
\end{align}
Therefore,
\begin{align}
\left(\tilde{k}_{N}\right)^2&=k^{2}\left(1+O\left(k^{6}\,h^{6}\right)\right),&k\,h\to 0.
\end{align}
This completes the proof.
\end{proof}
The above proposition indicates that $k^*_{N}$ and $\tilde{k}_{N}$ approximate $k$ with fourth-order accuracy. Moreover, the terms associated with $k^6\,h^4$ (in equations \eqref{S2.P01.06} and \eqref{S2.P01.06A}) presents the pollution effect, which depends on the wavenumber $k$, the parameters of the finite difference formula and the wave’s propagation angle $\theta$ from the z-axis.

Based on minimizing the numerical dispersion, we next introduce how to choose the weights $a_1$, $c_1$, \ldots, $c_4$ and $b_1$, \ldots, $d_3$ for optimal point-weighting 25-point and 17-point finite difference methods, respectively.

Let $k_{N}$ the numerical wavenumber (either $k^*_{N}$ or $\tilde{k}_{N}$). Similarly, let $N$ represent either $N^{*}$ or $\tilde{N}$, and, similarly, let $D$  represent either $D^{*}$ or $\tilde{D}$. Since $h =2\pi/Gk$, the relationship of the numerical wavenumber $k_{N}$ and the exact wavenumber $k$ can be presented as follows,
\begin{align}\label{S2.eq.08}
\frac{k_{N}}{k}=\frac{G}{2\pi}\sqrt{\frac{N}{D}}.
\end{align}

Furthermore, the normalized numerical phase velocity and the normalized numerical group velocity can be found as follows, respectively \cite{jo1996optimal,shin1998frequency,trefethen1982group,chen2013optimal}.
\begin{align}
\label{S2.eq.09}
\frac{V^{N}_{ph}}{v}&=\frac{G}{2\pi}\sqrt{\frac{N}{D}},
\\
\label{S2.eq.10}
\frac{V^{N}_{gr}}{v}&=\frac{v}{V^{N}_{ph}}\left[
\frac{\left(\frac{1}{h}\frac{\partial N}{\partial k}\right)D-N\left(\frac{1}{h}\frac{\partial D}{\partial k}\right)}{D^2}
\right].
\end{align}

As can be seen, there would be no numerical dispersion if the normalized numerical phase velocity equals to one. Therefore, to minimize the error between $k_{N}$ and $k$, we need to estimate the parameters of optimal point-weighting 17-point and 25-point finite difference methods in a way that the normalized numerical phase velocity can take a value close to one.

Hence, the following functionals can be considered for minimizing the numerical dispersion of finite difference schemes \eqref{S1.eq.22} and \eqref{S1.eq.29}, respectively,
\begin{align}
\label{S2.eq.11}
J^{*}(a_1,\ldots,c_4;G,\theta)&=\frac{G}{2\pi}\sqrt{\frac{N^{*}}{D^{*}}}-1,\\
\label{S2.eq.11A}
\tilde{J}(b_1,\ldots,d_3;G,\theta)&=\frac{G}{2\pi}\sqrt{\frac{\tilde{N}}{\tilde{D}}}-1.
\end{align}
where $b_1 \in (0,1]$, and $c_2,c_3,c_4\in \mathbb{R}$, and $(G,\theta) \in I_G \times I_\theta$ with $I_G$ and $I_\theta$ are two intervals.
In general, one can choose $I_\theta = \left[0,\dfrac{\pi}{2}\right]$ and $I_G= [G_{\min},~G_{\max}] \subseteq [2,~400]$. We remark that the interval $\left[0,\pi/2\right]$ can be replaced by $\left[0,~\pi/4\right]$
because of the symmetry, and $G_{\min} \geq  2$ based on the Nyquist sampling limit (see \cite{shin1998frequency} for more details).

Furthermore, minimizing the numerical dispersion of optimal point-weighting 25-point and 17-point finite difference methods is equivalent to minimizing functionals \eqref{S2.eq.11} and \eqref{S2.eq.11A}, respectively. In doing so, we can use the least-squares method to estimate the optimal parameters of finite difference schemes \eqref{S1.eq.22} and \eqref{S1.eq.29}. Therefore, it follows from $J^{*}(a_1,\ldots,c_4;G,\theta)=0$ and $\tilde{J}(b_1,\ldots,d_3;G,\theta)=0$ that

\begin{align}\label{S2.eq.13}
\begin{cases}
\dfrac{G^2}{4\pi^2}\dfrac{N^{*}}{D^{*}}=1,
\\ \vspace{-0.4cm} \\
\dfrac{G^2}{4\pi^2}\dfrac{\tilde{N}}{\tilde{D}}=1.
\end{cases}
\end{align}
In other words,
\begin{align}\label{S2.eq.14}
&
\left(
\left( Q-1 \right)\left( Q-7 \right)\left( 2\,P^2-4\,P-1 \right)\,\eta
3\left( P-Q \right)\left( 4\,P\,Q-5\,Q-5\,P+12 \right)
\right)G^2
\notag \\ &
+36\pi ^2
-2\left( Q-1 \right)\left( P-1 \right)\left( \eta PQ+\left(6-7\eta \right)P+\left(-\eta -6\right)Q+7\eta\right)
a_{1}
\notag \\ &
-12 \pi^2\left( P^2-2P+Q^2-2Q+2 \right)c_{2}
-24\pi^2\left( 2P^2Q^2-P^2-2PQ-Q^2+2\right)c_{3}
\notag \\ &
+
8\pi^2\left(2P^2Q^2-4P^2Q-P^2-4PQ^2+8PQ+2P-Q^2+2Q-4 \right)c_{4}
=0,
\end{align}
and
\begin{align}\label{S2.eq.14A}
&
-2 \eta \left(Q-1 \right)\left( P-1 \right)\left( P+Q+PQ-3 \right)G^2b_{1}
+4\pi^2\left(P^2-2P+Q^2-2Q+2\right)d_{2}
\notag \\ &
+\left(\left( Q-1 \right)\left( 2P^2Q-Q-8P+2P^2-1 \right)\eta+\left( P-Q \right)\left( P+Q-8 \right)\right)G^2-12\pi ^2
\notag \\ &
+8\pi^2\left( 2P^2Q^2-P^2-2PQ-Q^2+2 \right)d_{3}=0
\end{align}
Let
\begin{align*}
\begin{cases}
\theta=\theta_m=\dfrac{(m-1)}{4(l-1)}\pi \in I_\theta=\left[0,\dfrac{\pi}{2}\right],&m=1,~2,~\ldots,l,
\\ \vspace{-0.7cm} \\
\dfrac{1}{G}=\dfrac{1}{G_n}=\dfrac{1}{G_{\max}}+(n-1)\dfrac{\frac{1}{G_{\min}}-\frac{1}{G_{\max}}}{r-1}\in \left[\dfrac{1}{G_{\max}},\dfrac{1}{G_{\min}}\right],
&n = 1,~2,~\ldots,r.
\end{cases}
\end{align*}
Equations \eqref{S2.eq.14} and \eqref{S2.eq.14A} can be expressed as the following linear systems, respectively,
\begin{align}\label{S2.eq.15}
\begin{bsmallmatrix}
S_{1,1}^1& S_{1,1}^2& S_{1,1}^3& S_{1,1}^4 \\
\vdots& \vdots& \vdots& \vdots \\
S_{1,r}^1& S_{1,r}^2& S_{1,r}^3& S_{1,r}^4 \\
\vdots& \vdots& \vdots& \vdots \\
S_{m,n}^1& S_{m,n}^2& S_{m,n}^3& S_{m,n}^4 \\
\vdots& \vdots& \vdots& \vdots \\
S_{l,r}^1& S_{l,r}^2& S_{l,r}^3& S_{l,r}^4 \\
\end{bsmallmatrix}
\begin{bsmallmatrix}
a_1\\
c_2\\
c_3\\
c_4
\end{bsmallmatrix}
=
\begin{bsmallmatrix}
S_{1,1}^5 \\
\vdots \\
S_{1,r}^5 \\
\vdots \\
S_{m,n}^5 \\
\vdots \\
S_{l,r}^5 \\
\end{bsmallmatrix},
\end{align}
and
\begin{align}\label{S2.eq.15A}
\begin{bsmallmatrix}
W_{1,1}^1& W_{1,1}^2& W_{1,1}^3 \\
\vdots& \vdots& \vdots \\
W_{1,r}^1& W_{1,r}^2& W_{1,r}^3 \\
\vdots& \vdots& \vdots \\
W_{m,n}^1& W_{m,n}^2& W_{m,n}^3 \\
\vdots& \vdots& \vdots \\
W_{l,r}^1& W_{l,r}^2& W_{l,r}^3 \\
\end{bsmallmatrix}
\begin{bsmallmatrix}
b_1\\
d_2\\
d_3\\
\end{bsmallmatrix}
\begin{bsmallmatrix}
W_{1,1}^4 \\
\vdots \\
W_{1,r}^4 \\
\vdots \\
W_{m,n}^4 \\
\vdots \\
W_{l,r}^4 \\
\end{bsmallmatrix},
\end{align}
where
\begin{align}\label{S2.eq.16}
&
\begin{cases}
S_{m,n}^{1}&=
-2G_{n}^2\left(Q_{m,n}-1 \right)\left( P_{m,n}-1 \right)\left(\eta\left(P_{m,n}-1\right)\left(Q_{m,n} -7\right)
\right. \\ & \left.
+ 6\left(P_{m,n}-Q_{m,n}\right)\right)
,\\
S_{m,n}^{2}&=
-12 \pi^2\left(P_{m,n}^2+Q_{m,n}^2 -2 \left( P_{m,n}+Q_{m,n}-1 \right)\right)
,\\
S_{m,n}^{3}&=
-24\pi^2\left(\left(2P_{m,n}^2-1\right)Q_{m,n}^2-2 \left(P_{m,n}Q_{m,n}-1\right)-P_{m,n}^2\right)
,\\
S_{m,n}^{4}&=
8\pi^2\left(\left(2P_{m,n}^2-4P_{m,n}-1\right)Q_{m,n}\left(Q_{m,n}-2\right)-P_{m,n}^2+2P_{m,n}-4\right) )
,\\
S_{m,n}^{5}&=
-\left(\left(Q_{m,n}-1\right)\left( Q_{m,n}-7 \right)\left( 2P_{m,n}^2-4P_{m,n}-1 \right)\eta
\right. \\ & \left.
+3\left(P_{m,n}-Q_{m,n} \right)\left(\left(4P_{m,n}-5\right)Q_{m,n}-5P_{m,n}+12\right)\right)G_{n}^2.
\end{cases}
\\
&
\label{S2.eq.16A}
\begin{cases}
W_{m,n}^{1}&=
-2 \eta \left(Q_{m,n}-1 \right)\left( P_{m,n}-1 \right)\left( P_{m,n}+Q_{m,n}+P_{m,n}Q_{m,n}-3 \right)G_{n}^2
,\\
W_{m,n}^{2}&=
4\pi^2\left(P_{m,n}^2-2P_{m,n}+Q_{m,n}^2-2Q_{m,n}+2\right)
,\\
W_{m,n}^{3}&=
8\pi^2\left( 2P_{m,n}^2Q_{m,n}^2-P_{m,n}^2-2P_{m,n}Q_{m,n}-Q_{m,n}^2+2 \right)
,\\
W_{m,n}^{4}&=
\left(\left( Q_{m,n}-1 \right)\left( 2P_{m,n}^2Q_{m,n}-Q_{m,n}-8P_{m,n}+2P_{m,n}^2-1 \right)\eta
\right. \\ & \left.
+\left( P_{m,n}-Q_{m,n} \right)\left( P_{m,n}+Q_{m,n}-8 \right)\right)G_{n}^2-12\pi ^2.
\end{cases}
\end{align}
with
\begin{align}\label{S2.eq.17}
P_{m,n}=\cos\left(\frac{2\pi}{G_{n}}\cos\left(\theta _{m}\right)\right), \quad
Q_{m,n}=\cos\left(\gamma \frac{2\pi}{G_{n}} \sin\left(\theta _{m}\right)\right).
\end{align}

We next present an algorithm for parameter selection and reducing the numerical dispersion and improve the accuracy of optimal point-weighting 17-point and 25-point finite difference methods. The optimal algorithm is based on the refined choice strategy (rule 3.8 from \cite{chen2013optimal}). According to the rule, first, the interval $I_G = \left[G_{min},~G_{max} \right]$ is estimated by using a priori information. For example, for a given step size of $h$, $I_G$ can be considered as follows,
\begin{align}\label{S2.eq.18}
I_G= \left[\frac{v_{\min}}{hf_{\max}},\frac{v_{\max}}{hf_{\min}}\right],
\end{align}
where $f \in \left[f_{\min},~f_{\max}\right]$ and $v \in \left[v_{\min},~v_{\max}\right]$ are the frequency and the velocity, respectively.
Then, the parameters of optimal point-weighting 25-point and 17-point finite difference methods, respectively, are estimated such that
\begin{align}
\label{S2.eq.19}
(a_1,c_2,c_3,c_4) &=
arg \min\left\{\|J^{*}(.,\ldots,.;G,\theta)\|_{I_G \times I_\theta}:~a_1 \in (0,1],~c_2,c_3,c_4\in \mathbb{R} \right\},
\\
\label{S2.eq.19A}
(b_1,d_2,d_3) &=
arg \min\left\{\|\tilde{J}(.,\ldots,.;G,\theta)\|_{I_G \times I_\theta}:~b_1 \in (0,1],~d_2,d_3\in \mathbb{R} \right\}.
\end{align}

In the remaining of this article, we refer optimal point-weighting 25-point and 17-point finite difference methods whose parameters are estimated using the refined choice strategy as the refined 25-point finite difference scheme (refined PW 25p) and the refined 17-point finite difference scheme (refined PW 17p), respectively. Moreover, As a summary consider algorithms \ref{Alg01} and \ref{Alg01A} for optimal parameters selection for refined PW 25p and refined PW 17p, respectively.

\begin{algorithm}[htp]
 \KwData{$v$, $f$ and $h$}
 \KwResult{$a_1$, $c_2$, $c_3$ and $c_4$}
 Identify $I_G$ using \eqref{S2.eq.18}\\
 Solve the least square problem \eqref{S2.eq.15} for $a_{1},\ldots,c_4$\;
 \caption{Optimal parameters selection for refined PW 25p}\label{Alg01}
\end{algorithm}
\begin{algorithm}[htp]
 \KwData{$v$, $f$ and $h$}
 \KwResult{$b_1$, $d_2$ and $d_3$}
 Identify $I_G$ using \eqref{S2.eq.18}\\
 Solve the least square problem \eqref{S2.eq.15A} for $a_{1},\ldots,c_4$\;
 \caption{Optimal parameters selection for refined PW 17p}\label{Alg01A}
\end{algorithm}

The normalized phase and group velocity curves of refined PW 25p and refined PW 17p are presented in the figures \ref{S2.fig.01} and \ref{S2.fig.02}, respectively. The optimal parameters are estimated on $I_G$ intervals $[2,2.5]$, $[2.5,3]$ $[4,5]$, $[5,6]$, $[6,8]$, $[8,10]$, and $[10,400]$. What can be seen from the figures is the these two schemes have pretty similar normalized phase velocity and normalized group velocity curves for various values of $\gamma$.

\begin{figure}[htp]
\centering
\subfloat[refined PW 25p ($\gamma=0.25$)]{\includegraphics[width=0.35\textwidth]{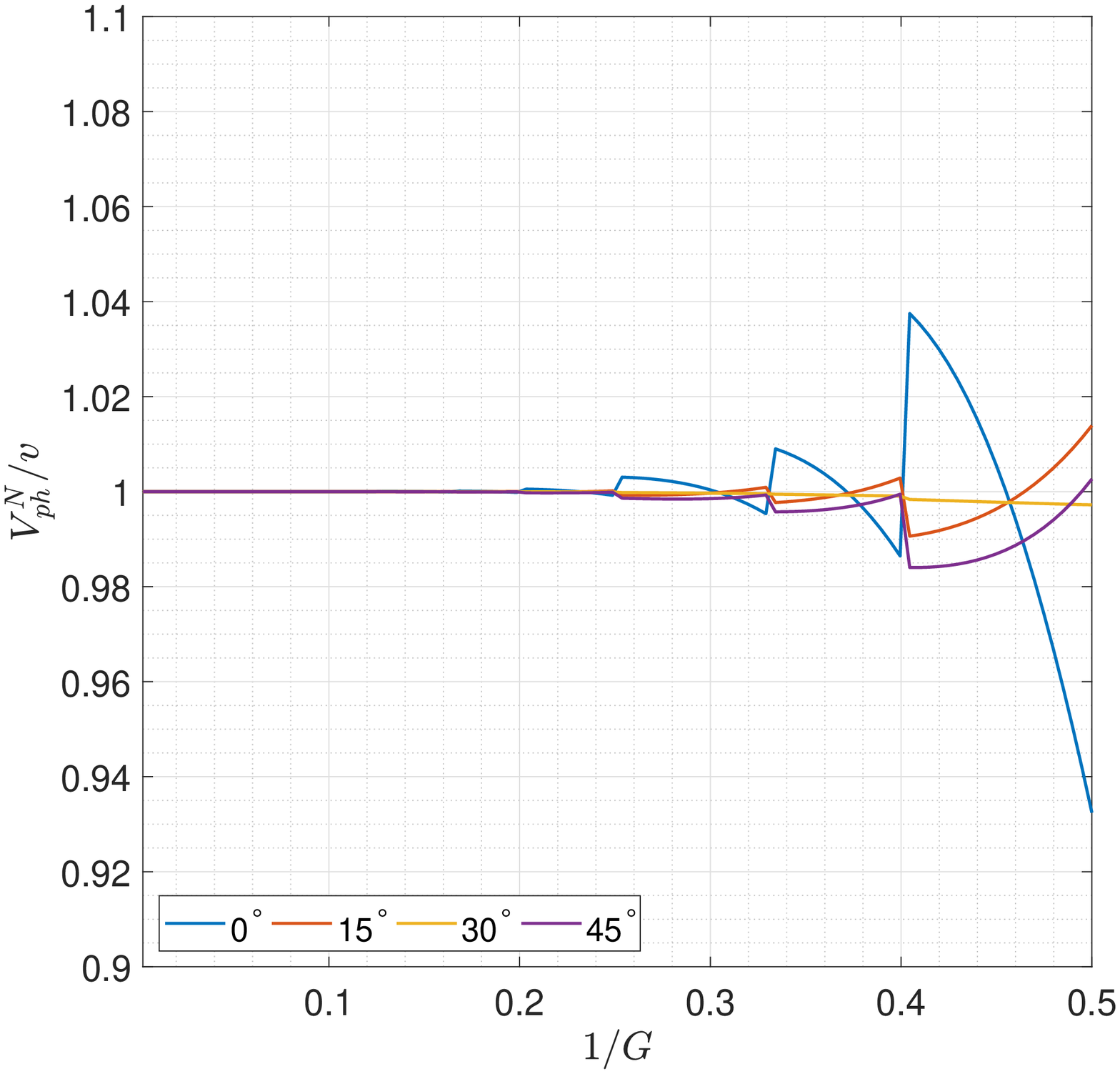}\label{S2.fig.01a}}
~
\subfloat[refined PW 25p ($\gamma=0.50$)]{\includegraphics[width=0.35\textwidth]{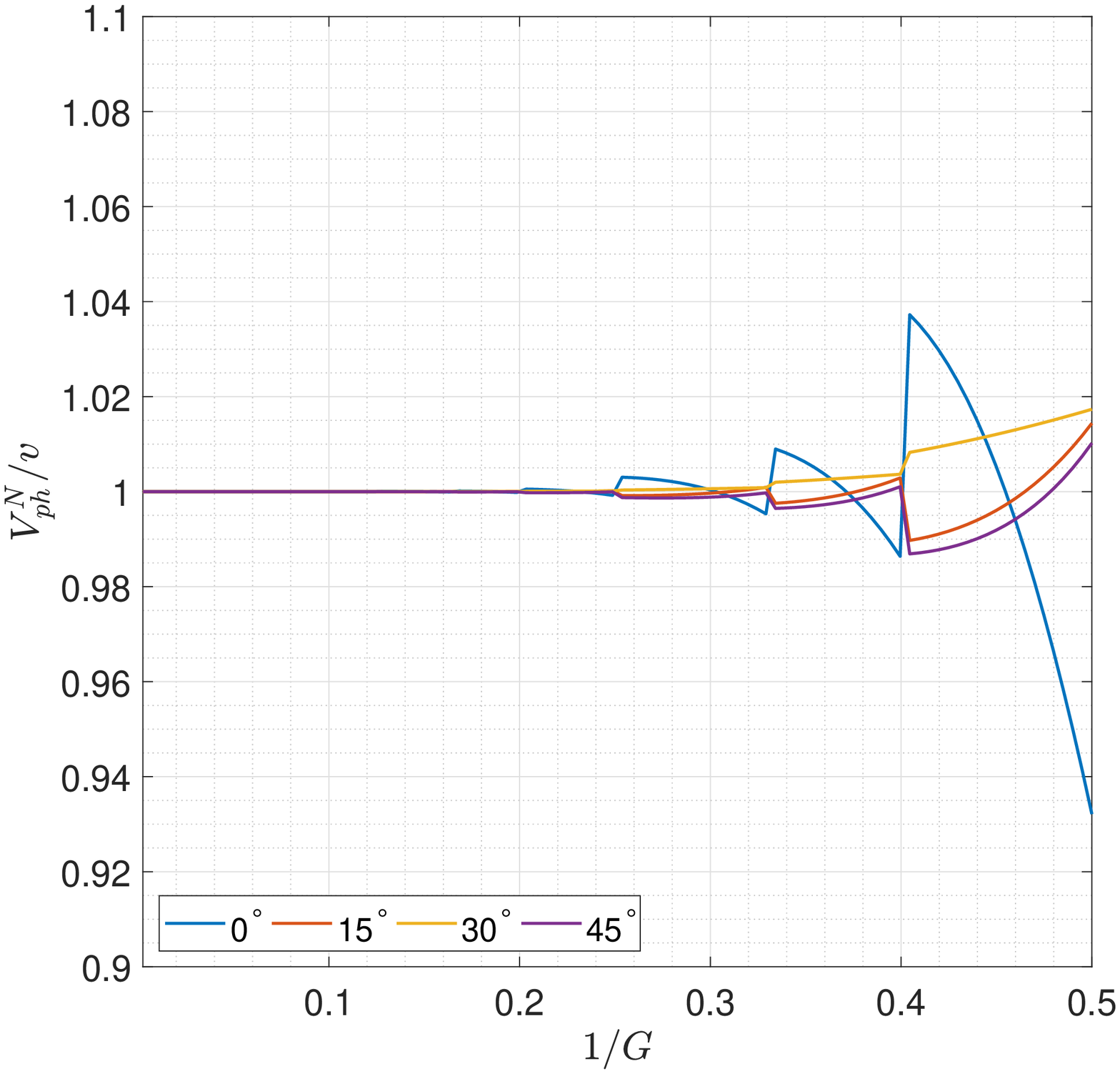}\label{S2.fig.01b}}
~
\subfloat[refined PW 25p ($\gamma=0.75$)]{\includegraphics[width=0.35\textwidth]{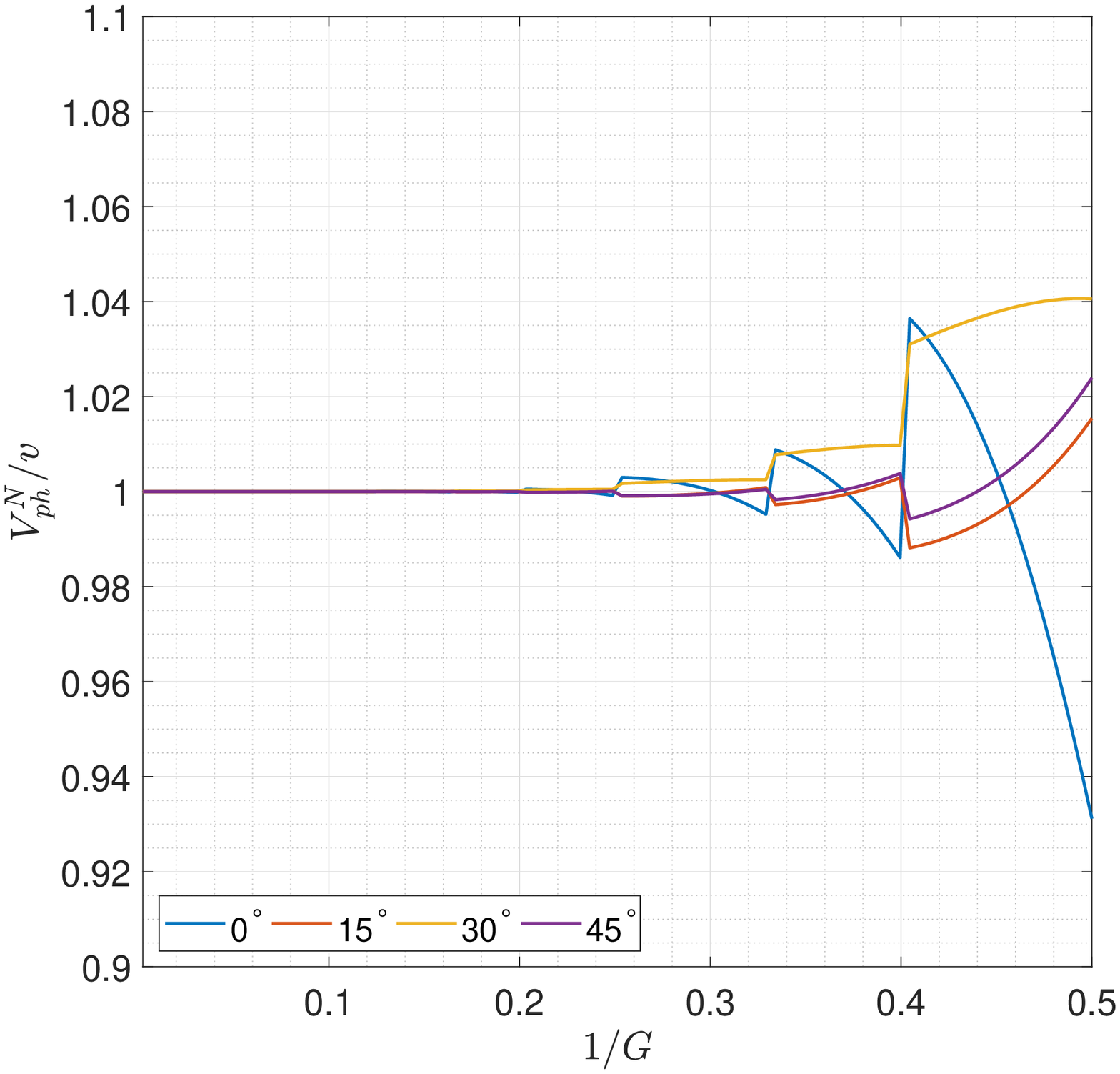}\label{S2.fig.01c}}

\subfloat[refined PW 25p ($\gamma=1.00$)]{\includegraphics[width=0.35\textwidth]{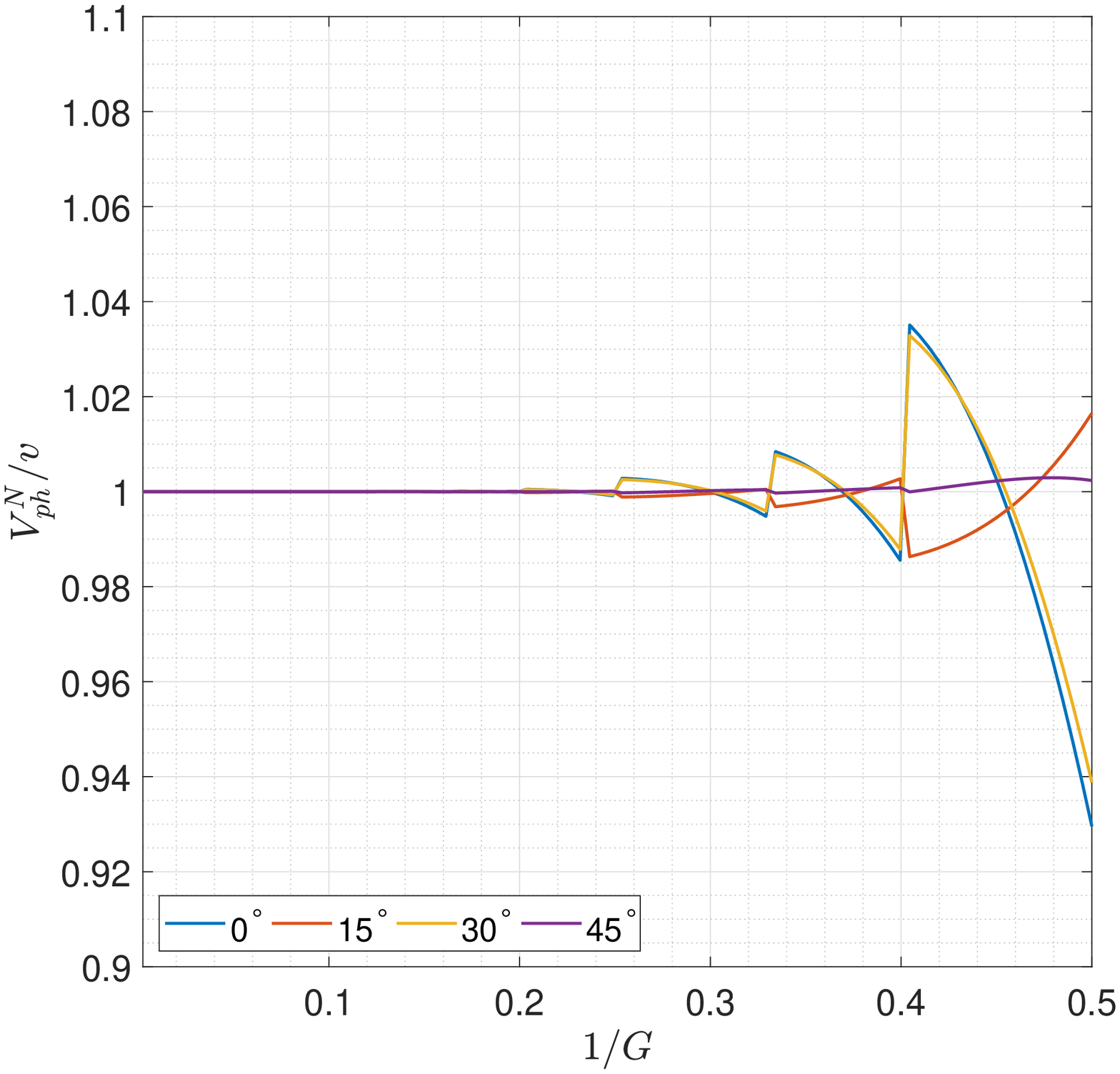}\label{S2.fig.01d}}
~
\subfloat[refined PW 17p ($\gamma=0.25$)]{\includegraphics[width=0.35\textwidth]{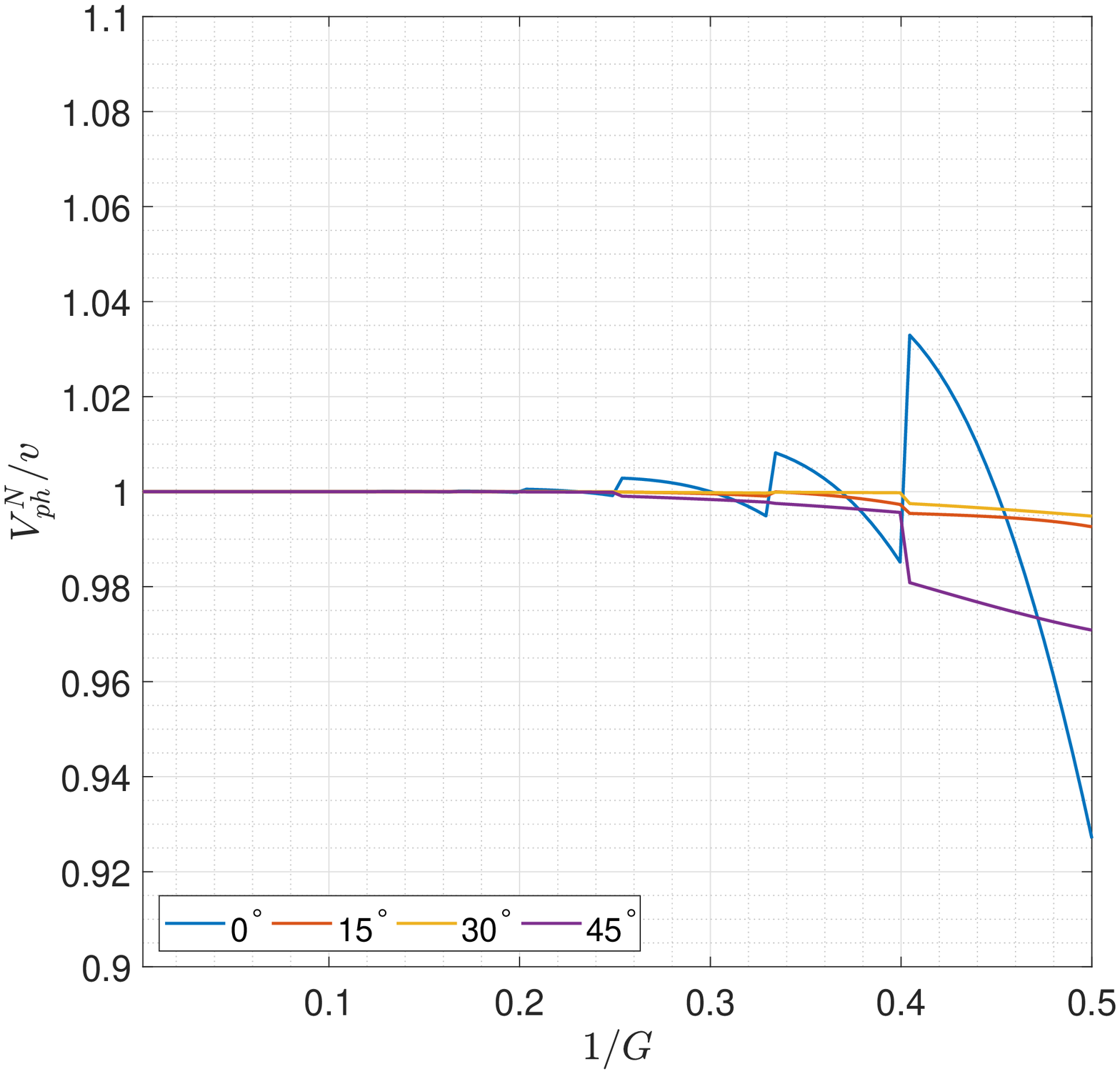}\label{S2.fig.01e}}
~
\subfloat[refined PW 17p ($\gamma=0.50$)]{\includegraphics[width=0.35\textwidth]{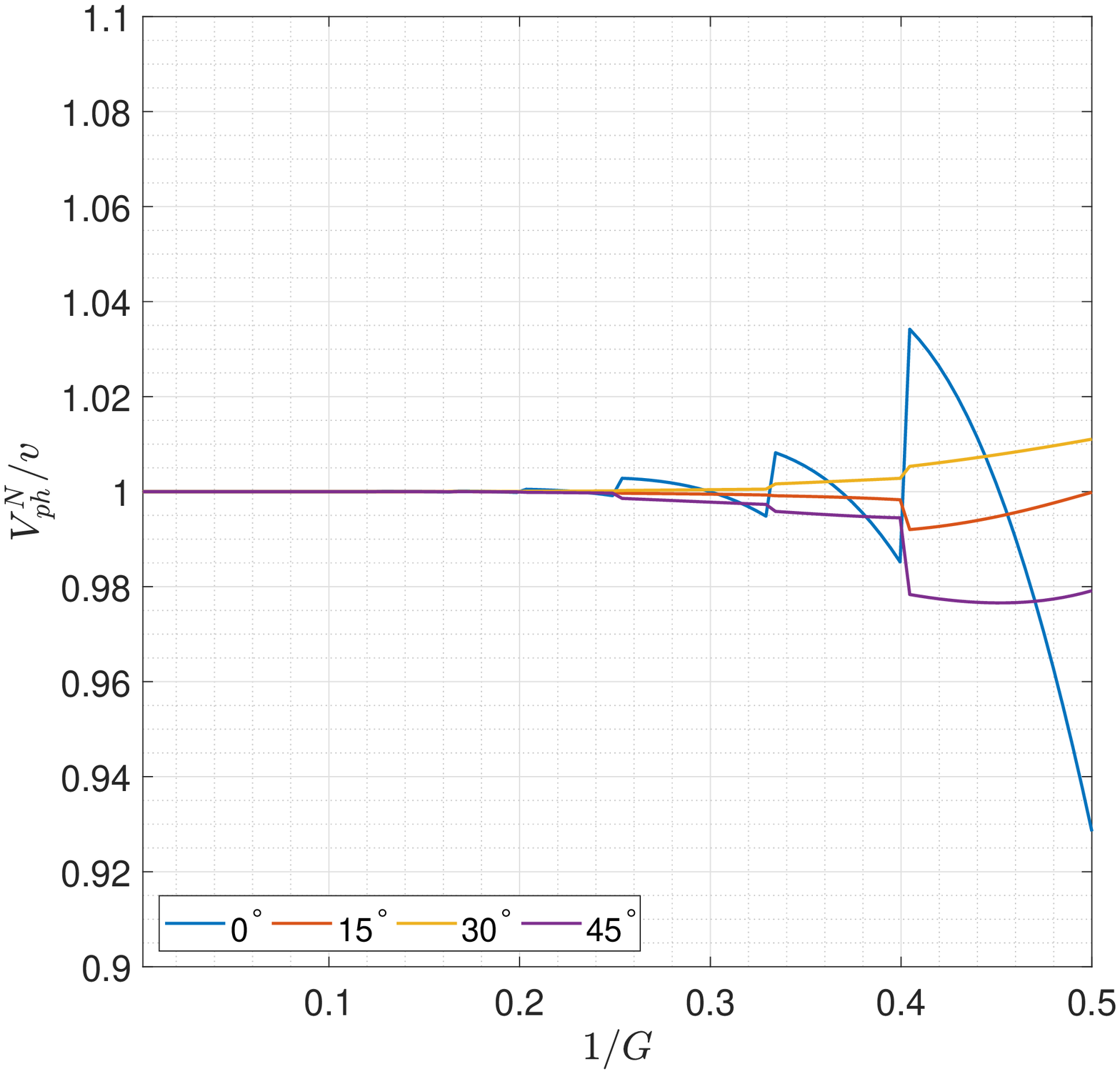}\label{S2.fig.01f}}

\subfloat[refined PW 17p ($\gamma=0.75$)]{\includegraphics[width=0.35\textwidth]{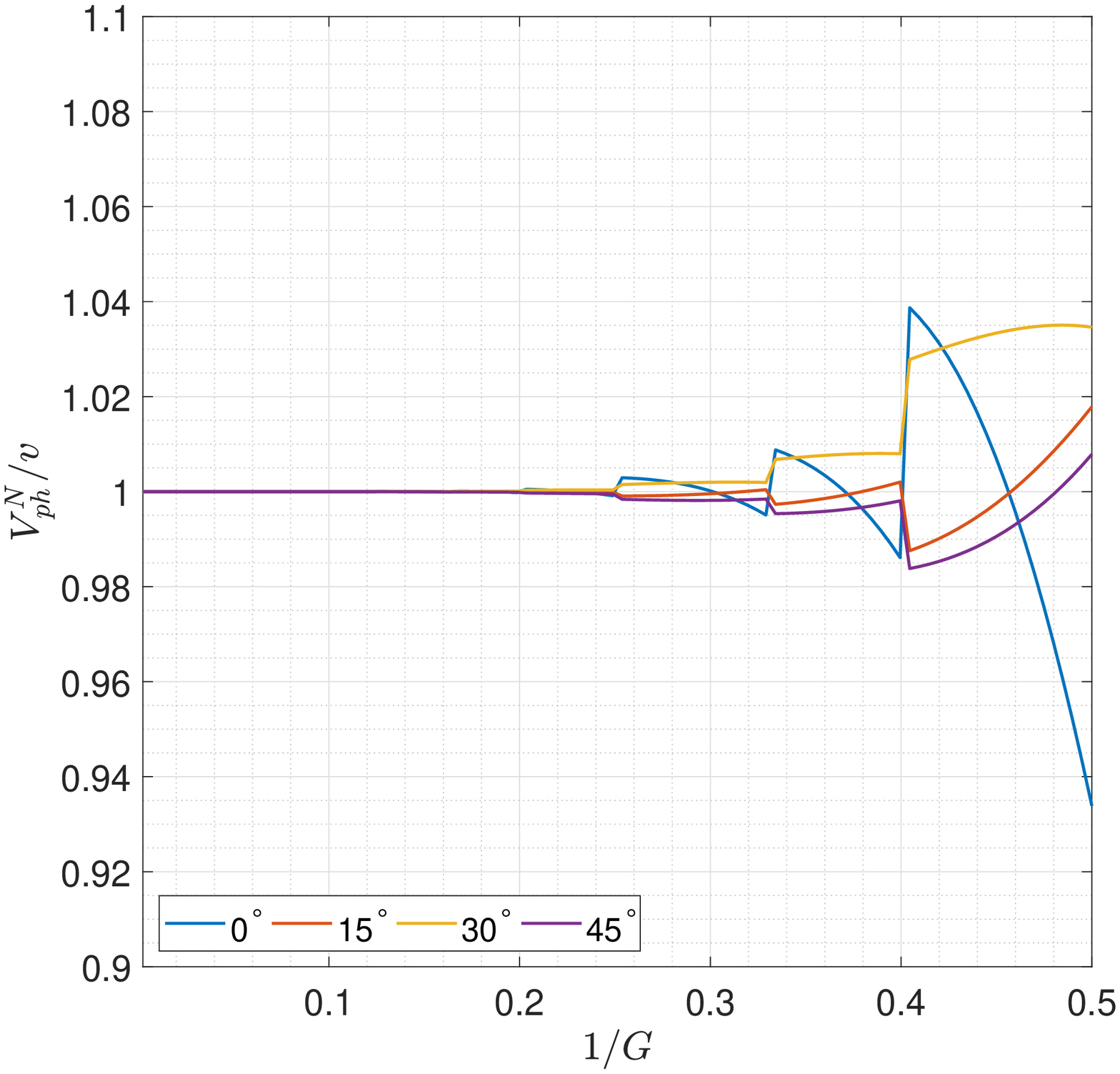}\label{S2.fig.01g}}
~
\subfloat[refined PW 17p ($\gamma=1.00$)]{\includegraphics[width=0.35\textwidth]{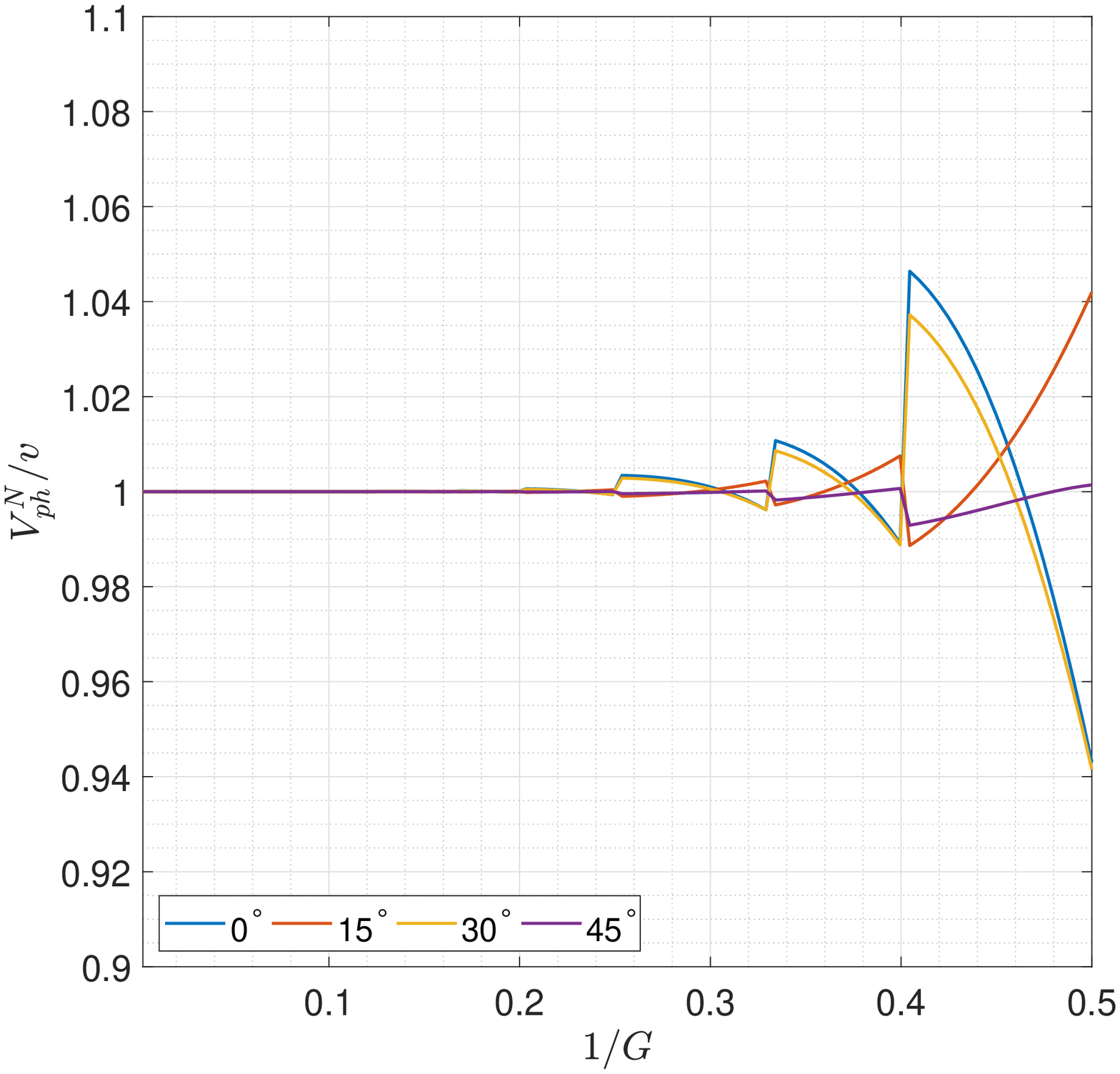}\label{S2.fig.01h}}
\caption{Normalized phase velocity curves for various schemes}\label{S2.fig.01}
\end{figure}

\begin{figure}[htp]
\centering
\subfloat[refined PW 25p ($\gamma=0.25$)]{\includegraphics[width=0.35\textwidth]{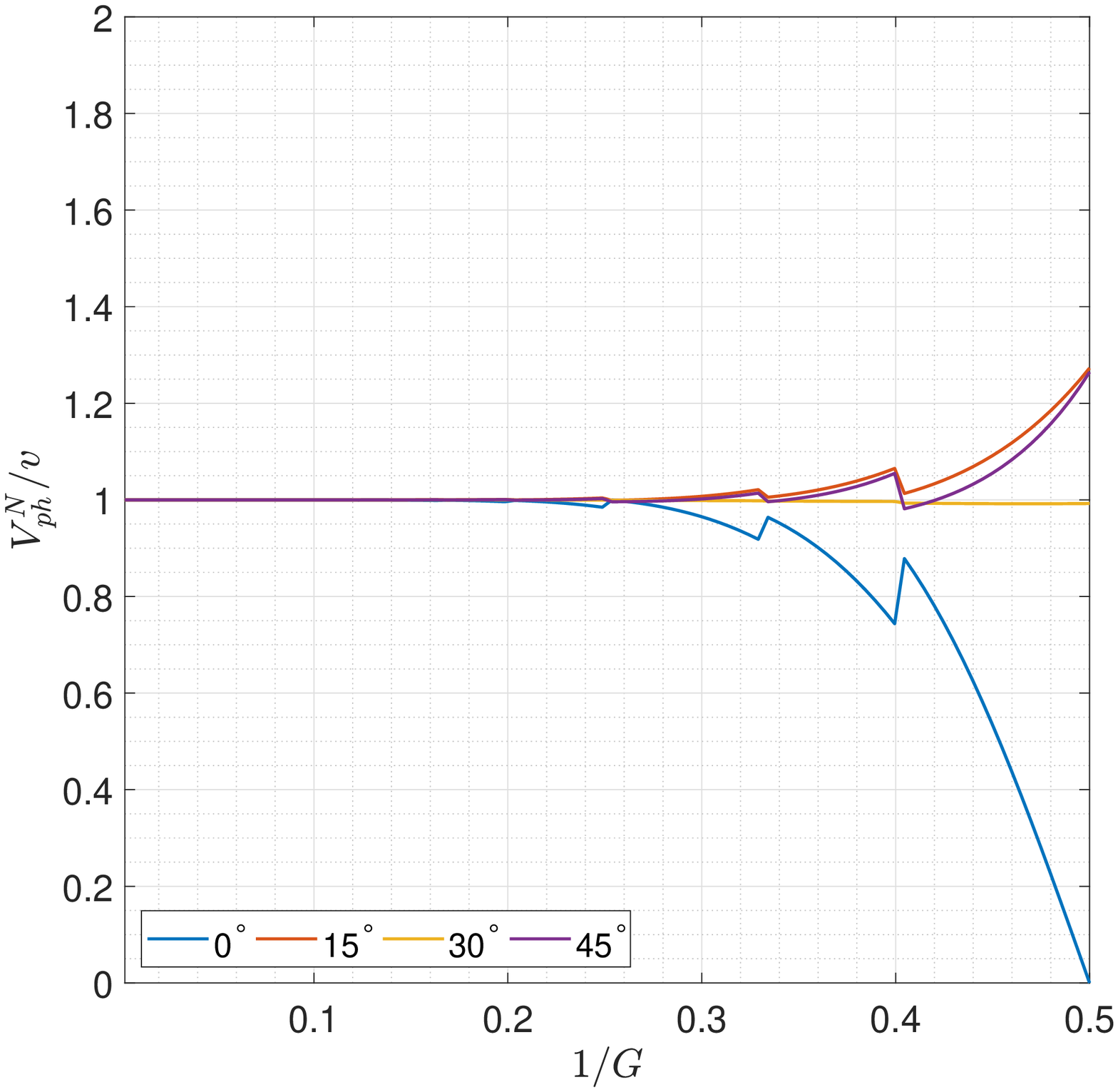}\label{S2.fig.02a}}
~
\subfloat[refined PW 25p ($\gamma=0.50$)]{\includegraphics[width=0.35\textwidth]{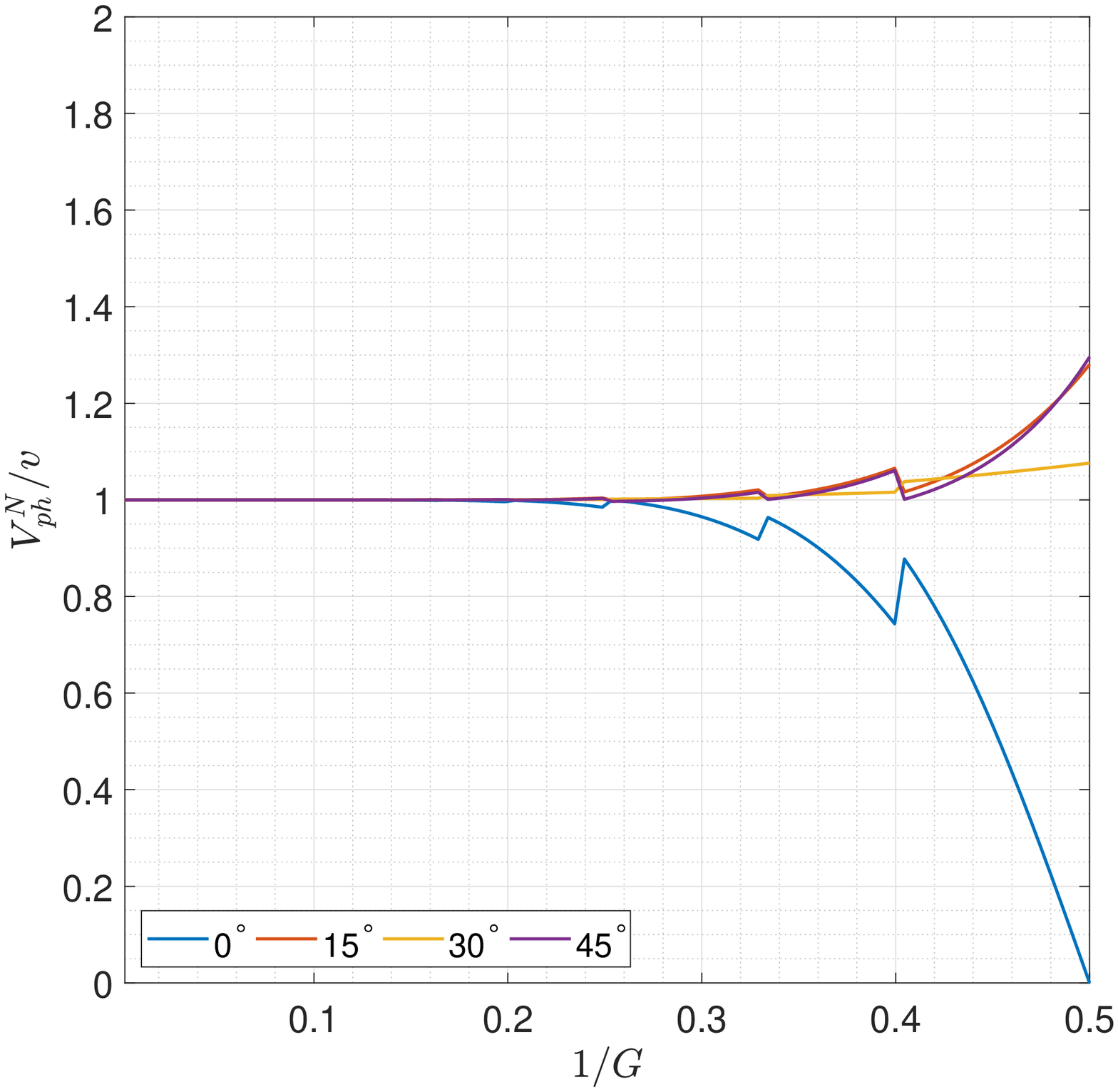}\label{S2.fig.02b}}
~
\subfloat[refined PW 25p ($\gamma=0.75$)]{\includegraphics[width=0.35\textwidth]{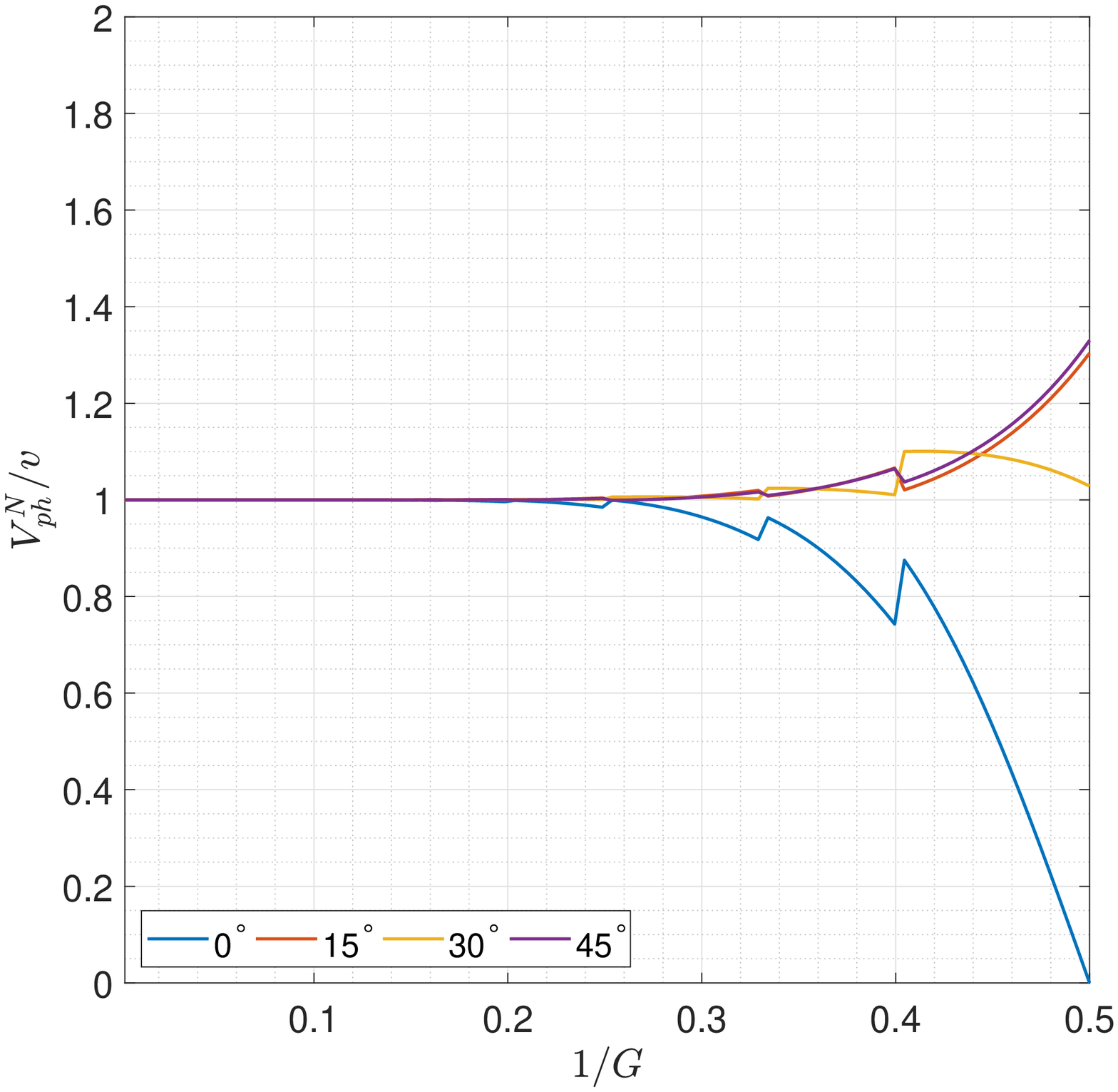}\label{S2.fig.02c}}

\subfloat[refined PW 25p ($\gamma=1.00$)]{\includegraphics[width=0.35\textwidth]{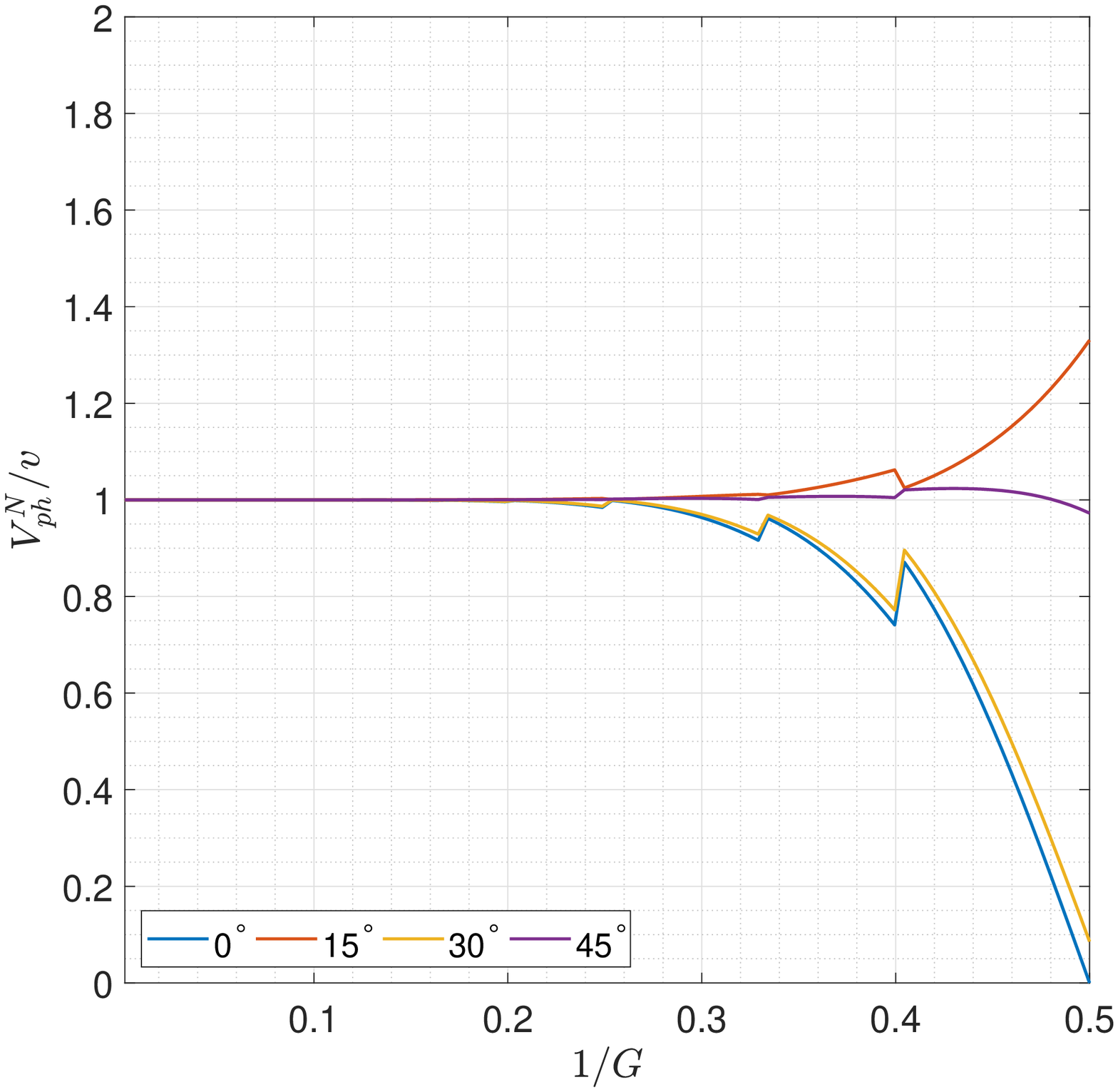}\label{S2.fig.02d}}
~
\subfloat[refined PW 17p ($\gamma=0.25$)]{\includegraphics[width=0.35\textwidth]{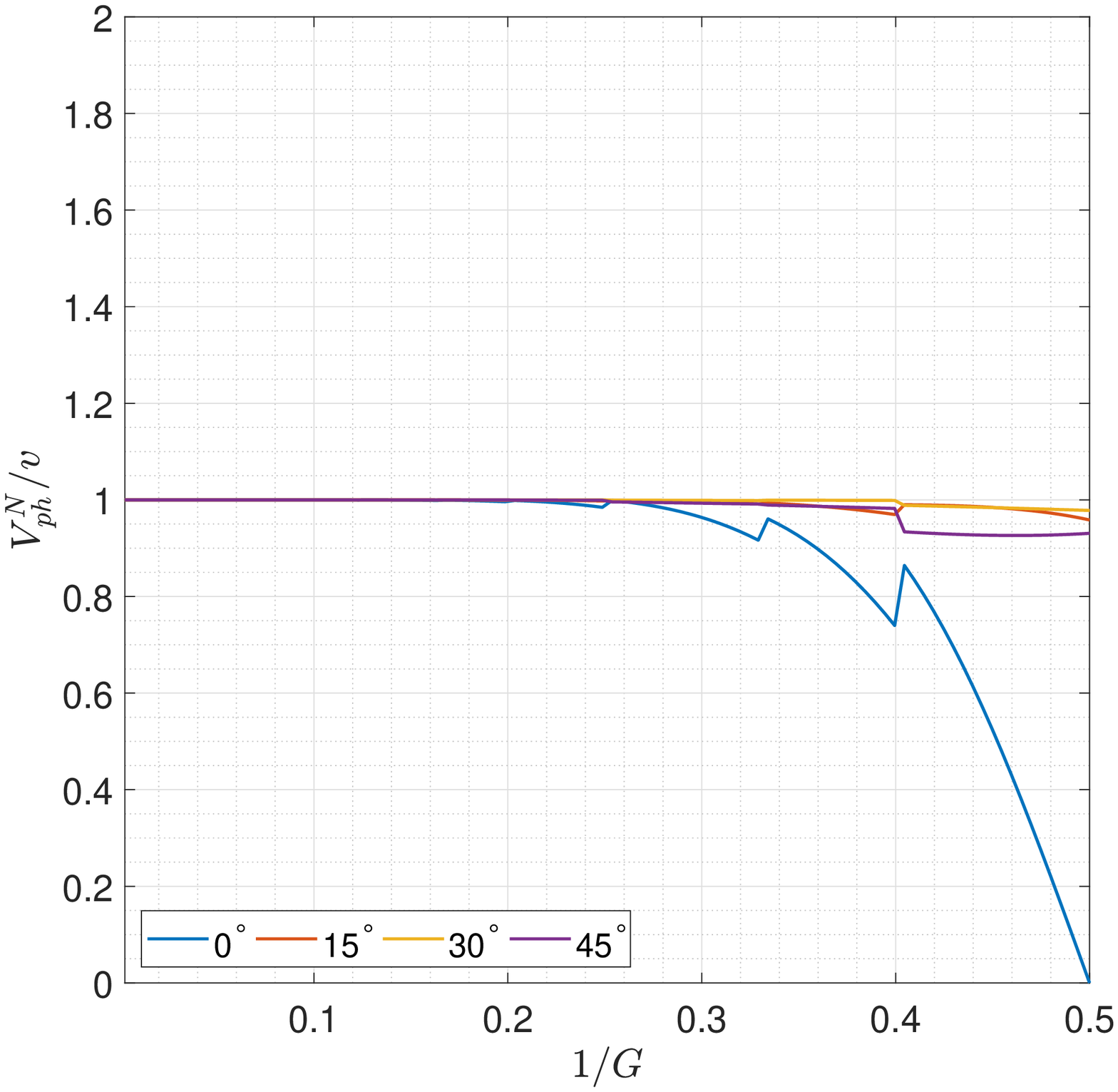}\label{S2.fig.02e}}
~
\subfloat[refined PW 17p ($\gamma=0.50$)]{\includegraphics[width=0.35\textwidth]{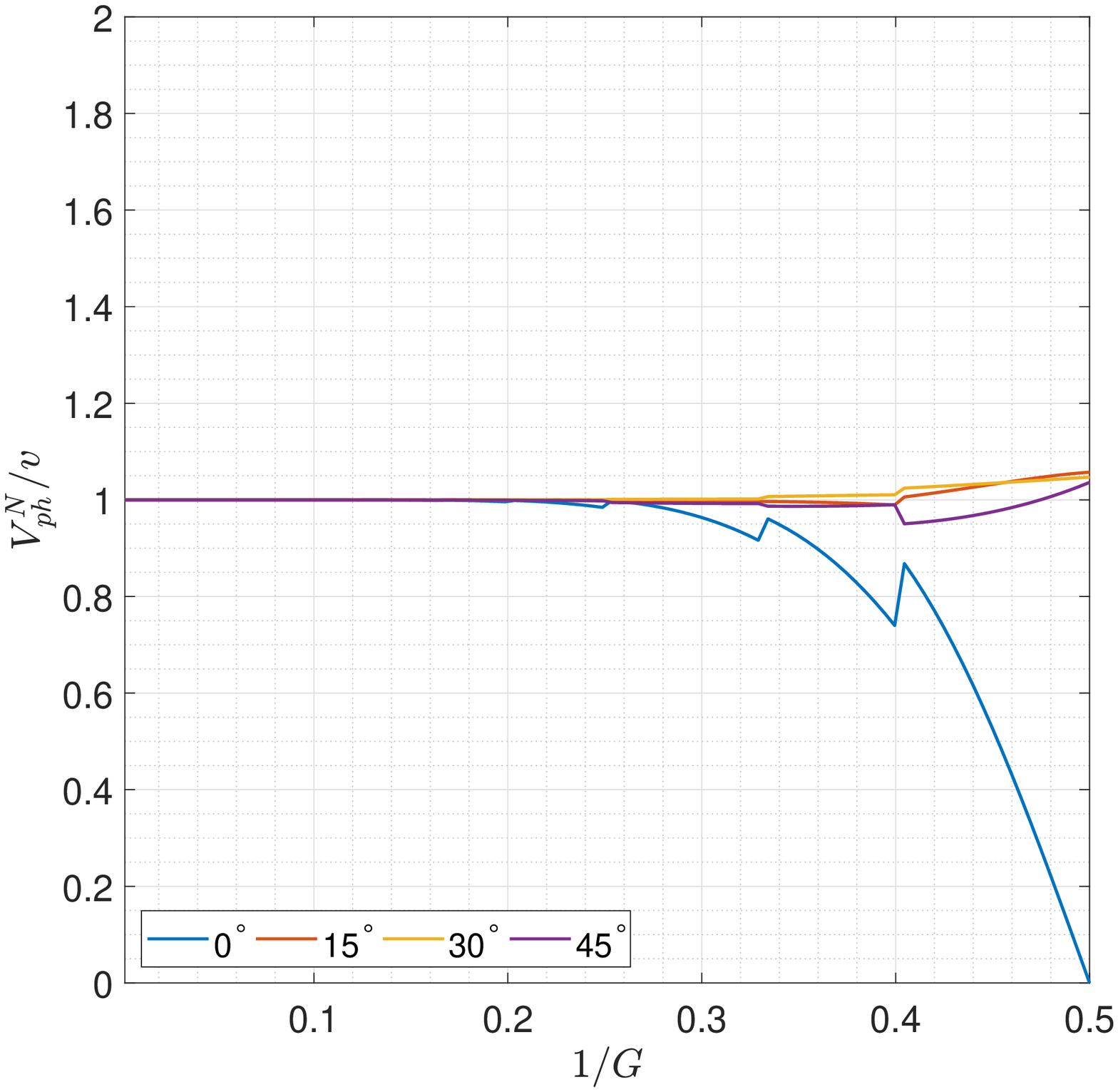}\label{S2.fig.02f}}

\subfloat[refined PW 17p ($\gamma=0.75$)]{\includegraphics[width=0.35\textwidth]{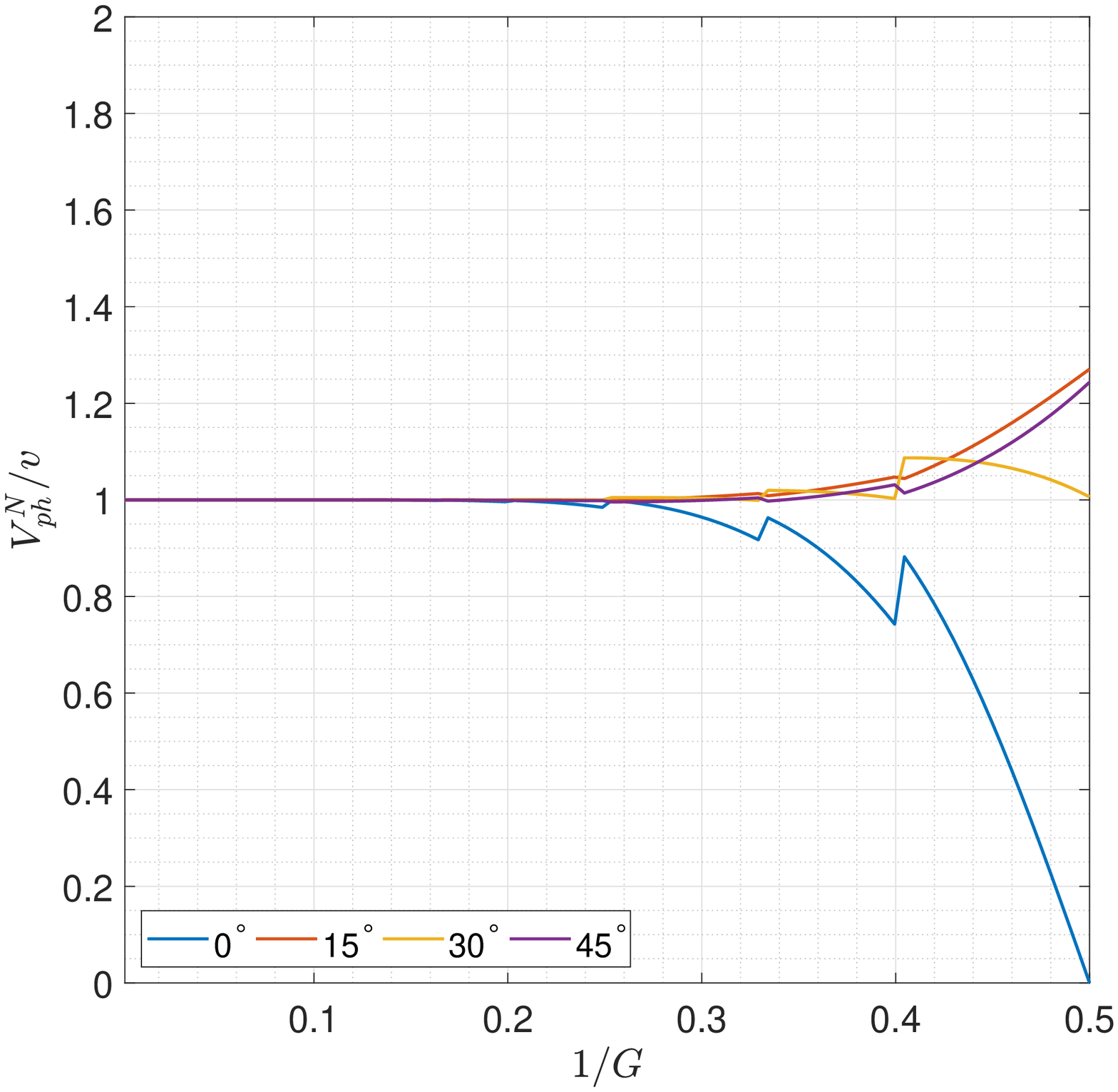}\label{S2.fig.02g}}
~
\subfloat[refined PW 17p ($\gamma=1.00$)]{\includegraphics[width=0.35\textwidth]{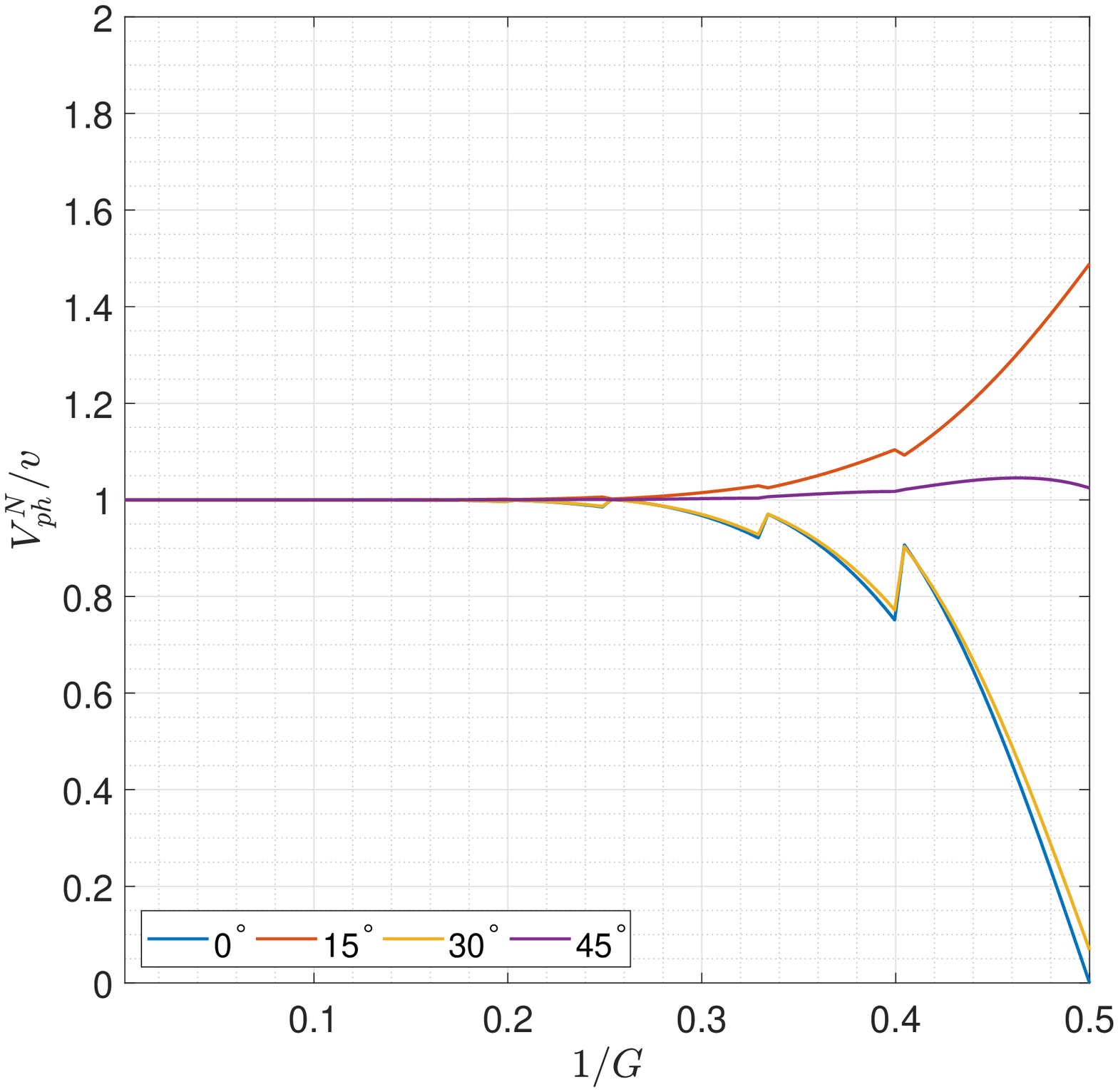}\label{S2.fig.02h}}
\caption{Normalized group velocity curves for various schemes}\label{S2.fig.02}
\end{figure}

\section{Numerical Examples}\label{S3}
In this section, we present three numerical examples. For the sake of simplicity, we present all results for the case that $\Delta x=\Delta z$. Example \ref{Ex.1} is meant for illustrating the accuracy and efficiency of the new schemes, refined PW 17p and refined PW 25p. We compare these schemes against some other existing optimal finite difference schemes that are widely used for solving the Helmholtz equation with PML.
In the followings examples, refined 25p, refined 17p and refined 9p represent the optimal 25-point finite difference scheme \cite{dastour2019}, the optimal 17-point finite difference scheme \cite{dastour2019} and the optimal 9-point finite difference scheme \cite{chen2013optimal}. The parameters of these refined finite difference schemes are estimated using the refined choice strategy (rule 3.8 from \cite{chen2013optimal}).
Moreover, global 25p and global 17p represent the optimal 25-point finite difference scheme \cite{dastour2019} and the optimal 17-point finite difference scheme \cite{dastour2019}, respectively, whose parameters are estimated using the global choice strategy (rules 3.4 from \cite{chen2013optimal} with $I_G=[4,400]$. In addition, optimal rotated 9p represents the rotated 9-point finite difference method \cite{jo1996optimal,chen2013optimal} with parameters $a = 0.5461$, $d = 0.3752$ and $e = -4\times 10^{-5}$. This group of optimal parameters was provided by Jo, Shin and Suh \cite{jo1996optimal,chen2013optimal} as optimal parameters for the rotated 9p finite difference method. Moreover, let point-weighting 9p represent the point-weighting scheme \cite{cheng2017dispersion} whose parameters are estimated using the flexible strategy for selection of weights (rule 3.3 at page 2354 of \cite{cheng2017dispersion}).

Furthermore, the error between the numerical solution and the exact solution is measured in C-norm \cite{chen2013optimal}, which is defined for any $M\times N$ complex matrix $\mathbf{Z}$ as follows
\begin{align}\label{S3.Norm}
\|\mathbf{Z}\|_C =\max_{1\leq i \leq M,~1\leq j \leq N} |z_{i,j}|.
\end{align}
where $|z_{i,j}|$ is the complex modulus of $z_{i,j}$.

In Example \ref{Ex.2}, we analyze the numerical solutions of the new schemes and compare them with the exact solution in a homogenous model. Finally, in Example \ref{Ex.3}, a more realistic problem is solved by refined PW 17p and refined 17p where we demonstrate the importance of consistently of an optimal scheme with the Helmholtz equation with PML.

\subsection{Example 1}\label{Ex.1}
Consider
\begin{align}\label{Ex.1.1}
\Delta  p + k^2 p = g(x,z),\quad \text{in~}\Omega : = (0,1) \times (0,1),
\end{align}
with
\begin{align}
\label{Ex.1.2}
(k_x,~k_z)&= k_{0}\left(e^{-k_0\left(x+z\right)}+1\right) \left( \cos (\theta),~\sin (\theta) \right),
\\
\label{Ex.1.3}
g(x,z)&=
e^{ik_{0}(x\cos (\theta)+z\sin (\theta))}
\left[
\sin\left(\pi x\right)\sin\left(\pi z\right)
\left({k_{0}}^2{\mathrm{e}}^{-2k_{0}\left(x+z\right)}\left(2{\mathrm{e}}^{k_{0}\left(x+z\right)}+1\right)-2\pi ^2\right)
\right. \notag \\ & \left.
-2\pi i k_{0}\left(\cos\left(\pi x\right)\sin\left(\pi z\right)\cos\left(\theta \right)+\cos\left(\pi z\right)\sin\left(\pi x\right)\sin\left(\theta \right)\right)
\right].
\end{align}
Dirichlet boundary conditions are imposed on the boundary, and its analytical solution is expressed as follows,
\begin{align}\label{Ex.1.4}
p(x,z) =
\sin\left(\pi x\right)\sin\left(\pi z\right)e^{ik_{0}(x\cos (\theta)+z\sin (\theta))}.
\end{align}

In this example, we compare the new schemes, refined PW 25p and refined PW 17p, with a number of popular optimal finite difference schemes that are used for the Helmholtz equation with PML. The new schemes are compared against refined 25p, global 25p, refined 17p, global 17p, refined 9p, optimal rotated 9p, point-weighting 9p, non-compact fourth-order (NC 4th-order) and conventional 5p. Moreover, the interval $I_G=\left[G_{\min},G_{\max}\right]= \left[\frac{2\pi}{hk_{\max}},\frac{2\pi}{hk_{\min}}\right]$ is estimated by using a priori information.

All the experiments in this example are performed with MATLAB 9.6.0.1135713 (R2019a) Update 3 on a Dell laptop equipped with Windows 10 Education Edition (64-bit), Intel(R) Core(TM) i5-4210U CPU, and 8.00 GB Physical Memory (RAM). We used an unsymmetric-pattern multifrontal (UMF) method for sparse LU factorization \cite{davis1997unsymmetric,davis2004algorithm} for solving linear systems generated by each finite difference method.

Table \ref{Ex1.table1} and \ref{Ex1.table2} demonstrate the error in the C-norm, defined in equation \eqref{S3.Norm}, for various schemes for different gridpoints $N$ per line and $\theta=\pi/4$ when $k_0 = 75$\linebreak and $k_0 = 150$, respectively. What stands out from these tables is that the new schemes, refined PW 25p and refined PW 17p, are indeed fourth order since the C-norm gets roughly $2^4$ smaller by decreasing the step-size by half each time. Moreover, we can see that the accuracy of the new schemes are comparable with refined 25p, global 25p, refined 17p and global 17p; however, refined PW 25p has shown the best level of accuracy in our experiment.  In addition, refined PW 17p and refined 17p have shown quite similar accuracy level for this example.
\begin{table}[htp]
\centering
\caption{The error in the C-norm for $k_0 = 75$.}
\label{Ex1.table1}
\begin{tabular}{cccc}
\toprule
 $N$& 131& 261& 521\\
\toprule
refined PW 17p& 7.6295e-04& 4.2110e-05& 2.5961e-06\\
refined PW 25p& 6.6847e-04& 2.6623e-05& 1.4675e-06\\
refined 25p& 7.3473e-04& 3.8492e-05& 2.3576e-06\\
global 25p& 7.4767e-03& 2.5834e-04& 1.9776e-05\\
refined 17p& 7.6295e-04& 4.2110e-05& 2.5961e-06\\
global 17p& 8.0968e-03& 2.8294e-04& 2.1769e-05\\
NC 4th-order& 3.8304e-02& 1.1364e-03& 7.8459e-05\\
refined 9p& 3.4344e-02& 8.1079e-03& 1.9689e-03\\
conventional 5p& 2.9867e+01& 3.2683e-01& 7.0565e-02\\
optimal rotated 9p& 1.2382e-01& 3.0495e-02& 7.6214e-03\\
point-weighting 9p& 3.4632e-02& 8.1434e-03& 1.9721e-03\\
\bottomrule
\end{tabular}
\end{table}

\begin{table}[htp]
\centering
\caption{The error in the C-norm for $k_0 = 150$.}
\label{Ex1.table2}
\begin{tabular}{cccc}
\toprule
 $N$& 241& 481& 961\\
\toprule
refined PW 17p& 1.1087e-03& 4.9325e-05& 3.9214e-06\\
refined PW 25p& 1.2022e-03& 3.1931e-05& 2.0554e-06\\
refined 25p& 1.2070e-03& 4.5132e-05& 3.5631e-06\\
global 25p& 2.9217e-02& 1.2571e-03& 4.8789e-05\\
refined 17p& 1.1087e-03& 4.9325e-05& 3.9214e-06\\
global 17p& 3.1802e-02& 1.3738e-03& 5.3365e-05\\
NC 4th-order& 1.7002e-01& 5.3904e-03& 1.9552e-04\\
refined 9p& 3.6788e-02& 8.6719e-03& 2.0928e-03\\
conventional 5p& 7.6177e+00& 5.2672e-01& 1.2031e-01\\
optimal rotated 9p& 2.4496e-01& 4.9931e-02& 1.2727e-02\\
point-weighting 9p& 3.7151e-02& 8.7202e-03& 2.0971e-03\\
\bottomrule
\end{tabular}
\end{table}

Furthermore, the accuracy of numerical solutions using NC 4th-order and conventional 5p again proves why one needs to consider using optimal finite difference schemes for the Helmholtz equation. As for optimal 9-point finite difference schemes, although these schemes are great for solving Helmholtz equation, optimal fourth-order finite difference schemes overall provide better efficiency since to maintain the same level of accuracy, fourth-order finite difference methods require a smaller number of gridpoints than optimal 9-point finite difference schemes \cite{dastour2019}. As can be seen from Tables \ref{Ex1.table1} and \ref{Ex1.table2} an almost the same accuracy can be obtained using a quarter number of gridpoints, the overall computational costs are comparable.

In addition, Tables \ref{Ex1.table3}, and \ref{Ex1.table4} show the error in the C-norm with $k_0=100$ and $\theta=0$, $\pi /16$,\ldots,$\pi/4$, for $N$ = $101$ and $N=201$, respectively. It can be seen that the new schemes still demonstrate fourth-order accuracy for various values of $\theta$ since the C-norm gets almost $2^4$ smaller by decreasing the step-size by half. Nonetheless, refined PW 25p has shown more consistent level of accuracy than the rest of schemes under study.

\begin{table}[htp]
 \centering
  \caption{The error in the C-norm for $k_0 = 100$ and $N = 101$.}
  \label{Ex1.table3}
 \begin{tabular}{cccccc}
 \toprule
$\theta$    & $0$ & $\pi/16$ & $\pi/8$ & $3\pi/16$ & $\pi/4$ \\
\toprule
refined PW 17p  & 1.0777e-02 &  1.3799e-02 & 8.3763e-03 & 4.5090e-03 & 1.2548e-01 \\
refined PW 25p  & 1.0501e-02 &  1.4570e-02 & 9.4068e-03 & 7.5605e-03 & 2.3524e-02 \\
refined 25p  & 1.0537e-02 &  1.4637e-02 & 9.1759e-03 & 7.2469e-03 & 3.3765e-02 \\
global 25p  & 9.7047e-02 &  3.6421e-02 & 1.9046e-02 & 1.5790e-02 & 3.3981e-01 \\
refined 17p  & 1.0777e-02 &  1.3799e-02 & 8.3763e-03 & 4.5090e-03 & 1.2548e-01 \\
global 17p  & 1.0099e-01 &  3.6463e-02 & 1.8545e-02 & 1.8253e-02 & 3.2219e-01 \\
NC 4th-order  & 3.9183e-01 &  7.2159e-01 & 5.1360e-01 & 2.4293e-01 & 5.7859e-01 \\
refined 9p  & 9.4147e-02 &  1.2328e-01 & 9.7295e-02 & 1.1429e-01 & 6.4245e-01 \\
conventional 5p  & 2.8297e+00 &  3.9821e+00 & 3.6443e+00 & 4.4336e+00 & 4.7288e+01 \\
optimal rotated 9p  & 2.4409e-01 &  1.9197e+00 & 1.8917e-01 & 4.5804e-01 & 4.8616e+00 \\
point-weighting 9p  & 9.4140e-02 &  1.2232e-01 & 9.7428e-02 & 1.1577e-01 & 6.5001e-01 \\
\bottomrule
\end{tabular}
\end{table}

\begin{table}[htp]
 \centering
  \caption{The error in the C-norm for $k_0 = 100$ and $N = 201$.}
  \label{Ex1.table4}
 \begin{tabular}{cccccc}
 \toprule
$\theta$    & $0$ & $\pi/16$ & $\pi/8$ & $3\pi/16$ & $\pi/4$ \\
\toprule
refined PW 17p  & 7.2884e-04 &  8.4785e-04 & 5.4066e-04 & 4.7556e-04 & 3.1612e-04 \\
refined PW 25p  & 7.3655e-04 &  8.3922e-04 & 5.0431e-04 & 4.1707e-04 & 2.6860e-04 \\
refined 25p  & 7.3531e-04 &  8.4408e-04 & 5.2335e-04 & 4.4990e-04 & 3.0142e-04 \\
global 25p  & 3.0820e-03 &  8.4830e-03 & 2.3530e-03 & 1.7268e-03 & 2.5818e-03 \\
refined 17p  & 7.2884e-04 &  8.4785e-04 & 5.4066e-04 & 4.7556e-04 & 3.1612e-04 \\
global 17p  & 3.2273e-03 &  8.8107e-03 & 2.3169e-03 & 1.9384e-03 & 2.8142e-03 \\
NC 4th-order  & 2.2008e-02 &  6.9477e-02 & 3.2024e-02 & 1.8711e-02 & 1.2017e-02 \\
refined 9p  & 2.1569e-02 &  2.4452e-02 & 2.2774e-02 & 2.7477e-02 & 1.8126e-01 \\
conventional 5p  & 1.4974e+00 &  1.2433e+00 & 1.3061e+00 & 2.6444e+00 & 1.4253e+00 \\
optimal rotated 9p  & 1.1361e-01 &  4.3817e-01 & 7.2447e-02 & 6.4544e-02 & 5.5283e-01 \\
point-weighting 9p  & 2.1567e-02 &  2.4293e-02 & 2.2840e-02 & 2.7603e-02 & 1.8795e-01 \\
\bottomrule
\end{tabular}
\end{table}

Generally speaking, all finite difference schemes require solving a linear system $AX=R$, where $A$ is the matrix of coefficients associated with each finite difference scheme, $X$ is a vector consists of the unknowns, and $R$ is the right-hand side. The matrix in the linear system associated with each finite difference scheme, matrix $A$, is a sparse matrix with complex values (see Figure \ref{Ex1_Fig1}). For a given $N$, the number of non-zero entries of the matrix $A$ (NNE) for  refined PW 25p and refined PW 17p are $25\,N^2 - 60\,N + 36$ and $17\,N^2-36\,N+20$, respectively. For example, when $N=10$, NNE for refined PW 25p  and refined PW 17p are 295936 and 201760, respectively. Therefore, refined PW 17p has less computational complexity than refined PW 25p and is more suitable for large practical applications. In \cite{dastour2019}, the authors performed a complete analysis regarding the CPU times of 25p schemes and 9p scheme (for details, readers are referred to reference \cite{dastour2019}).

\begin{figure}[htp]
\centering
\subfloat[refined PW 25p]{\includegraphics[width=0.45\textwidth]{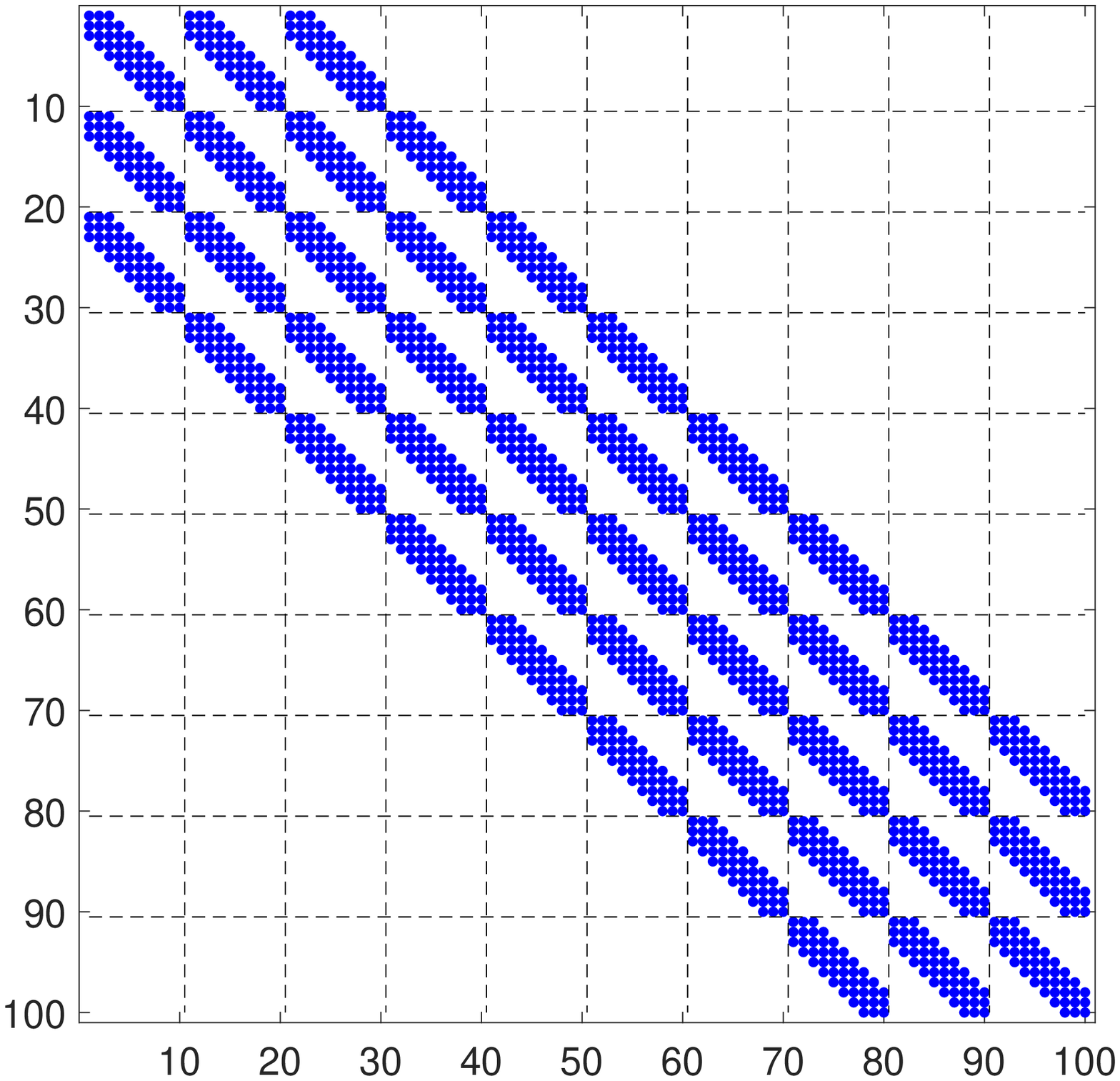}\label{Ex1_Fig1a}}
~
\subfloat[refined PW 17p]{\includegraphics[width=0.45\textwidth]{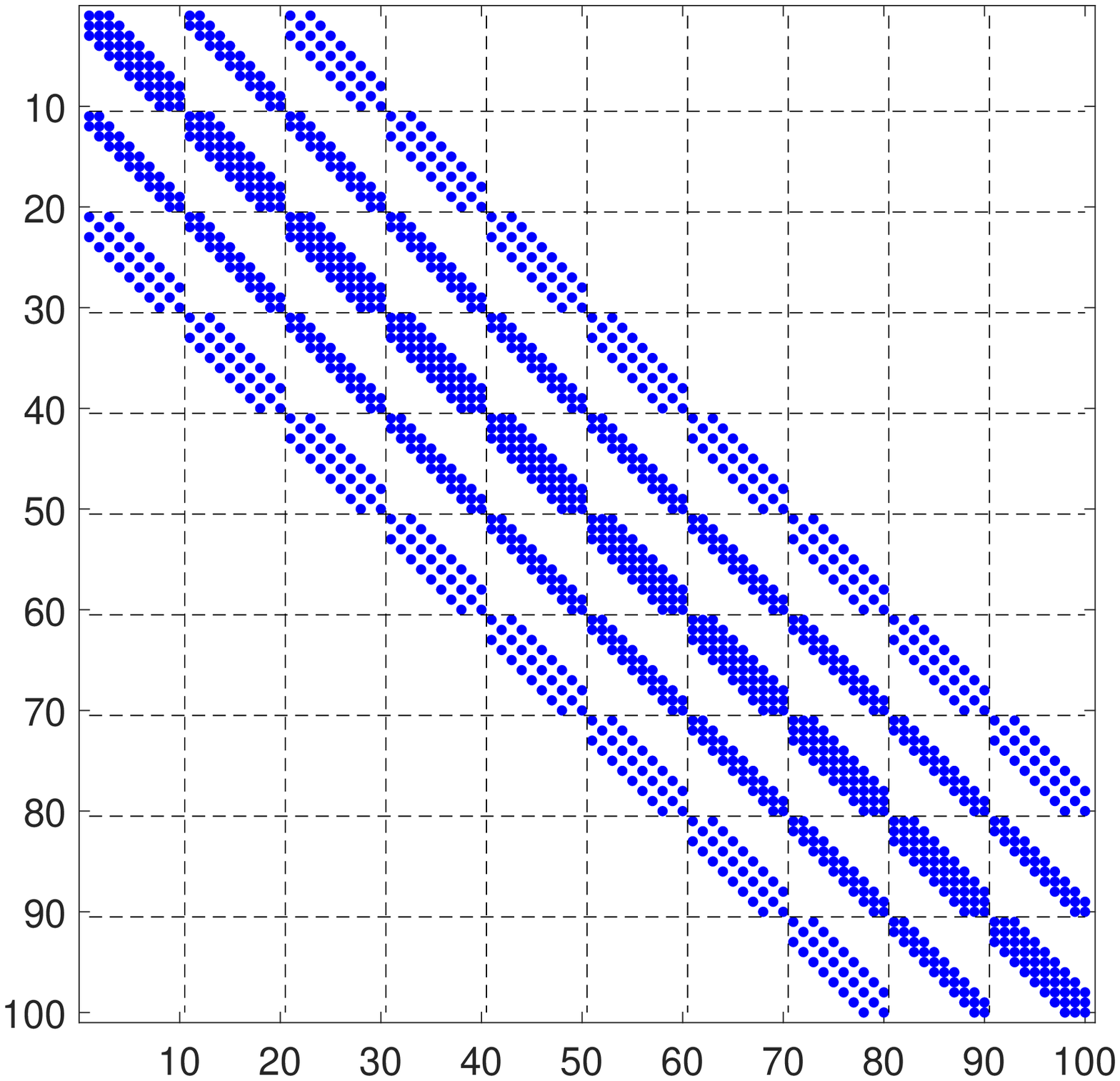}\label{Ex1_Fig1b}}
\caption{The matrix in the linear systems with $N=10$.}\label{Ex1_Fig1}
\end{figure}

The results of this example can be summarized as follows. The two new schemes, refined PW 25p, and refined PW 17p have shown a comparable level of accuracy with that of available from refined 25p and refined 17p; however, refined PW 25p has shown a more consistent level of accuracy than the rest of schemes under study. Although refined PW 17p and refined 17p have shown a quite similar accuracy level, the main differences between these two schemes are refined 17p is inconsistent in the presence of PML, and refined 17p cannot be extended to the case that $\Delta x\neq \Delta z$. Furthermore, refined PW 17p can provide a more efficient scheme than refined PW 25p in terms of computational complexity.

\subsection{Problem 2: a homogeneous model} \label{Ex.2}
In this example, a homogeneous velocity model with a velocity of is 2000 m/s is considered. The domain is $[0,~2km] \times [0km,~2km]$ with sampling intervals $h=\Delta x=\Delta z=20$  m. The time sampling here is $\Delta t=8$  ms. Moreover, a point source $\delta(x - x_s ,z -z_s)R(\omega,f_{M} )$ is located at the point $(700m,~500m)$, where $R(\omega,f_{M} )$ is the Ricker wavelet, defined in equation \eqref{Ex2.ricker}, with the peak frequency $f_{M}=15$ Hz.
\begin{equation}\label{Ex2.ricker}
R(t,f_{M} ) = \left(1 - 2\pi^2f_{M}^2 t^2 \right)/\exp\left(\pi^2f_{M}^2 t^2\right).
\end{equation}
From  \cite{alford1974accuracy}, the analytical solution of this homogeneous model is available as follows
\begin{equation}\label{Ex2.analytical}
p (x,z,t)=i\pi \mathcal{F}^{-1} \left(H^{(2)}_{0}\left(\frac{\omega}{v}\sqrt{(x-x_s)^2+(z-z_s)^2}\right)
\mathcal{F}\left(R(t,f_{M})\right)\right)
\end{equation}
where $\mathcal{F}$ and $\mathcal{F}^{-1}$ are respectively Fourier and inverse Fourier transformations with respect to time, and $H^{(2)}_{0}$ is the second Hankel function of order zero.

Since the numerical dispersion is dependent on the propagation angle, we have placed eight receivers. See Table \ref{Ex2.table0} and Figure \ref{Ex2.fig0} for coordinates of these receivers.

\begin{table}[htp]
\centering
\caption{The coordinate of various receivers}
\label{Ex2.table0}
\begin{tabular}{ccccccccc}
Receiver & 1   & 2   & 3   & 4   & 5   & 6   & 7   & 8   \\
\toprule
$x_{r}$     & 100 & 300 & 700 & 100 & 900 & 700 & 300 & 500 \\
$z{_r}$     & 500 & 300 & 700 & 700 & 500 & 300 & 900 & 900 \\
\bottomrule
\end{tabular}
\end{table}

\begin{figure}[htp]
\centering
\includegraphics[width=0.6\textwidth]{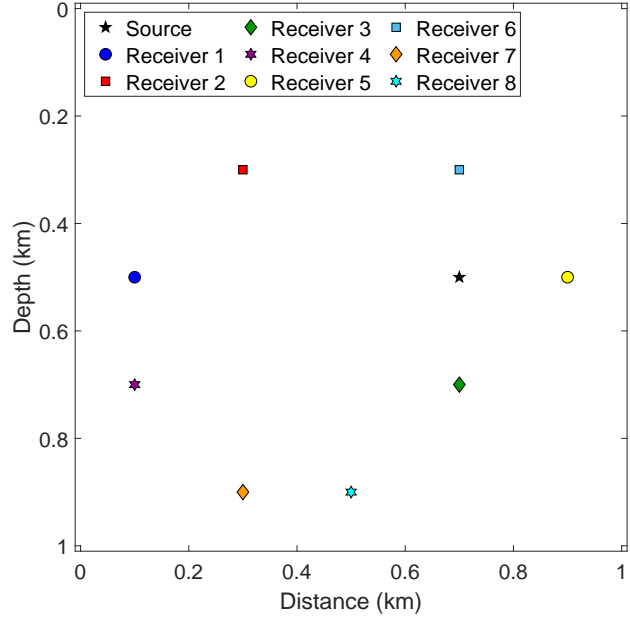}
\caption{Source and Receiver locations}\label{Ex2.fig0}
\end{figure}

In this example, we compare refined PW 17p, refined PW 25p, refined 25p, refined 17p, and NC 4th-order in terms of accuracy using the C-norm \eqref{S3.Norm}. Table \ref{Ex2.table1} highlights the error in the C-norm for receivers 1, 2, \ldots,8. What can be seen from Table \ref{Ex2.table1} is that refined PW 17p, refined PW 25p, refined 25p, and refined 17p have provided similar accuracy; however, refined PW 17p and refined 17p have shown a slightly better level of the accuracy than the other two schemes. One possible reason for this can be the size of the problem, $nx=nz=51$. Moreover, we included NC 4th-order in this example to emphasize the importance of using optimal finite difference methods for Helmholtz equation with PML since it can be seen that the NC 4th-order is the least efficient scheme amongst all of the schemes under study in this example.
\begin{table}[htp]
\centering
\caption{The error in the C-norm for various schemes.}
\label{Ex2.table1}
\begin{tabular}{ccccc}
\toprule
$(x_r,~z_r)$ &(100, 500) &(300, 300) &(700, 700) &(100, 700)\\
\toprule
refined PW 17p & 1.9495e-02 & 9.3023e-03 & 2.2237e-02 & 1.2545e-02\\
refined PW 25p & 2.0961e-02 & 1.3435e-02 & 1.9469e-02 & 1.5896e-02\\
refined 25p & 1.9892e-02 & 1.3034e-02 & 1.9484e-02 & 1.6327e-02\\
refined 17p & 1.9231e-02 & 9.2848e-03 & 2.2523e-02 & 1.2546e-02\\
NC 4th-order & 3.4299e-01 & 2.2120e-01 & 1.6034e-01 & 2.9828e-01\\
\toprule
$(x_r,~z_r)$ &(900, 500) &(700, 300) &(300, 900) &(500, 900)\\
\toprule
refined PW 17p & 2.2019e-02 & 2.2242e-02 & 6.0025e-03 & 9.0136e-03\\
refined PW 25p & 1.9500e-02 & 1.9468e-02 & 1.1749e-02 & 1.3388e-02\\
refined 25p & 1.9475e-02 & 1.9484e-02 & 1.0649e-02 & 1.2995e-02\\
refined 17p & 2.2434e-02 & 2.2526e-02 & 6.3076e-03 & 9.2414e-03\\
NC 4th-order & 1.6119e-01 & 1.6039e-01 & 1.4991e-01 & 2.2045e-01\\
\bottomrule
\end{tabular}
\end{table}

\begin{figure}[htp]
\centering
\subfloat[Exact and numerical solutions for Receiver 1]{\includegraphics[width=0.49\textwidth]{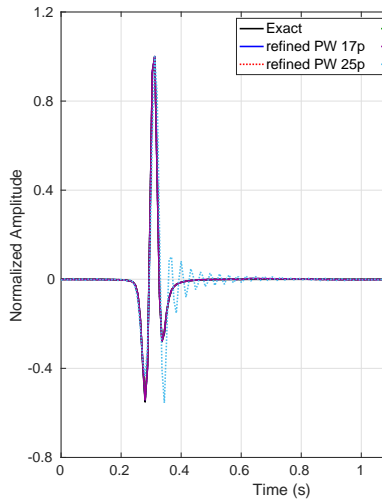}\label{Ex2.fig1_1}}
~
\subfloat[Exact and numerical solutions for Receiver 2]{\includegraphics[width=0.49\textwidth]{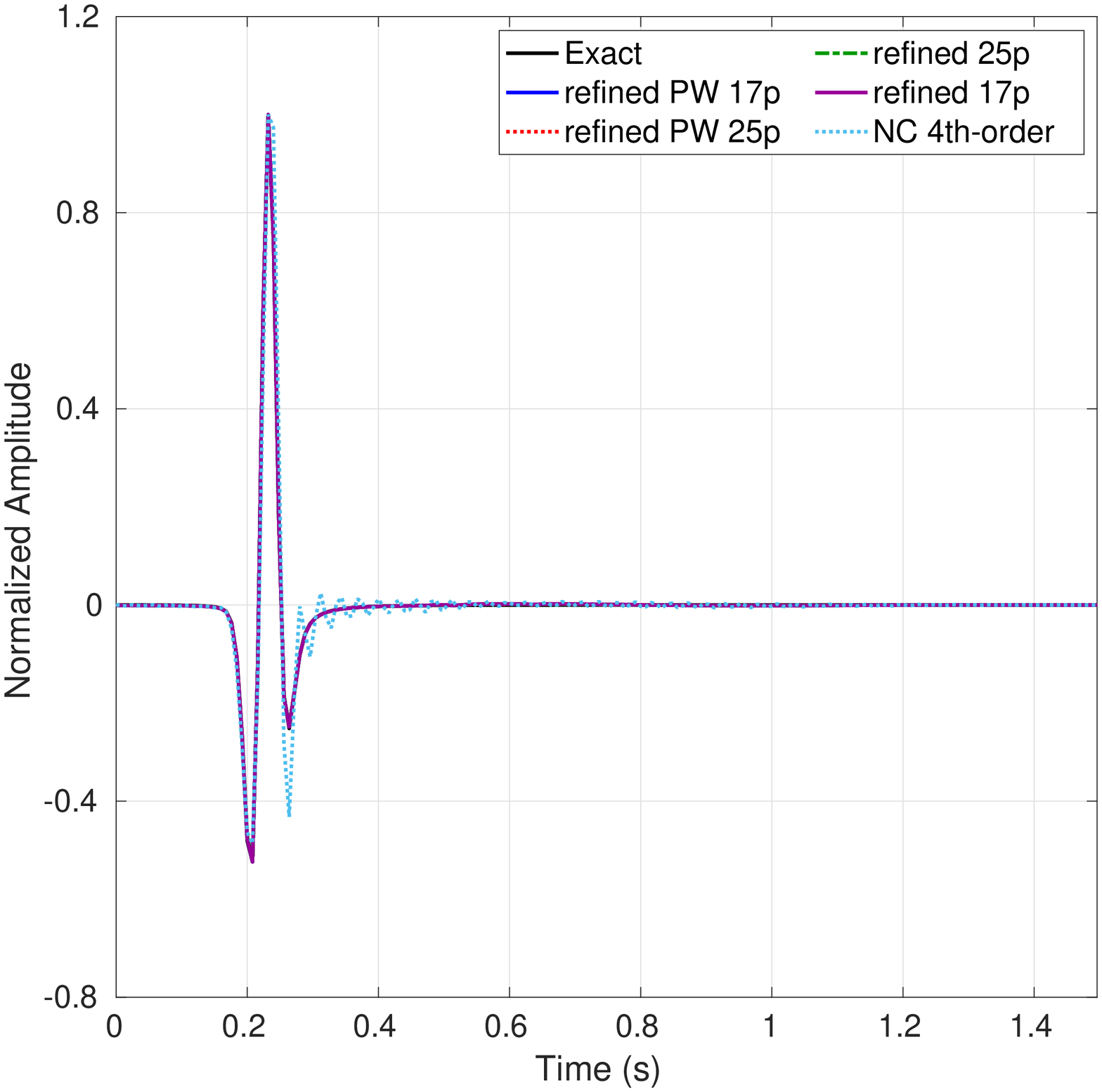}\label{Ex2.fig1_2}}

\subfloat[Exact and numerical solutions for Receiver 3]{\includegraphics[width=0.49\textwidth]{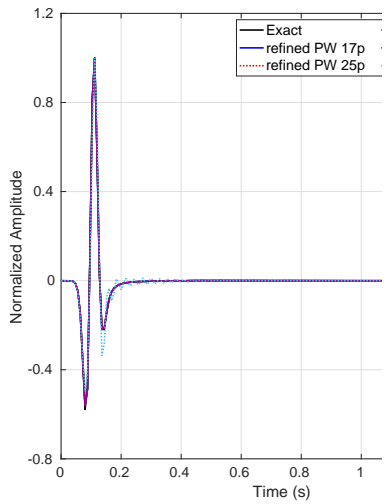}\label{Ex2.fig1_3}}
~
\subfloat[Exact and numerical solutions for Receiver 4]{\includegraphics[width=0.49\textwidth]{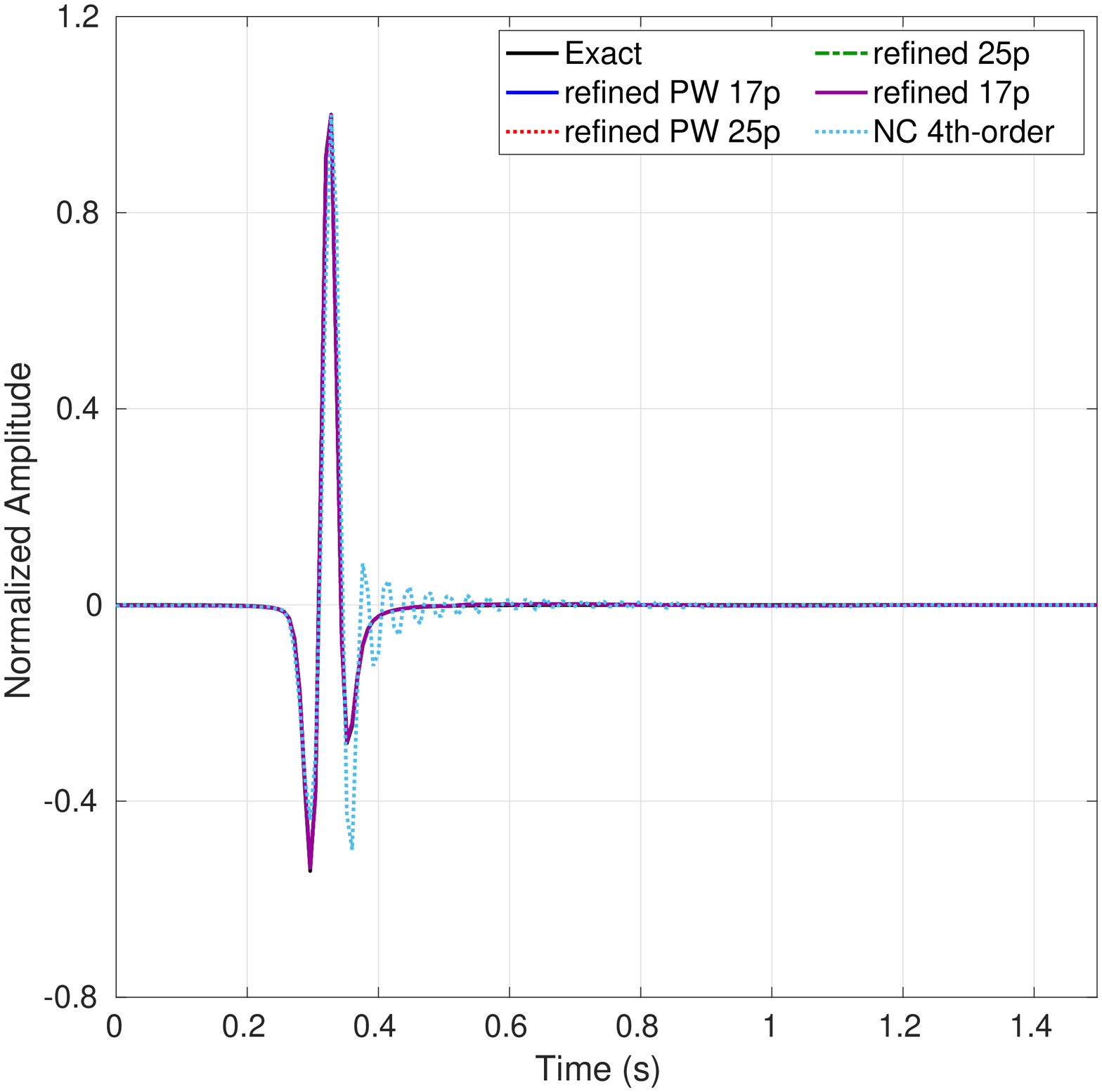}\label{Ex2.fig1_4}}
\caption{Exact and numerical solutions for Receivers 1,2,3 and 4}\label{Ex2.fig1}
\end{figure}

\begin{figure}[htp]
\centering
\subfloat[Exact and numerical solutions for Receiver 5]{\includegraphics[width=0.49\textwidth]{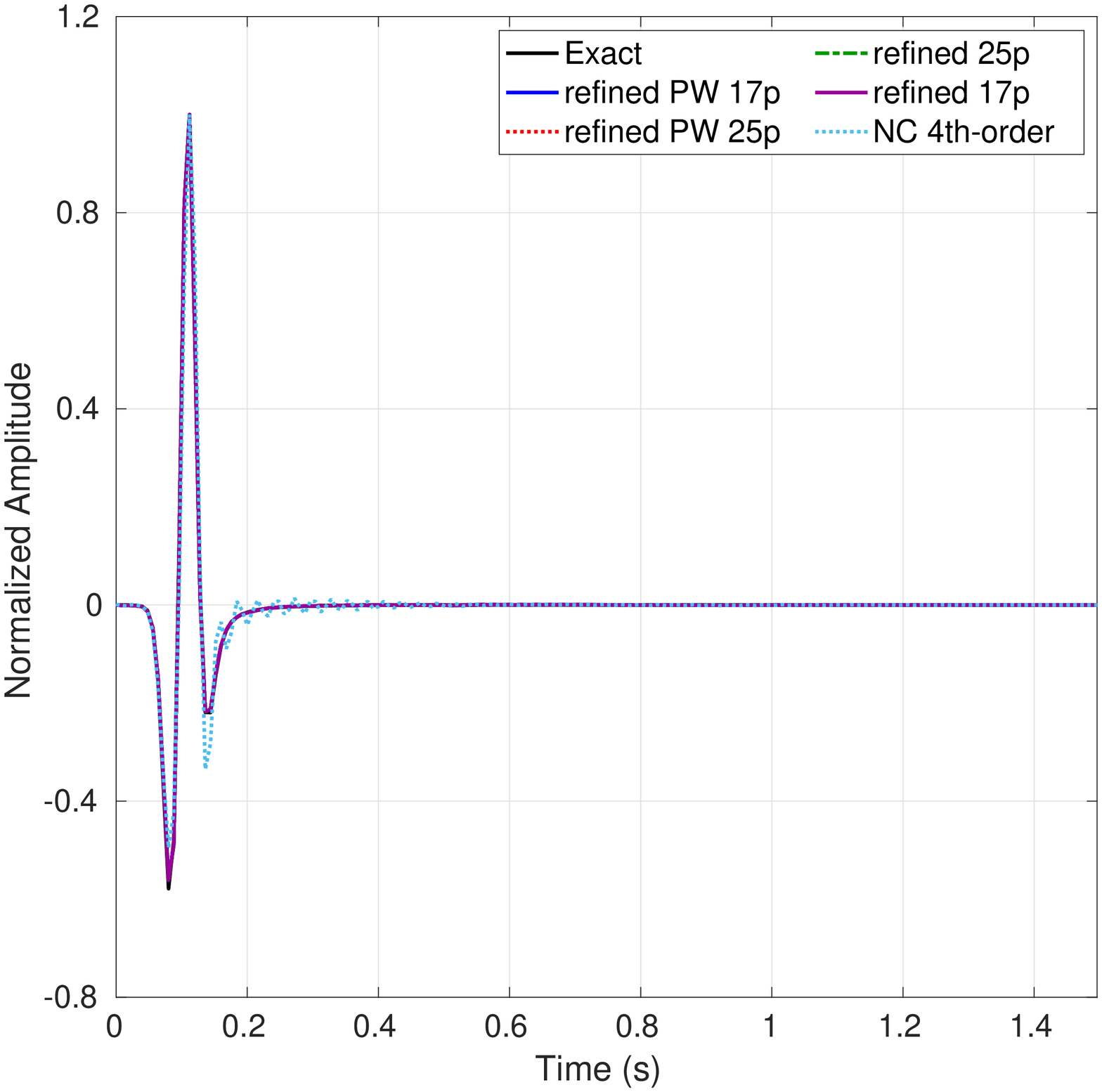}\label{Ex2.fig1_5}}
~
\subfloat[Exact and numerical solutions for Receiver 6]{\includegraphics[width=0.49\textwidth]{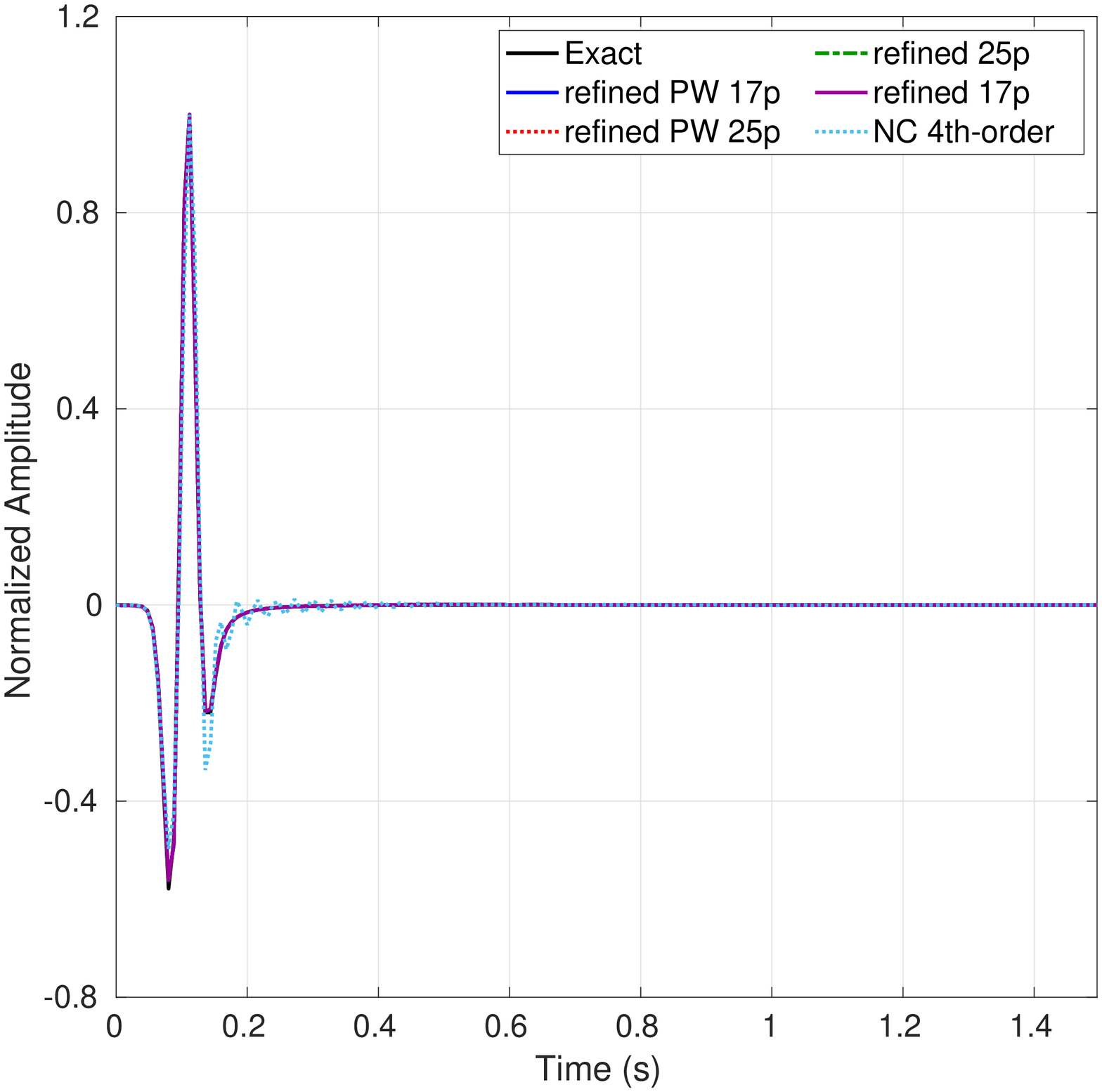}\label{Ex2.fig1_6}}

\subfloat[Exact and numerical solutions for Receiver 7]{\includegraphics[width=0.49\textwidth]{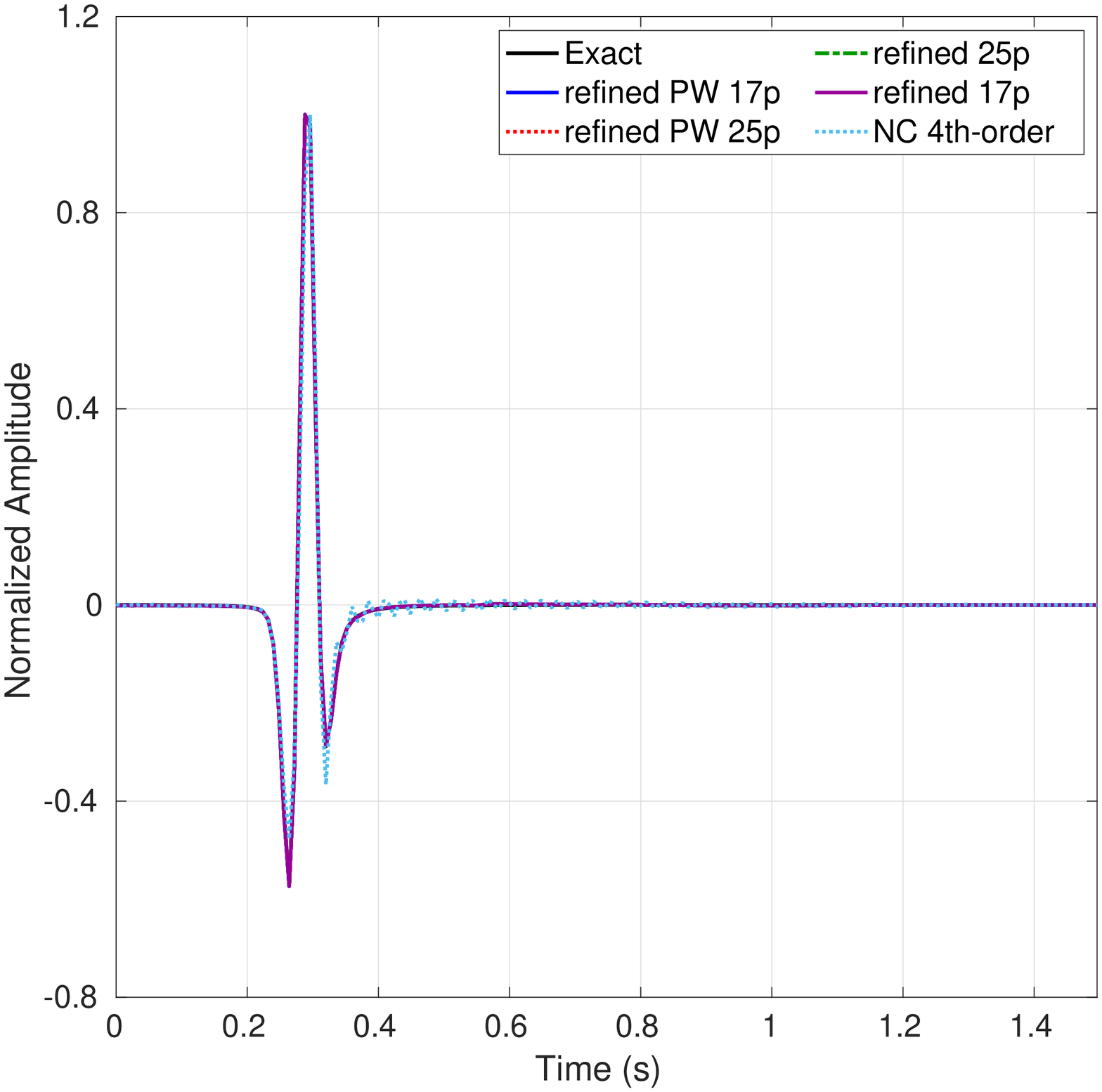}\label{Ex2.fig1_7}}
~
\subfloat[Exact and numerical solutions for Receiver 8]{\includegraphics[width=0.49\textwidth]{Ex2_Fig1_7.eps}\label{Ex2.fig1_8}}
\caption{Exact and numerical solutions for Receivers 5,6, 7 and 8}\label{Ex2.fig2}
\end{figure}

From \cite{dastour2019}, we know that not only refined 17p is inconsistent with Helmholtz equation with PML, but also cannot it approximate the Helmholtz equation when $\Delta x\neq \Delta z$. In this example, we could not observe any signs of inconsistency from refined 17p. Therefore, we demonstrate the importance of consistency in Example \ref{Ex.3}.

\subsection{Problem 3: a layered model} \label{Ex.3}
Consider a layered P-wave velocity model, shown in figure \ref{Ex3.fig0}. Here, horizontal and vertical samplings are $nx=nz=201$ with sampling intervals $\Delta x=\Delta z=10$  m, and the time sampling interval is $\Delta t=8$  ms. A point source $\delta(x - x_s ,z -z_s)R(\omega,f_{M} )$ is located at the point $(x_s,~z_s)=(1000m,~0m)$, where $R(\omega,f_{M} )$ is the Ricker wavelet with the peak frequency $f_{M}=20$ Hz.

\begin{figure}[htp]
\centering
\subfloat[A layered model]{\includegraphics[width=0.52\textwidth]{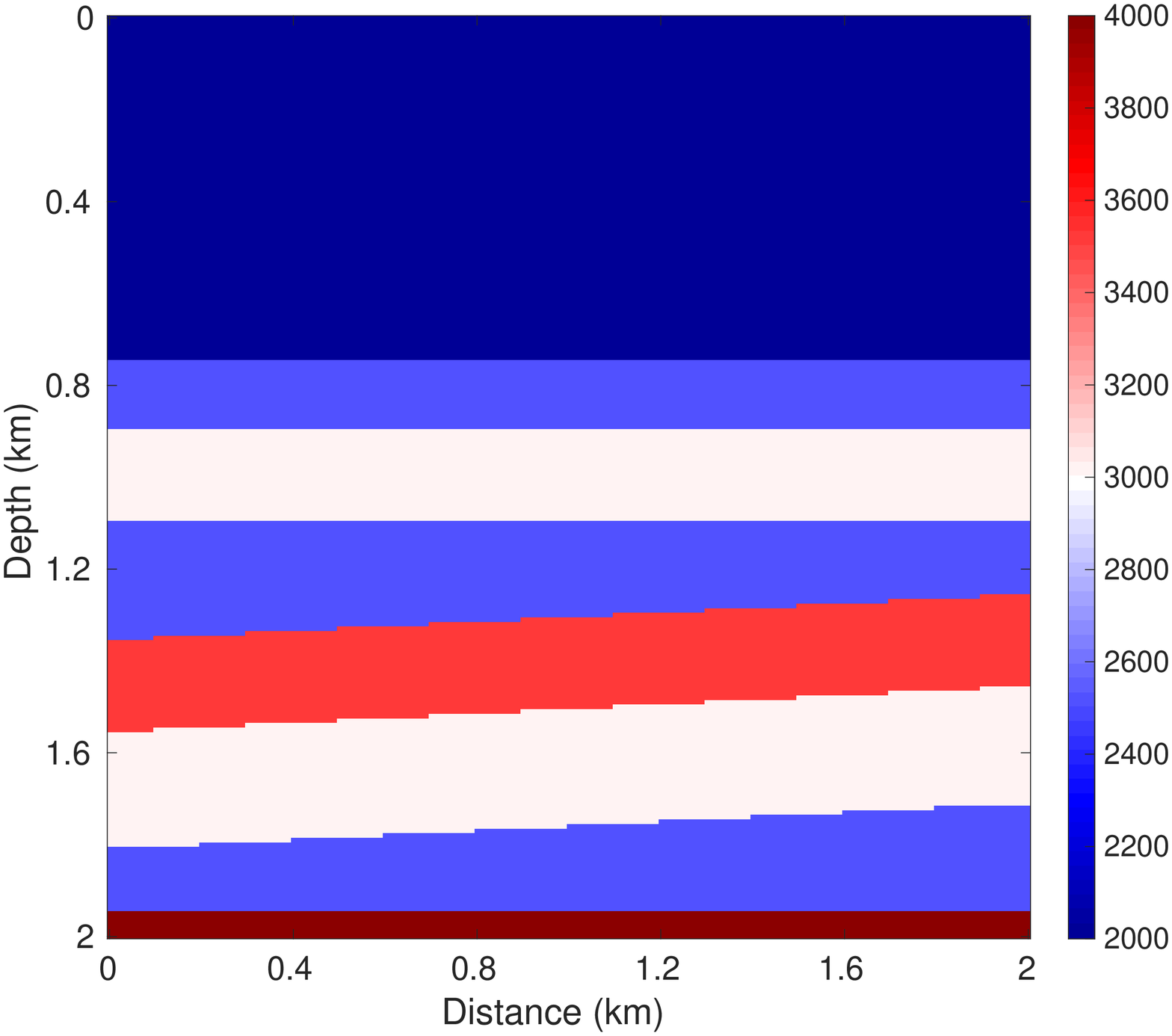}\label{Ex3.fig0}}
~
\subfloat[Source and Receiver locations]{\includegraphics[width=0.455\textwidth]{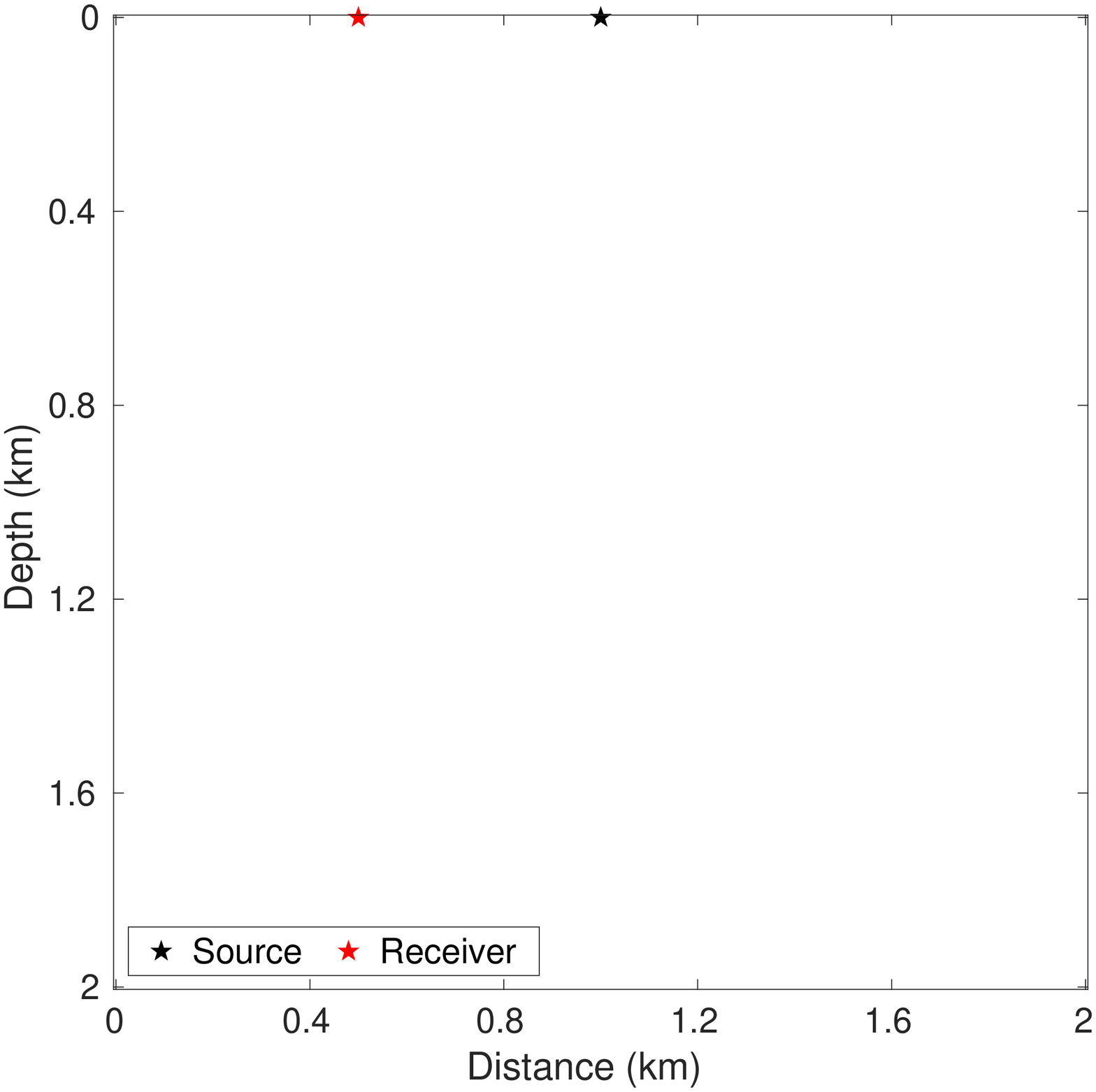}\label{Ex3.fig1}}
\end{figure}

Moreover, $I_G=\left[G_{\min},G_{\max}\right]$ is estimated by using a priori information before choosing parameters, that is $I_G= \frac{1}{f\,h}\left[v_{\min},v_{\max}\right]$. In addition, the damping profiles here are defined using $\sigma_x$ and $\sigma_z$ from \eqref{S1.eq.03} with $L_{PML}=500$ m and $a_0=1.790$.

Furthermore, the mono-frequency wavefields (real parts) for $f = 62.5$ Hz obtained by refined PW 17p and refined 17p  are available in Figure \ref{Ex3.fig2}.
We found the plot obtained by refined PW 17p more clear than that of obtained by refined 17p. Let's take a closer look at Figure \ref{Ex3.fig3b}. Especially, on the top-left and top-right of the image, we can observe some numerical particles.

\begin{figure}[htp]
\centering
\subfloat[refined PW 17p]{\includegraphics[width=0.45\textwidth]{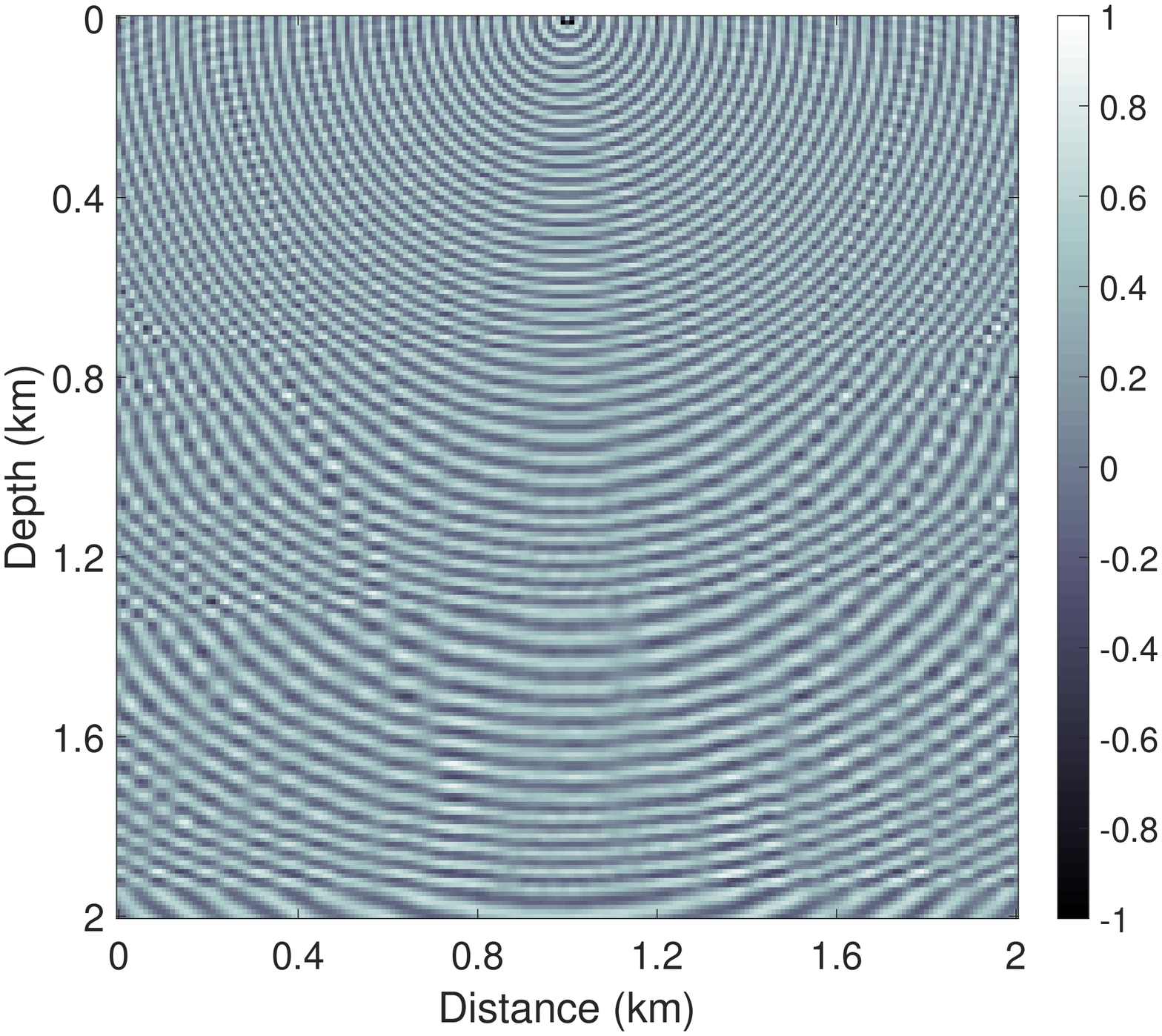}\label{Ex3.fig2a}}
~
\subfloat[refined 17p]{\includegraphics[width=0.45\textwidth]{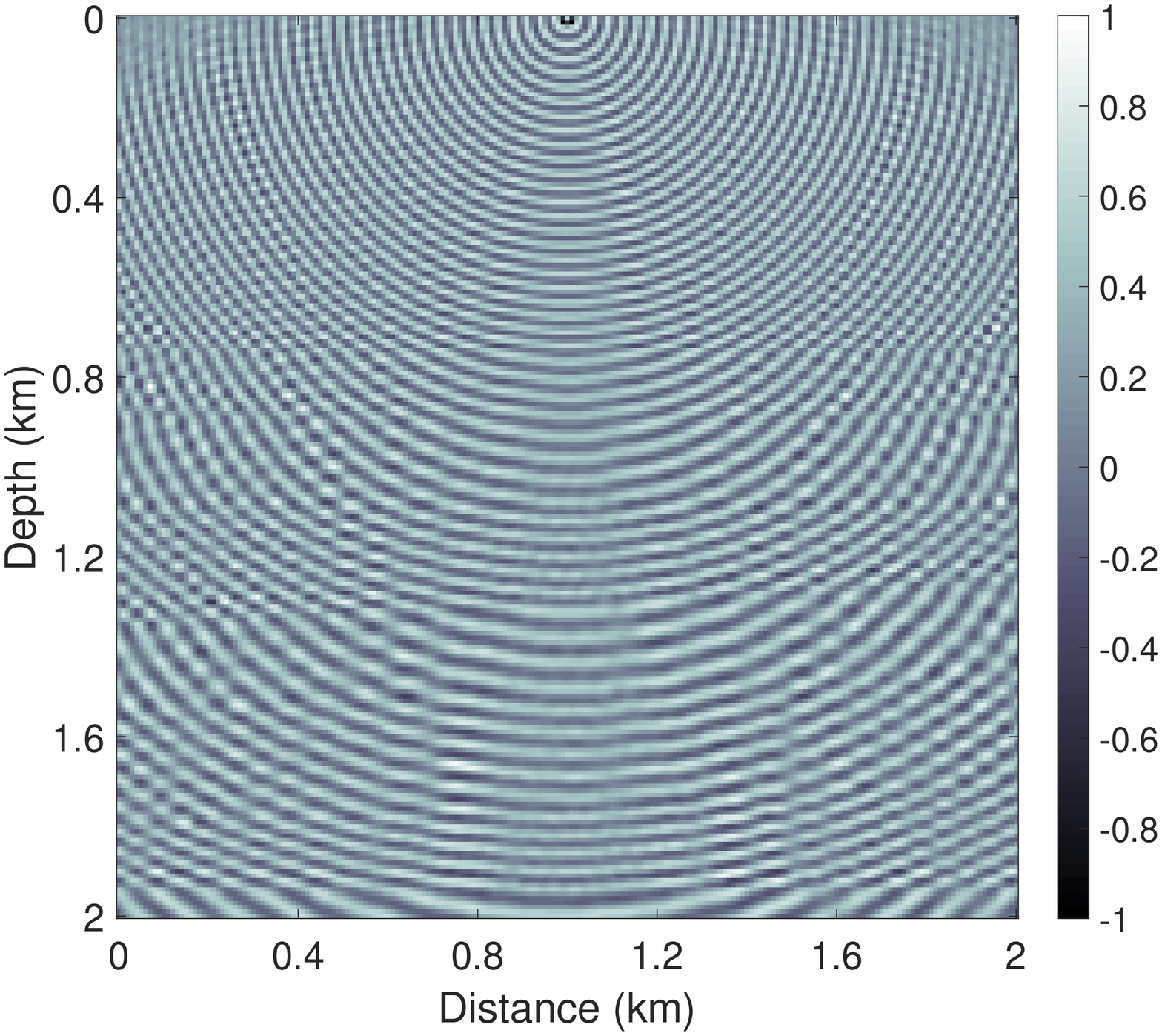}\label{Ex3.fig2b}}
\caption{The monofrequency wavefield (real part) for $f = 62.5$ Hz}\label{Ex3.fig2}
\end{figure}

In addition, snapshots for $t=520$ ms generated by refined PW 17p and refined 17p are available in Figure \ref{Ex3.fig3}. For the snapshot obtained by refined PW 17p (Figure \ref{Ex3.fig3a}), no boundary reflections can be observed, and the upward incident waves, the downward incident waves and transmissive waves are all clear. However, the same cannot be said for the snapshot obtained by refined 17p. As can be seen, we can find unclear parts on the top-left and top-right of the generated snapshot.

\begin{figure}[htp]
\centering
\subfloat[refined PW 17p]{\includegraphics[width=0.45\textwidth]{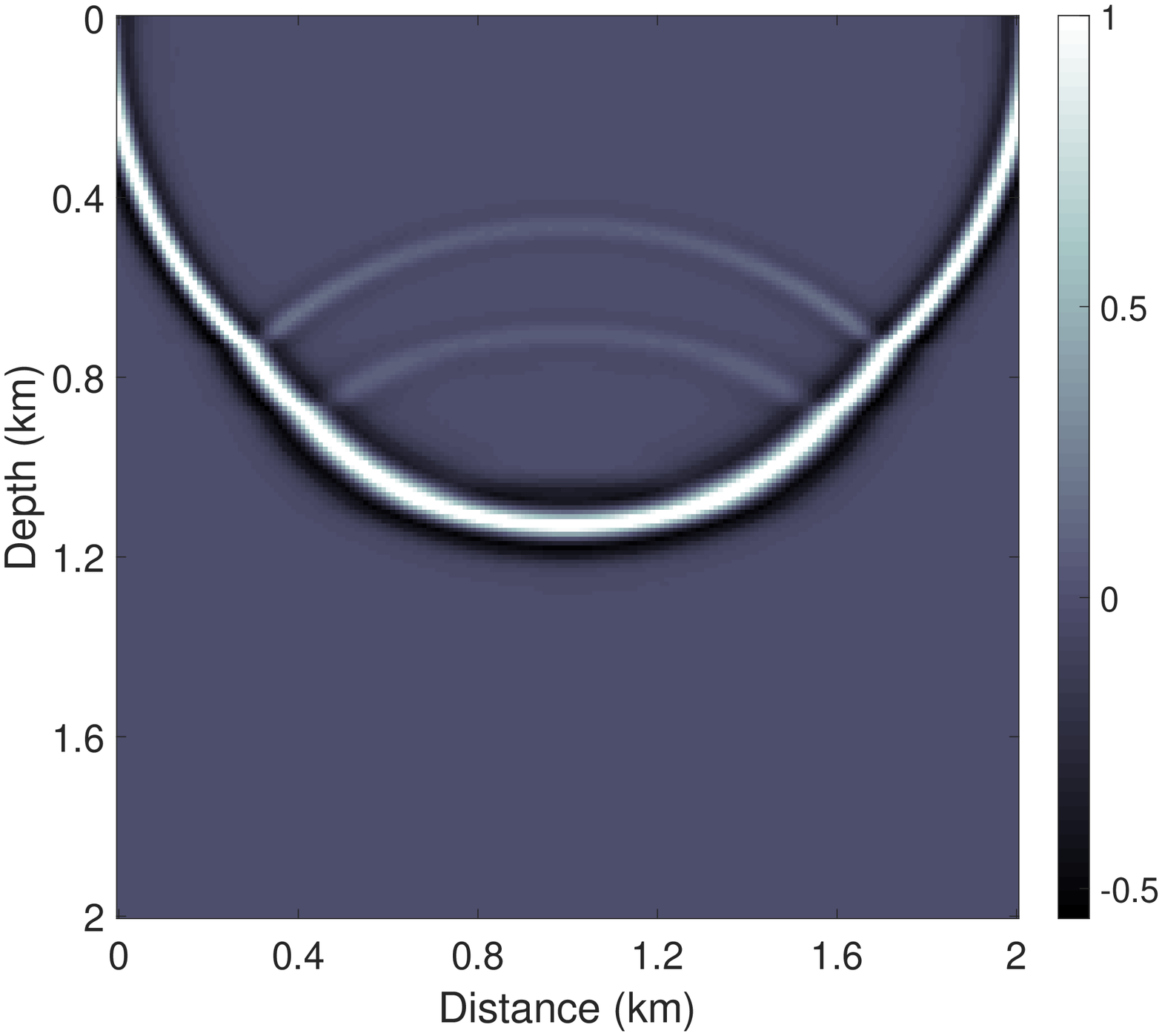}\label{Ex3.fig3a}}
~
\subfloat[refined 17p]{\includegraphics[width=0.45\textwidth]{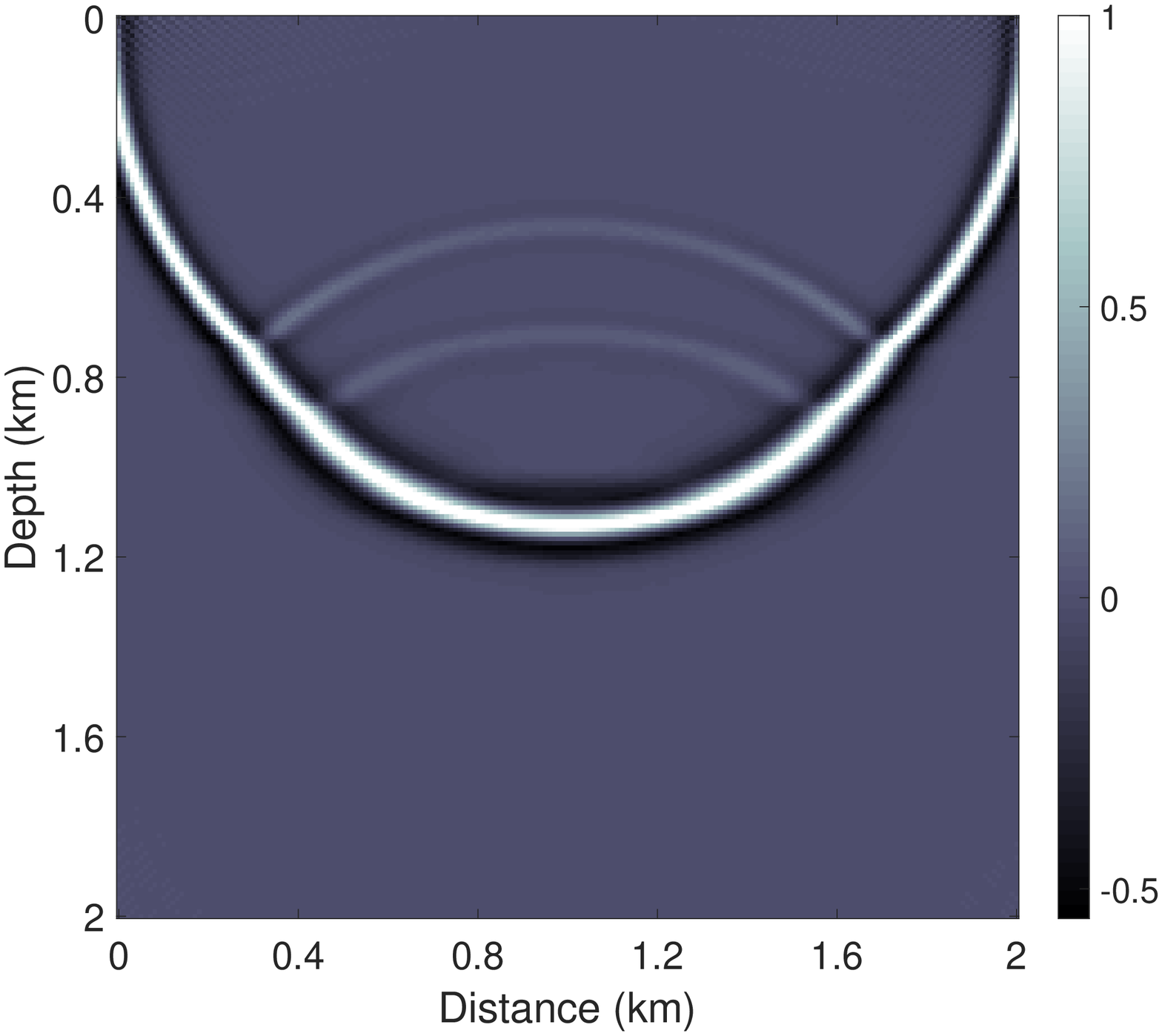}\label{Ex3.fig3b}}
\caption{Snapshots for $t=520$ ms generated by various schemes}\label{Ex3.fig3}
\end{figure}

We have also placed a receiver at $(x_r,z_r) =(500,0)$ (See Figure \ref{Ex3.fig1}) and plotted the corresponding numerical solution computed by refined PW 17p and refined 17p in Figure \ref{Ex3.fig4}. Consider the graph between 0.2s and 0.3s or (0.3s to 0.7s) of Figure \ref{Ex3.fig4b}. We can observe that there is some additional energy there. This additional energy is an artificial effect caused by the numerical solver, refined 17p.

\begin{figure}[htp]
\centering
\subfloat[refined PW 17p]{\includegraphics[width=0.45\textwidth]{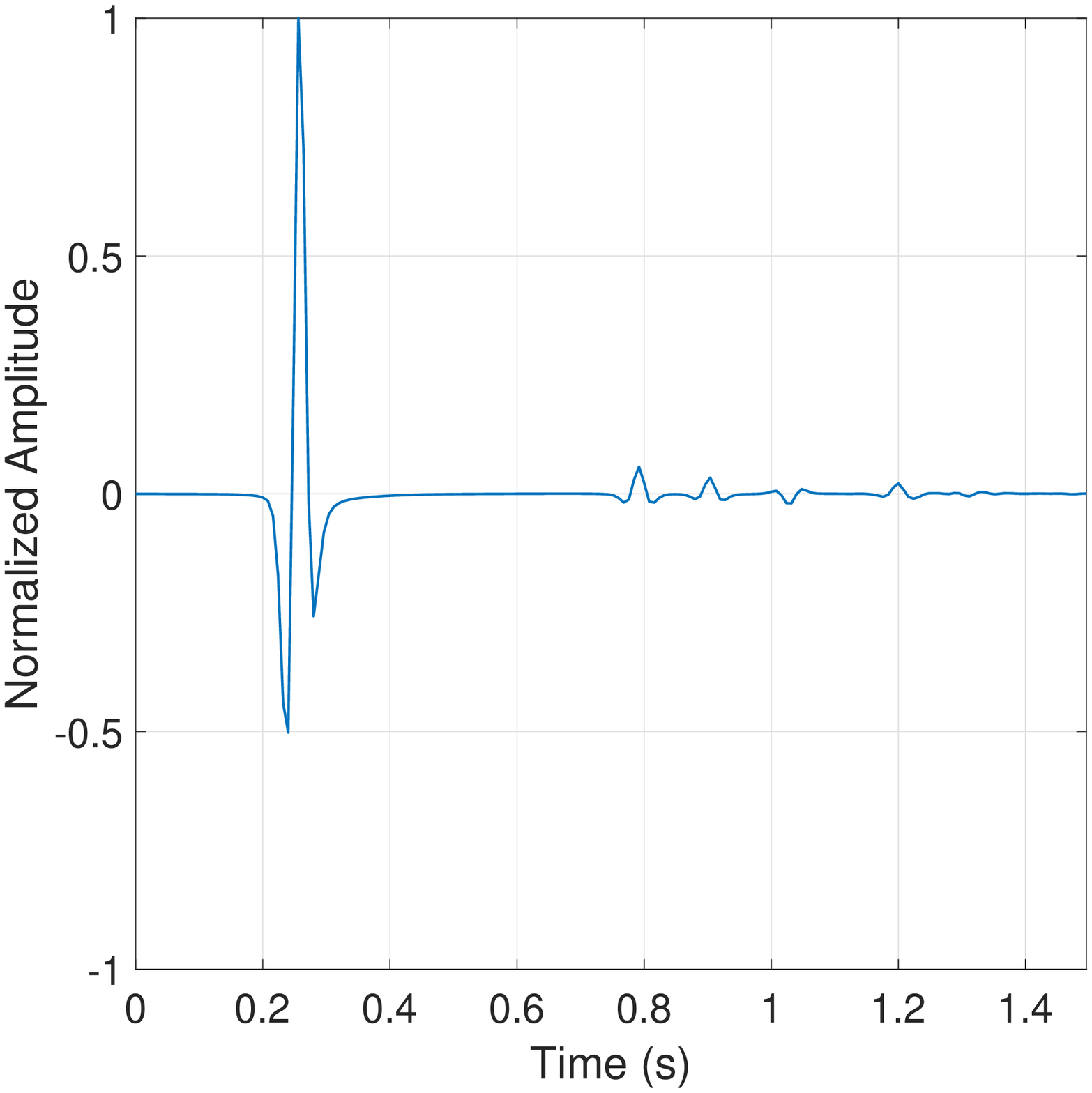}\label{Ex3.fig4a}}
~
\subfloat[refined 17p]{\includegraphics[width=0.45\textwidth]{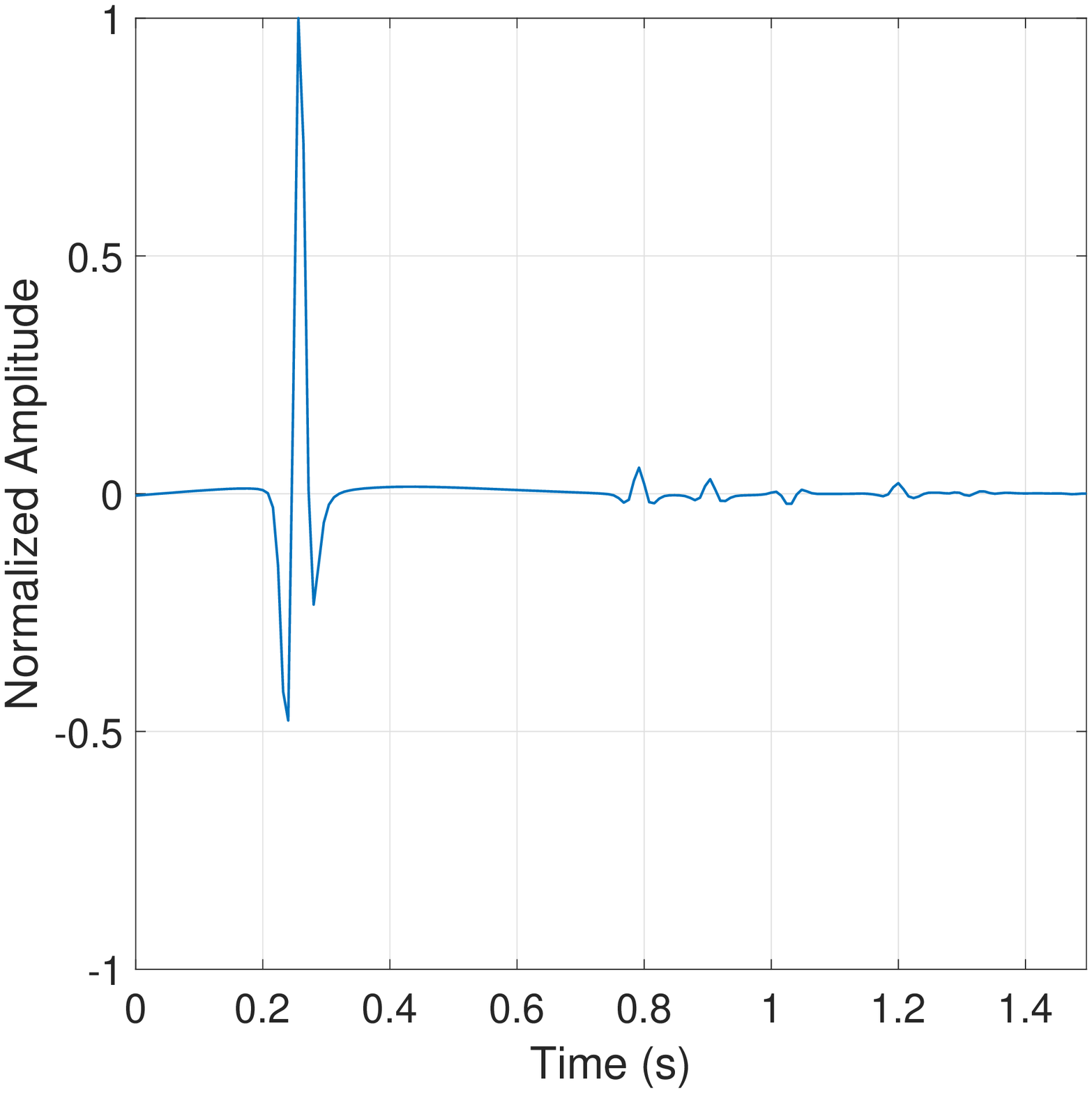}\label{Ex3.fig4b}}
\caption{The numerical solution by various schemes}\label{Ex3.fig4}
\end{figure}

From \cite{dastour2019}, we know that refined 17p is inconsistent with the Helmholtz equation with PML. In some examples, it is hard to observe this inconsistency. However, in some examples, when we deal with large wavenumbers, this inconsistency can be noticeable. In this example, we have shown that refined PW 17p can replace the need for refined 17p in practical applications.

\section{Conclusions}\label{S4}
In this article, we developed a general approach for constructing fourth-order finite difference schemes for the Helmholtz equation with PML using point-weighting strategy. Particularly, we developed two optimal finite difference methods, the optimal point-weighting 25-point finite difference and optimal point-weighting 17-point finite difference methods. We proved that the two new methods are indeed 4th-order and consistent with the Helmholtz equation with PML when different spacial increment along x-axis and z-axis are used. Two algorithms for parameter selection of these optimal methods presented based on minimization of numerical dispersion of the two finite difference schemes. Normalized phase and group velocity curves for the two schemes have shown significant level of reduction in numerical dispersion. Our numerical examples confirmed the theoretical results and demonstrated the necessity of using consistent finite different schemes. These examples also compared the accuracy and efficiency of the new schemes with a number of existing finite difference schemes that are widely used for the Helmholtz equation with PML. Our numerical results also confirmed that optimal point-weighting 17-point is a consistent fourth-order numerical solver for the Helmholtz equation with PML.

In the future, we plan to extend the new schemes for solving the Helmholtz equation with PML in three-dimension.

\section*{Acknowledgments}
The work is supported by the Natural Sciences \& Engineering Research Council of Canada (NSERC) through the individual Discovery Grant (RGPIN-2019-04830). The first author is also thankful for the Alberta Innovates Graduate Student Scholarship that he received during his Ph.D. studies.

\bibliographystyle{plain}
\biboptions{sort&compress}
\bibliography{FDFD25pPW.bib}

\begin{thebibliography}{10}
\expandafter\ifx\csname url\endcsname\relax
  \def\url#1{\texttt{#1}}\fi
\expandafter\ifx\csname urlprefix\endcsname\relax\def\urlprefix{URL }\fi
\expandafter\ifx\csname href\endcsname\relax
  \def\href#1#2{#2} \def\path#1{#1}\fi

\bibitem{hustedt2004mixed}
B.~Hustedt, S.~Operto, J.~Virieux, Mixed-grid and staggered-grid
  finite-difference methods for frequency-domain acoustic wave modelling,
  Geophysical Journal International 157~(3) (2004) 1269--1296.

\bibitem{wu2017dispersion}
T.~Wu, A dispersion minimizing compact finite difference scheme for the 2d
  helmholtz equation, Journal of Computational and Applied Mathematics 311
  (2017) 497--512.

\bibitem{ihlenburg1995dispersion}
F.~Ihlenburg, I.~Babu{\v{s}}ka, Dispersion analysis and error estimation of
  galerkin finite element methods for the helmholtz equation, International
  journal for numerical methods in engineering 38~(22) (1995) 3745--3774.

\bibitem{ihlenburg1995finite}
F.~Ihlenburg, I.~Babu{\v{s}}ka, Finite element solution of the helmholtz
  equation with high wave number part i: The h-version of the fem, Computers \&
  Mathematics with Applications 30~(9) (1995) 9--37.

\bibitem{jo1996optimal}
C.-H. Jo, C.~Shin, J.~H. Suh, An optimal 9-point, finite-difference,
  frequency-space, 2-d scalar wave extrapolator, Geophysics 61~(2) (1996)
  529--537.

\bibitem{chen2013optimal}
Z.~Chen, D.~Cheng, W.~Feng, T.~Wu, An optimal 9-point finite difference scheme
  for the helmholtz equation with pml., International Journal of Numerical
  Analysis \& Modeling 10~(2) (2013).

\bibitem{shin1998frequency}
C.~Shin, H.~Sohn, A frequency-space 2-d scalar wave extrapolator using extended
  25-point finite-difference operator, Geophysics 63~(1) (1998) 289--296.

\bibitem{chen2012dispersion}
Z.~Chen, D.~Cheng, T.~Wu, A dispersion minimizing finite difference scheme and
  preconditioned solver for the 3d helmholtz equation, Journal of Computational
  Physics 231~(24) (2012) 8152--8175.

\bibitem{pratt1990inverse}
R.~G. Pratt, M.~Worthington, Inverse theory applied to multi-source cross-hole
  tomography. part 1: Acoustic wave-equation method, Geophysical prospecting
  38~(3) (1990) 287--310.

\bibitem{de1990matrix}
P.~M. De~Zeeuw, Matrix-dependent prolongations and restrictions in a blackbox
  multigrid solver, Journal of computational and applied mathematics 33~(1)
  (1990) 1--27.

\bibitem{erlangga2006novel}
Y.~A. Erlangga, C.~W. Oosterlee, C.~Vuik, A novel multigrid based
  preconditioner for heterogeneous helmholtz problems, SIAM Journal on
  Scientific Computing 27~(4) (2006) 1471--1492.

\bibitem{van2007spectral}
M.~B. van Gijzen, Y.~A. Erlangga, C.~Vuik, Spectral analysis of the discrete
  helmholtz operator preconditioned with a shifted laplacian, SIAM Journal on
  Scientific Computing 29~(5) (2007) 1942--1958.

\bibitem{chen2013generalized}
J.-B. Chen, A generalized optimal 9-point scheme for frequency-domain scalar
  wave equation, Journal of Applied Geophysics 92 (2013) 1--7.

\bibitem{collino1998optimizing}
F.~Collino, P.~B. Monk, Optimizing the perfectly matched layer, Computer
  methods in applied mechanics and engineering 164~(1-2) (1998) 157--171.

\bibitem{berenger1994perfectly}
J.-P. Berenger, A perfectly matched layer for the absorption of electromagnetic
  waves, Journal of computational physics 114~(2) (1994) 185--200.

\bibitem{turkel1998absorbing}
E.~Turkel, A.~Yefet, Absorbing pml boundary layers for wave-like equations,
  Applied Numerical Mathematics 27~(4) (1998) 533--557.

\bibitem{singer2004perfectly}
I.~Singer, E.~Turkel, A perfectly matched layer for the helmholtz equation in a
  semi-infinite strip, Journal of Computational Physics 201~(2) (2004)
  439--465.

\bibitem{cheng2017dispersion}
D.~Cheng, X.~Tan, T.~Zeng, A dispersion minimizing finite difference scheme for
  the helmholtz equation based on point-weighting, Computers \& Mathematics
  with Applications 73~(11) (2017) 2345--2359.

\bibitem{harari1995accurate}
I.~Harari, E.~Turkel, Accurate finite difference methods for time-harmonic wave
  propagation, Journal of Computational Physics 119~(2) (1995) 252--270.

\bibitem{singer1998high}
I.~Singer, E.~Turkel, High-order finite difference methods for the helmholtz
  equation, Computer Methods in Applied Mechanics and Engineering 163~(1-4)
  (1998) 343--358.

\bibitem{britt2011numerical}
S.~Britt, S.~Tsynkov, E.~Turkel, Numerical simulation of time-harmonic waves in
  inhomogeneous media using compact high order schemes, Communications in
  Computational Physics 9~(3) (2011) 520--541.

\bibitem{sutmann2007compact}
G.~Sutmann, Compact finite difference schemes of sixth order for the helmholtz
  equation, Journal of Computational and Applied Mathematics 203~(1) (2007)
  15--31.

\bibitem{wu2018optimal}
T.~Wu, R.~Xu, An optimal compact sixth-order finite difference scheme for the
  helmholtz equation, Computers \& Mathematics with Applications 75~(7) (2018)
  2520--2537.

\bibitem{turkel2013compact}
E.~Turkel, D.~Gordon, R.~Gordon, S.~Tsynkov, Compact 2d and 3d sixth order
  schemes for the helmholtz equation with variable wave number, Journal of
  Computational Physics 232~(1) (2013) 272--287.

\bibitem{dastour2019}
H.~Dastour, W.~Liao, A fourth-order optimal finite difference scheme for the
  helmholtz equation with pml, Computers \& Mathematics with Applications
  6~(78) (2009) 2147--2165.

\bibitem{zeng2001application}
Y.~Zeng, J.~He, Q.~Liu, The application of the perfectly matched layer in
  numerical modeling of wave propagation in poroelastic media, Geophysics
  66~(4) (2001) 1258--1266.

\bibitem{trefethen1982group}
L.~N. Trefethen, Group velocity in finite difference schemes, SIAM review
  24~(2) (1982) 113--136.

\bibitem{davis1997unsymmetric}
T.~A. Davis, I.~S. Duff, An unsymmetric-pattern multifrontal method for sparse
  lu factorization, SIAM Journal on Matrix Analysis and Applications 18~(1)
  (1997) 140--158.

\bibitem{davis2004algorithm}
T.~A. Davis, Algorithm 832: Umfpack v4. 3---an unsymmetric-pattern multifrontal
  method, ACM Transactions on Mathematical Software (TOMS) 30~(2) (2004)
  196--199.

\end{thebibliography}
\end{document}